\newtheorem{theorem}{Theorem}
\newtheorem{lemma}[theorem]{Lemma}
\newtheorem{cor}[theorem]{Corollary}
\newtheorem{prop}[theorem]{Proposition}
\theoremstyle{definition}
\newtheorem{claim}{Claim}
\def\subqed{{\unskip\nobreak\hfil\penalty50
	\hskip1em\hbox{}\nobreak\hfil\raise2pt\hbox{\vrule height4pt width4pt depth0pt}
	\parfillskip=0pt \finalhyphendemerits=0 \par}}
\newcommand{\R}{\mathbb{R}}
\newcommand{\N}{\mathbb{N}}
\newcommand{\vx}{\mathbf{x}}
\newcommand{\vy}{\mathbf{y}}
\newcommand{\restrict}{\upharpoonright}
\newcommand{\concat}{\smallfrown}
\newcommand{\piset}[2]{\mathbf\Pi_{#1}^{#2}}
\newcommand{\sigmaset}[2]{\mathbf\Sigma_{#1}^{#2}}
\newcommand{\set}[2]{\left\{#1\;|\;#2\right\}}
\newcommand{\bd}[1]{\partial {#1}}
\newcommand{\cl}[1]{\overline{#1}}
\newcommand{\hideme}[1]{}
\begin{document}





\title{Strong Arcwise Connectedness}

\date{}

\begin{abstract}
A space is $n$-strong arc connected ($n$-sac) if for any $n$ points in the space there is an arc in the space visiting them in order. A space is $\omega$-strong arc connected ($\omega$-sac) if it is $n$-sac for all $n$.  We study these properties in finite graphs, regular continua, and rational continua. There are no $4$-sac graphs, but there are $3$-sac graphs and graphs which are $2$-sac but not $3$-sac. For every $n$ there is an $n$-sac regular continuum, but no regular continuum is $\omega$-sac. There is an $\omega$-sac rational continuum. For graphs we give a simple characterization of those graphs which are $3$-sac. It is shown, using ideas from descriptive set theory,  that there is no simple characterization of $n$-sac, or $\omega$-sac, rational continua.

Primary 54F15, 54H05; Secondary 54F50.

Analytic set, arc connected, cyclicly connected, ill founded trees, $n$-arc connected, $n$-strong arc connected, 
rational curve, regular curve, wadge reducible, $\mathbf{\Pi}_1^1$-complete, $\mathbf{\Pi}_2^1$-complete,
$\mathbf{\Sigma}_1^1$-complete, $\omega$-strong arc connected.
\end{abstract}

\maketitle

\section*{Introduction}

In \cite{ac} the property of being {\it $n$-arc connected} was introduced --- a topological space is $n$-arc connected if given any $n$ points in the space, there is an arc in the space containing the points. In this paper we strengthen the condition `there is an arc containing the points' by requiring the arc to traverse the points in a given order. We call this property {\it $n$-strong arc connectedness} (abbreviated $n$-sac), and we call a space which is $n$-sac for all $n$ an $\omega$-strongly arc connected space ($\omega$-sac). 

Evidently a space is $2$-sac if and only if it is arc connected. Many naturally occurring examples of arc connected spaces, especially those of dimension at least two, are $\omega$-sac. For instance it is easy to see that manifolds of dimension at least $3$, with or without boundary, and all $2$-manifolds without boundary, are $\omega$-sac. But note that the closed disk is $3$-sac but not $4$-sac (there is no arc connecting the four cardinal points in the order \emph{North}, \emph{South}, \emph{East} and then \emph{West}).
We are lead, then, to focus on one-dimensional spaces, and in particular on \emph{curves:} one-dimensional continua (compact, connected metric spaces). 

To further hone our focus, we observe that there is a natural obstruction to spaces being $3$-sac. Suppose a space $X$ contains an arc-cut point $x_1$ (in other words, $X \setminus \{x_1\}$ is not arc connected), and fix points $x_2$ and $x_3$ for which there is no arc in $X \setminus \{x_1\}$ from $x_2$ to $x_3$. Then 
no arc in $X$ visits the points $x_1, x_2, x_3$ in the given order, and thus $X$ is not $3$-sac. More generally, see Lemma~\ref{finite_cut_not_4sac}, if removing some $n-2$ points from a space renders it arc disconnected, then it is not $n$-sac. A continuum is said to be \emph{regular} if it has a base all of whose elements have a finite boundary, and is \emph{rational} if it has a base all of whose elements have a countable boundary. It is well known that all rational continua are curves. From our observation it would seem that regular curves could only `barely' be $n$-sac for $n \ge 3$, while rational curves could only `barely' be $\omega$-sac --- if, indeed, such spaces exist at all.

This paper investigates the $n$-sac and $\omega$-sac properties in graphs, regular curves, and rational curves.
The paper is divided into five sections, in the first section we formally introduce $n$-strong arc connectedness, give restrictions on spaces being $4$-sac, or more generally $n$-sac. In particular we show that no planar continuum is 
$4$-sac. In Section~2 we study $n$-strong arc connectedness in graphs noting that graphs are never $4$-sac, and giving a simple (in a precisely defined sense) characterization of those graphs which are $3$-sac. In Section~3 we observe that regular curves are never $\omega$-sac, but that there exist, for every $n$, a regular curve which is $n$-sac but not $(n+1)$-sac. While in Section~4 we construct a locally connected $\omega$-sac rational curve. In contrast to the case with graphs, we have not been able to find a simple characterization of regular $n$-sac curves. However, in the last section we study the complexity of the set of rational $n$-strongly arc connected continua as a subset of the space of all subcontinua of $\mathbb{R}^N$, for $N\geq 3$, and deduce that --- \emph{provably} --- there is no simple characterization of rational $n$-sac, or $\omega$-sac, curves. Further, we prove that there is no characterization of $n$-sac or $\omega$-sac curves (not necessarily rational) less complex than the definition itself. We introduce the machinery from descriptive set theory to make these claims precise, and to prove them, at the start of Section~5. The paper concludes with a discussion of open problems.

\section{Preliminaries}

In this section we introduce the basic definitions and notation used throughout the paper. Most of the basic notions are taken from \cite{nad}.

A topological space $X$ is {\it $n$-strongly arc connected} ($n$-sac) if for every distinct $x_1, \ldots , x_n$ in $X$ there is an arc 
$\alpha : [0,1] \to X$ and $t_1< t_2 < \cdots < t_n$ from $[0,1]$ such that $\alpha(t_i)=x_i$ for $i=1, \ldots , n$  --- in other words, 
the arc $\alpha$ `visits' the points in order. Note that we can assume that $t_1=0$ and $t_n=1$, or even that $t_i=(i-1)/(n-1)$ for $i=1, \ldots , n$.
A topological space is called $\omega$-sac if it is $n$-sac for every $n$. 

In connection with $n$-arc connectedness, observe that $n$-strong arc connectedness implies $n$-arc connectedness. On the other hand, a simple closed
curve is $\omega$-arc connected but is not $4$-strongly arc connected, thus the class of $n$-strongly arc connected spaces is a proper subclass of $n$-arc
connected spaces.

\begin{lemma}\label{finite_cut_not_4sac}  Let $X$ be a topological space.
If there is a finite $F$ such that $X \setminus F$ is disconnected, then $X$ is not $(|F|+2)$-sac.
\end{lemma}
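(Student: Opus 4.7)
The plan is to exploit the ordering of the arc to force a specific subarc to avoid $F$. Write $F = \{f_1, \ldots, f_n\}$ with $n = |F|$. Since $X \setminus F$ is disconnected, fix a separation $X \setminus F = U \sqcup V$ with $U, V$ nonempty open (in $X \setminus F$), and choose $x \in U$, $y \in V$. I want to produce $n+2$ distinct points and an ordering of them for which no arc in $X$ can realize that ordering.

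The key choice is the ordering: I list the $n+2$ points as $x,\ y,\ f_1,\ f_2,\ \ldots,\ f_n$. These are distinct because $x, y \notin F$. Now suppose, for contradiction, that $X$ is $(n+2)$-sac, and let $\alpha \colon [0,1] \to X$ be an arc with parameters $0 = t_1 < t_2 < \cdots < t_{n+2} = 1$ such that $\alpha(t_1)=x$, $\alpha(t_2)=y$, and $\alpha(t_{i+2})=f_i$ for $i=1,\ldots,n$. Consider the restricted subarc $\beta = \alpha\restrict [t_1,t_2]$.

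The image of $\beta$ is connected and contains both $x$ and $y$. Because $\alpha$ is injective and every $f_i$ occurs at some parameter $t_{i+2} > t_2$, none of the points of $F$ lies in the image of $\beta$. Hence the image of $\beta$ is a connected subset of $X \setminus F$ meeting both $U$ and $V$, contradicting the fact that $U \sqcup V$ is a separation of $X \setminus F$. This contradiction shows no such arc can exist, so $X$ is not $(n+2)$-sac.

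There is essentially no obstacle here: the only conceptual move is choosing the ordering that puts the two separated points $x, y$ first, so that the arc segment linking them is forced to avoid the later-visited cut set $F$. The argument uses only the general topological definition of an arc (a homeomorphic image of $[0,1]$, hence injective and connected) and the definition of disconnectedness, so it works in arbitrary topological spaces without any regularity or metric hypotheses on $X$.
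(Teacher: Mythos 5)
Your proof is correct and uses essentially the same idea as the paper: force the subarc joining the two separated points to avoid $F$ (via injectivity of the arc), contradicting the separation of $X\setminus F$. The only cosmetic difference is that you list $F$ after the pair $x,y$ and use the initial segment, whereas the paper lists $F$ first and uses the final segment; your version also handles $F=\emptyset$ uniformly rather than as a separate case.
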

\begin{proof}
If $F$ is empty then $X$ is disconnected and hence not $2$-sac. So suppose $F=\{x_1, \ldots, x_n\}$ for $n\ge 1$. Let $U$ and $V$ be an open partition of $X \setminus F$. Pick $x_{n+1}$ in $U$ and $x_{n+2}$ in $V$. Consider an arc $\alpha$  in $X$ visiting $x_1, \ldots , x_n,$ and then  $x_{n+1}$. Then $\alpha$ ends in $U$ and can not enter $V$ without passing through $F$. Thus no arc extending $\alpha$ can end at $x_{n+2}$ --- and $X$ is not $n+2$-sac, as claimed.
\end{proof}

\begin{cor}\label{finite_cut_cor} Let $X$ be a topological space.

(1) If there is an open non-dense set $U$ with  finite boundary, then $X$ is not $(|\bd{U}|+2)$-sac.

(2) A continuum containing a free arc is not $4$-sac.

(3) No compact continuous injective image of an interval is $4$-sac. 
\end{cor}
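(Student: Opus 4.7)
The plan is to handle the three parts in sequence, with each building on the previous, and Lemma~\ref{finite_cut_not_4sac} serving as the central tool throughout.

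For part~(1), the strategy is to apply Lemma~\ref{finite_cut_not_4sac} with $F = \bd{U}$. The key observation is the decomposition
\[
X \setminus \bd{U} \;=\; U \;\cup\; (X \setminus \cl{U}),
\]
where the two open sets on the right are disjoint. Both are nonempty --- $U$ by hypothesis (the case $U = \emptyset$ being uninteresting), and $X \setminus \cl{U}$ precisely because $U$ is non-dense. Hence $X \setminus \bd{U}$ is disconnected, and the lemma yields the conclusion.

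For part~(2), I would reduce to (1) by producing an open non-dense $U$ with $|\bd{U}| = 2$ carved out of the free arc. Fix a parametrization $\alpha : [0,1] \to A \subseteq X$ with $\alpha((0,1))$ open in $X$, and take $U = \alpha((1/3, 2/3))$. As a subset of the open set $\alpha((0,1))$, $U$ is open in $X$; its closure in $X$ is $\alpha([1/3, 2/3])$ (compact, hence closed), so $\bd{U} = \{\alpha(1/3), \alpha(2/3)\}$; and $U$ is non-dense because $\alpha(0)$, by injectivity of $\alpha$, is not in $\cl{U}$. Part~(1) applied with $|\bd{U}| = 2$ completes the argument.

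For part~(3), I would observe that a compact continuous injective image of an interval is an arc --- for a compact interval this is the standard fact that a continuous bijection from a compact space onto a Hausdorff space is a homeomorphism. But an arc is itself a continuum containing a free arc, namely itself, so (2) applies.

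The only real subtlety lies in part~(2): the construction must produce a non-dense set even when $X$ itself equals the free arc. Taking the middle third $\alpha((1/3, 2/3))$ rather than the whole interior $\alpha((0,1))$ is the trick --- injectivity of $\alpha$ keeps $\alpha(0)$ outside $\cl{U}$ regardless of whether $X$ strictly contains $A$, so the same computation works uniformly in both cases.
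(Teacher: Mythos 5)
Your parts (1) and (2) are correct and are essentially the paper's own argument, just written out in more detail: (1) is Lemma~\ref{finite_cut_not_4sac} applied to the open partition $X \setminus \bd{U} = U \cup (X \setminus \cl{U})$, and in (2) your middle-third choice is exactly the right way to guarantee non-density even when the continuum equals the free arc.

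Part (3), however, has a genuine gap: you have read ``compact continuous injective image of an interval'' as though the \emph{interval} were compact, but the adjective modifies the \emph{image}. The nontrivial case --- and the only reason the statement is worth stating, since an arc is not even $3$-sac --- is a continuous injection $f$ of a \emph{noncompact} interval, say $[0,\infty)$, whose image happens to be compact. Such an image need not be an arc: for example, map a half-line injectively so that it first runs along a segment and then spirals in on that segment (or first traverses the limit segment of the topologist's sine curve and then traverses the sine curve itself); the image is compact and connected but not locally connected, and your appeal to ``a continuous bijection from a compact space onto a Hausdorff space is a homeomorphism'' is unavailable because the domain is not compact. The missing idea, which the paper uses, is Baire category: writing the interval as an increasing union of compact subintervals $I_n$, the image $K=f(I)$ is the countable union of the arcs $f(I_n)$; since $K$ is compact it is a Baire space, so some $f(I_n)$ has nonempty interior in $K$, and a suitable subarc of $f(I_n)$ inside that interior is then a free arc of $K$, at which point (2) applies. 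The fact that your reading makes (3) trivial, while the paper's proof invokes Baire category, is the signal that the intended statement is the stronger one.
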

\begin{proof}
(1) is simply a restatement of Lemma~\ref{finite_cut_not_4sac}. 
For (2), apply (1) to an open interval inside the free arc. 
While for (3) note that, by Baire Category, a compact continuous injective image of an interval contains a free arc, so apply (2).
\end{proof}

Call an arc $\alpha$ in a space $X$ a `{\it no exit arc}' if every arc $\beta$ containing the endpoints of $\alpha$, and meeting $\alpha$'s interior must contain all of $\alpha$.
\begin{lemma}\label{no_exit_arc_not_4sac}
If a space contains a no exit arc then it is not $4$-sac.
\end{lemma}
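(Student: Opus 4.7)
The plan is to force a contradiction by choosing four points on the no exit arc in a deliberately ``scrambled'' order, so that any arc realizing that order must be the no exit arc itself, yet must then visit its interior points in the wrong order.

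First I would let $\alpha$ be a no exit arc in $X$ with endpoints $a,b$, and pick two distinct interior points $p,q$ of $\alpha$ ordered so that $p$ lies strictly between $a$ and $q$ along $\alpha$. Assuming $X$ is $4$-sac, I would apply the defining property to the quadruple $(x_1,x_2,x_3,x_4)=(a,q,p,b)$, getting an arc $\beta\colon[0,1]\to X$ with $\beta(0)=a$, $\beta(1)=b$, and $\beta(t_2)=q$, $\beta(t_3)=p$ for some $0<t_2<t_3<1$. The key observation is that this order interchanges $p$ and $q$ relative to their order on $\alpha$.

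Next I would apply the no exit hypothesis: $\beta$ contains both endpoints $a,b$ of $\alpha$ and meets the interior of $\alpha$ (at $p$ and $q$), so $\alpha\subseteq\beta$. Since $\beta$ is homeomorphic to $[0,1]$ via $\beta$ itself, and $\alpha$ is a connected subset of $\beta$ containing both ``endpoints'' $\beta(0)=a$ and $\beta(1)=b$, it must equal all of $\beta$. Thus $\alpha=\beta$ as sets.

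Finally, parametrizing $\alpha$ from $a$ to $b$ so that $p$ appears before $q$, the equality $\alpha=\beta$ yields an order-preserving homeomorphism of $[0,1]$ (preserving endpoints) between the two parametrizations; this forces the order of $p$ and $q$ along $\beta$ to match the order along $\alpha$, i.e.\ $p$ strictly before $q$, contradicting $t_2<t_3$. This contradicts $X$ being $4$-sac, completing the proof.

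There is not much of an obstacle here: the only delicate point is the uniqueness-of-parametrization step, which is immediate from the fact that an arc from $a$ to $b$ contained in another arc from $a$ to $b$ must coincide with it, together with the fact that any two endpoint-preserving homeomorphisms $[0,1]\to[0,1]$ are order-preserving. The real content is picking the permutation $(a,q,p,b)$, which is what makes the no exit property bite.
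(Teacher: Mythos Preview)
Your proof is correct and follows essentially the same strategy as the paper: choose the two endpoints of the no exit arc together with two interior points in a scrambled order, invoke the no exit property to force $\alpha\subseteq\beta$, and derive an order contradiction. The only cosmetic difference is the particular quadruple---the paper uses $(x_1,x_2,x_3,x_4)$ with the endpoints listed first and second, while you use $(a,q,p,b)$ with the endpoints at the extremes; your choice has the minor advantage that $\alpha=\beta$ follows immediately (since $\alpha$ is a connected subset of $\beta$ containing both of $\beta$'s endpoints), making the final order argument one line.
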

\begin{proof}
Let $x_1$ and $x_2$ be the endpoints of $\alpha$. Pick $x_3$ and $x_4$ so that $x_1, x_3, x_4, x_2$  are in order along $\alpha$. Suppose, for a contradiction, $\beta$ is an arc visiting the $x_i$ in order. Since $x_3$ and $x_4$ are in the interior of $\alpha$, by hypothesis, $\beta$ contains $\alpha$. Now we see that if $\beta$ enters the interior of $\alpha$ from $x_1$ then it visits $x_3$ before $x_2$. While if $\beta$ enters the interior of $\alpha$ from $x_2$ it visits $x_4$ before $x_3$. Either case leads to a contradiction.
\end{proof}

\begin{prop}\label{planar_ctm_not_4sac}
No planar continuum is $4$-sac.
\end{prop}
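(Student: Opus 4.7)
The plan is to argue by contradiction: suppose $X\subseteq\mathbb{R}^2$ is a planar continuum that is $4$-sac. Being $4$-sac implies $2$-sac, so $X$ is arc-connected and non-degenerate; by Corollary \ref{finite_cut_cor}(3) it is not itself an arc.

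The central strategy is to find a ``theta'' subcontinuum in $X$: three internally disjoint arcs $A_1,A_2,A_3$ in $X$ with common endpoints $p$ and $q$. If $X$ contains no simple closed curve, I would produce instead a triod (three arcs meeting at a single common point $p$), and observe that $p$ together with one interior point from each arm already defeats $4$-sac, since any arc visiting those four points in an order beginning with $p$ would have to leave $p$ twice. Otherwise $X$ contains a simple closed curve $J$; since $X\neq J$ (no simple closed curve is $4$-sac, as remarked in the introduction), there is a point $r\in X\setminus J$, and shortening an arc in $X$ from $r$ into $J$ yields a third arc joining two points of $J$ internally disjoint from $J$, producing the desired theta $\Theta$.

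Given $\Theta=A_1\cup A_2\cup A_3$, I would invoke the Jordan curve theorem to pin down its planar embedding: up to relabelling, $A_2$ is the \emph{middle} arc, meaning that in $\mathbb{R}^2\setminus\{p,q\}$ it separates the interior of $A_1$ from the interior of $A_3$. Picking interior points $a_i\in A_i$ and considering the four-tuple $p,a_1,a_2,a_3$ in the order $p,a_1,a_2,a_3$, any realizing arc in $X$ must, after leaving $p$, travel successively from $A_1$ to $A_2$ and then from $A_2$ to $A_3$, consuming two bridge transitions through the vertex set $\{p,q\}$. But $p$ is already used at the start, leaving only $q$ available as a bridge, and $q$ cannot serve as an internal bridge twice in a simple arc. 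Planarity is then used to exclude detours through $X\setminus\Theta$: by choosing $\Theta$ carefully---for instance by taking $J$ to bound a bounded complementary component of $\mathbb{R}^2\setminus X$ and attaching the third arc in the unbounded face of $J$---one confines $X\setminus\Theta$ to a single face of $\Theta$ in $\mathbb{R}^2$, so any detour can re-enter $\Theta$ only along two of the $A_i$ without providing the missing bridge.

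The main technical obstacle is this last detour analysis: controlling where components of $X\setminus\Theta$ can sit within the three planar faces of $\Theta$, and ensuring this works even when $X$ is not locally connected (so that a ``theta'' cannot be localized as cleanly as for a finite graph), is where the argument has to do real work. Once the theta is chosen so that detours cannot generate a third bridge, the bridge-counting obstruction above becomes airtight and yields the contradiction.
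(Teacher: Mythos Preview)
Your proposal has a genuine gap, and you essentially flag it yourself: the detour analysis is not carried out, and the suggested fix does not work in general.

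First, a smaller issue: your construction of a theta is not correct as stated. An arc from $r\in X\setminus J$ into $J$, when shortened, meets $J$ in a \emph{single} point, producing a lollipop, not a theta. You would need something like cyclic connectedness (Proposition~\ref{char_3sac}) to get two internally disjoint arcs from $r$ to $J$ landing at distinct points. This is fixable, but it is not what you wrote.

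The serious problem is the detour control. You want to choose $\Theta=A_1\cup A_2\cup A_3$ so that $X\setminus\Theta$ lies in a single complementary face of $\Theta$, and you propose to do this by taking $J$ to bound a bounded complementary component of $\mathbb{R}^2\setminus X$ and attaching $A_3$ in the unbounded face of $J$. But this does not confine $X\setminus\Theta$ to one face: the arc $A_3$ splits the exterior of $J$ into two regions, and $X$ may well meet both. A detour through the unbounded face can then pass directly from $A_1$ to $A_3$ without touching $p$, $q$, or $A_2$, and your bridge-counting collapses. Think of the Sierpi\'nski triangle: every theta you pick has pieces of the continuum in all three of its faces. There is no evident way to choose $\Theta$ to avoid this, and you have not supplied one.

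The paper's proof sidesteps the detour problem entirely by a different, more geometric, idea. Instead of hunting for a theta inside $X$, it chooses four concrete points: two extreme points $\vx_-,\vx_+$ of $X$ in the $x$-coordinate, and two points $\vx_3,\vx_4$ on a single vertical line, chosen extreme in the $y$-coordinate among points of $X$ on that line. The subarc $\beta_1$ of a putative witnessing arc running from $\vx_-$ to $\vx_+$, when extended by horizontal rays, is a Jordan line separating $\vx_3$ from $\vx_4$; the later subarc from $\vx_3$ to $\vx_4$ must then cross this line, but it cannot, since it is disjoint from $\beta_1$ and $K\subseteq[-1,1]\times\mathbb{R}$ is disjoint from the rays. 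The extremality of the chosen points is exactly what makes the separation argument global and kills all detours at once.
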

\begin{proof}
Let $K$ be a plane continuum. If it is not arc connected then it is not $2$-sac, so suppose $K$ is arc connected. Pick $\vx_-$ (respectively, $\vx_+$) in $K$ to have minimal $x$--coordinate (resp., maximal $x$--coordinate). If $\vx_-$ and $\vx_+$ have the same $x$--coordinate, then $X$ is an arc, and so not $3$-sac.

Otherwise, translating the mid point between $\vx_-$ and $\vx_+$ to the origin, shearing in the $y$--coordinate only to move $\vx_-$ and $\vx_+$ onto the $x$--axis, and then scaling, we can assume without loss of generality that $\vx_-=(-1,0)$, $\vx_+=(+1,0)$ and $K \subseteq [-1,1] \times \R$.

There is an arc $\alpha$ in $K$ from $\vx_-$ to $\vx_+$. Some subarc, $\alpha'$, of $\alpha$ meets $\{-1\}\times \R$ and $\{+1\} \times \R$ in just one point (each). If for every $x$ in the interval $(-1,1)$ the vertical line $\{x\} \times \R$ meets $K$ in just one point, then $\alpha'$ is a free arc, and $K$ is not $4$-sac, as claimed.

Otherwise there is an $x_0 \in (-1,1)$ such that there are two distinct points $\vx_3$ and $\vx_4$ in $K \cap (\{x_0\} \times \R)$. We can suppose $\vx_3$ has minimal $y$--coordinate, $y_3$, while $\vx_4$ has maximal $y$--coordinate, $y_4$. Assume, for a contradiction, that there is an arc $\beta$  from $\vx_1=\vx_-$ to $\vx_4$ visiting $\vx_2=\vx_+$ and $\vx_3$ in order. Let $\beta_1$ be the subarc of $\beta$ from $\vx_1$ to $\vx_2$ and $\beta_3$ be the subarc of $\beta$ 
from $\vx_3$ to $\vx_4$. Note that $\beta_1 \cap \beta_3 = \emptyset$, and so $\beta_1$ meets $\{x_0\} \times \R$ only inside $\{x_0\} \times (y_3, y_4)$. Hence the line $L= (-\infty,-1)\times \{0\} \cup \beta_1 \cup (+1, +\infty) \times \{0\}$ splits the plane into two disjoint open sets, $U_3$ containing $\vx_3$, and $U_4$ containing $\vx_4$. However $\beta_3$ is supposed, on the one hand, to be an arc from $\vx_3$ to $\vx_4$, and so must cross $L$, and on the other hand, is forced to be disjoint from each part of $L$: $\beta_1$ (by choice of $\beta$) and both $(-\infty,-1)\times \{0\}$ and $(+1, +\infty) \times \{0\}$ (since $K \subseteq [-1,1] \times \R$) --- contradiction.
\end{proof}

\section{Graphs}

 From Corollary~\ref{finite_cut_cor}~(2) it is immediate that no graph is $4$-sac. Since only connected graphs will be considered, all graphs are $2$-sac. In this section we give a characterization of $3$-sac graphs. In fact, 
we show that for a general continuum $X$ the property of being $3$-sac is equivalent to the intensively studied property of being cyclicly connected (any two points in $X$ lie on a circle).

We begin this section by noticing that the triod and the figure eight continuum are not $3$-sac, while 
the circle and the theta curve continuum are $3$-sac. 
In \cite[Theorem 1]{bl}, Bellamy  and Lum proved:

\begin{theorem}
For a continuum $X$, the following are equivalent:
\begin{enumerate}
\item[(1)] $X$ is cyclicly connected;

\item[(2)] $X$ is arc connected, has no arc-cut point, and has no arc end points.
\end{enumerate}
\label{bell}
\end{theorem}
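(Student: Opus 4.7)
The plan is to prove the two implications separately. The direction $(1) \Rightarrow (2)$ is essentially a direct unpacking of the definition, while $(2) \Rightarrow (1)$ is where the work lies.

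For $(1) \Rightarrow (2)$, suppose $X$ is cyclicly connected. Arc connectedness is immediate, since any circle through two points $x, y$ contains an arc between them. To see there are no arc-cut points, fix $p \in X$ and distinct $x, y \in X \setminus \{p\}$; a circle $C$ through $x$ and $y$ splits into two arcs that meet only at $\{x, y\}$, so at most one of these two arcs contains $p$, and the other is an arc in $X \setminus \{p\}$ from $x$ to $y$. To see that no $x \in X$ is an arc end point, choose any $y \neq x$ and a circle $C$ through $x, y$, then remove a small open subarc $U$ of $C$ with $x \in U$ and $\cl{U}$ disjoint from $y$; the complement $C \setminus U$ is an arc of $X$ containing $x$ in its interior.

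For $(2) \Rightarrow (1)$, assume $X$ is arc-connected and has neither arc-cut points nor arc end points. Given distinct $x, y \in X$, the aim is to exhibit two arcs from $x$ to $y$ meeting only at $\{x, y\}$, so that their union is the required circle through $x$ and $y$. By arc connectedness, start with some arc $\alpha$ from $x$ to $y$. By the no-arc-cut hypothesis, for each interior point $p$ of $\alpha$ there is \emph{some} arc in $X \setminus \{p\}$ from $x$ to $y$; the task is to convert these pointwise avoidances into a single arc globally disjoint from the interior of $\alpha$.

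A natural strategy is a Zorn-type minimality argument on the collection of pairs $(\alpha_1, \alpha_2)$ of arcs from $x$ to $y$, ordered by reverse containment of the intersection $\alpha_1 \cap \alpha_2$. One hopes that a minimal pair must satisfy $\alpha_1 \cap \alpha_2 = \{x, y\}$: if some $p \in \alpha_1 \cap \alpha_2$ were an interior common point, then selecting points $a, b \in \alpha_1$ on either side of $p$ close enough, and replacing the subarc of $\alpha_1$ from $a$ to $b$ by an arc of $X \setminus \{p\}$ from $a$ to $b$, should produce a new pair whose intersection is strictly smaller. The no-arc-end-point condition enters to ensure that truncation points never get stuck in pathological end-point configurations. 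The main obstacle is that a detour through $X \setminus \{p\}$ may meet the other arc $\alpha_2$ in a wild (even Cantor-like) set, so the naive splicing need not actually shrink the intersection. Overcoming this plausibly requires working in the hyperspace $C(X)$ of subcontinua and using compactness to pass to well-behaved limits of detours whose intersection with the existing arc can be controlled; this is where I expect the real work, and presumably the delicate core of the Bellamy--Lum argument, to reside.
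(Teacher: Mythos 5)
First, a point of orientation: the paper does not prove this statement at all --- it is quoted verbatim from Bellamy and Lum \cite{bl} (their Theorem 1), so there is no in-paper argument to compare against; your attempt has to stand on its own as a proof of the Bellamy--Lum theorem. Your direction $(1)\Rightarrow(2)$ is essentially fine, apart from a slip in the arc-end-point argument: you delete an open subarc $U$ of $C$ \emph{containing} $x$, after which $C\setminus U$ does not contain $x$ at all; you want $U$ to be a nondegenerate open subarc with $\cl{U}$ missing $x$ (say a small arc around $y$ or any other point of $C$), so that $C\setminus U$ is an arc with $x$ in its interior.

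The real problem is that $(2)\Rightarrow(1)$ --- the entire content of the theorem --- is not proved. What you give is a proposed Zorn-type scheme together with an explicit admission that its crucial step is open: you do not show that a ``minimal'' pair of arcs exists, nor that at such a pair a common interior point $p$ can be spliced away. Both issues are genuine. For Zorn you would need every chain of pairs (ordered by reverse containment of intersections) to have a lower bound realized by an actual pair of arcs, but limits of arcs in a continuum that is not locally connected need not be arcs (they can be $\sin(1/x)$-like or worse), so minimal pairs need not exist. And even granting a minimal pair, the detour through $X\setminus\{p\}$ supplied by the no-arc-cut-point hypothesis can meet the existing arcs in a complicated (e.g.\ Cantor-like) set, so the naive replacement need not produce two arcs from $x$ to $y$ with strictly smaller intersection --- exactly the obstacle you name and then defer to ``the delicate core of the Bellamy--Lum argument.'' Deferring it concedes the theorem: the statement you are asked to prove \emph{is} that delicate core, and Bellamy and Lum's actual proof does not proceed by this minimal-pair splicing but by a substantially more involved analysis of arcwise connected continua. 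As it stands, the hard implication is a plan plus an acknowledged gap, not a proof.
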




Using the previous theorem we obtain the following characterization of $3$-sac continua.

\begin{prop}\label{char_3sac}
For a continuum $X$, the following are equivalent:
\begin{enumerate}
\item[(1)] $X$ is cyclicly connected;

\item[(2)] $X$ is 3--sac;

\item[(3)] Any three points in $X$ lie either on a circle or on a theta curve.
\end{enumerate}
\end{prop}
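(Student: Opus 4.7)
The plan is to prove the implications in the cycle (3) $\Rightarrow$ (2) $\Rightarrow$ (1) $\Rightarrow$ (3), using Theorem~\ref{bell} for the equivalence of (1) and (2) and doing direct geometric work for (1) $\Rightarrow$ (3).

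For (3) $\Rightarrow$ (2), I would observe that a circle and a theta curve are each themselves 3-sac. Three points on a circle partition it into three subarcs; removing any one of those subarcs leaves an arc visiting the remaining two points in either order, realising all six orderings. For three points on a theta curve with branch vertices $u,v$ and edges $A_1,A_2,A_3$, a short case analysis on how many distinct edges contain the three points (reducing to the circle case via $A_i \cup A_j$ when two points share an edge, and otherwise concretely building an arc that traverses portions of each edge passing through $u$ or $v$) yields an arc visiting them in any prescribed order. Thus if three points of $X$ lie in such a subcontinuum the desired 3-sac arc already exists inside it.

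For (2) $\Rightarrow$ (1), I would verify the three conditions of Theorem~\ref{bell}. Arc-connectedness is immediate since 3-sac implies 2-sac. If $p$ were an arc-cut point, choose $x_2,x_3$ in distinct arc components of $X \setminus \{p\}$; a 3-sac arc visiting $p,x_2,x_3$ in order has, by injectivity, a subarc from $x_2$ to $x_3$ avoiding $p$, contradicting the choice. If $p$ were an arc end point, pick two other points $x_2,x_3$; a 3-sac arc visiting $x_2,p,x_3$ in order would place $p$ strictly in its interior.

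For (1) $\Rightarrow$ (3), given $x_1,x_2,x_3 \in X$ and cyclic connectedness, use cyclic connectedness to find a circle $C$ through $x_1,x_2$ and reduce to $x_3 \notin C$. The aim is to produce two arcs $\gamma_1,\gamma_2$ from $x_3$ to distinct points $p_1,p_2 \in C$, each meeting $C$ only at its far endpoint and with $\gamma_1 \cap \gamma_2 = \{x_3\}$; then $C \cup \gamma_1 \cup \gamma_2$ is a theta curve with branch points $p_1,p_2$ containing $x_1,x_2,x_3$. By Theorem~\ref{bell} applied to $X$, no point is an arc-cut point, so $X \setminus \{p\}$ is arc connected for every $p$. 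Fix $p \in C \setminus \{x_1,x_2\}$; inside $X \setminus \{p\}$ take an arc from $x_3$ to $x_1$ and trim it to its first meeting with $C$ to obtain $\gamma_1$ ending at some $p_1 \neq p$. Next, inside $X \setminus \{p_1\}$ take an arc from $x_3$ to a point of $C \setminus \{p_1\}$ and trim to the first meeting with the closed set $\gamma_1 \cup C$ past $x_3$. If the resulting endpoint lies on $C$, set $\gamma_2$ to be this trimmed arc; if instead it lies in the interior of $\gamma_1$, a rearrangement (splicing the piece of $\gamma_1$ past that endpoint to reroute and iterating the construction in the smaller ``tail'') produces a valid $\gamma_2$ landing on $C$.

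The main obstacle is this last part of (1) $\Rightarrow$ (3): ensuring the two arcs to $C$ can be chosen to land on \emph{distinct} points and to meet only at $x_3$. The first-entry trimming is routine, but handling the case where a candidate arc re-enters the existing configuration on the interior of $\gamma_1$ rather than on $C$ requires the rearrangement-of-arcs argument sketched above, and this is where the bulk of the technical work lies.
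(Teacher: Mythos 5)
Your implications $(3)\Rightarrow(2)$ and $(2)\Rightarrow(1)$ match the paper's argument (circle and theta curve are $3$-sac; verify the hypotheses of Theorem~\ref{bell}), and those parts are fine. The genuine gap is in $(1)\Rightarrow(3)$, exactly where you locate it, and your sketch does not close it. Building $\gamma_1$ first and then trying to manufacture a second arc from $x_3$ to $C$ meeting $\gamma_1$ only at $x_3$ is the whole difficulty: the ``first meeting with $\gamma_1\cup C$ past $x_3$'' need not exist (the candidate arc may share an initial segment with $\gamma_1$, or meet it in points accumulating at $x_3$), and if instead you trim at the \emph{last} exit from $\gamma_1$ you obtain a branch attached to $\gamma_1$ at some interior point $w\neq x_3$; then $C\cup\gamma_1\cup(\text{branch})$ contains a theta curve through $x_1,x_2,w$ but $x_3$ sits on a tail hanging off it, so no single splice repairs this, and your ``reroute and iterate in the smaller tail'' is not shown to terminate or to converge to two arcs meeting only at $x_3$. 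In effect you are trying to prove, one arc at a time, that $x_3$ and the circle $C$ are joined by two arcs meeting only at $x_3$, which is essentially a cyclic-connectivity statement in its own right.

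The paper sidesteps this by invoking cyclic connectedness a second time, for the pair $x_3,x_1$: this produces \emph{both} arcs simultaneously, as the two halves $\alpha,\beta$ of a circle through $x_3$ and $x_1$ (so $\alpha\cap\beta=\{x_3,x_1\}$). Trimming each at its first meeting with $C$ gives arcs $\alpha',\beta'$ that automatically intersect only in $\{x_3\}$ or $\{x_3,x_1\}$; if the two first-meeting points $a,b$ are distinct, $C\cup\alpha'\cup\beta'$ is already the theta curve. If $a=b$, then since $a=b\in\alpha\cap\beta$ and $x_3\notin C$, necessarily $a=b=x_1$, so $\alpha'\cup\beta'$ is a \emph{circle} $D$ through $x_3$ meeting $C$ only at $x_1$; now the no-arc-cut-point property at $x_1$ (Theorem~\ref{bell}) gives an arc from $x_3$ to $C$ missing $x_1$, and a last-exit/first-entry trim attaches it to $D$ at some $w\neq x_1$ and to $C$ at some $c\neq x_1$. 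Here it is harmless that $w$ may differ from $x_3$, precisely because $D$ is a circle: splitting $D$ at $x_3$ and at $x_1$ (routing one half through $w$ and the connecting arc) still yields a theta curve through $x_1,x_2,x_3$. If you wish to keep your one-arc-at-a-time scheme, you must prove the rearrangement step as an actual lemma; as written, the step where the second arc re-enters $\gamma_1$ is a missing argument, not a routine trimming.
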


\begin{proof} $(3) \Rightarrow (2)$: this follows from the fact that the circle and theta curve are both $3$-sac. 

$(2) \Rightarrow (1)$: if $X$ is $3$-sac, then for any $x\in X$, there is an arc that contains $x$ in its interior. So $X$ has no endpoints. If $X$ has an arc-cut point then, by Lemma 1,  $X$ is not $3$-sac. Now by Theorem~\ref{bell} $X$ is cyclicly connected. 

$(1) \Rightarrow (3)$: Let $x,y,z \in X$. By (1) there is a circle $C$ in $X$ containing $x$ and $y$. If $z\in C$ we are done. If $z\notin C$, then by (1) there are two arcs, $\alpha$ and $\beta$, from $z$ to $x$, that only meet at the endpoints. Let $a$ and $b$ be the points when $\alpha$ and $\beta$ first intersect $C$ and let $\alpha'$ and $\beta'$ be parts of $\alpha$ and $\beta$ from $z$ to $a$ and $b$ respectively. If $a\neq b$, $C\cup \alpha' \cup \beta'$ is the desired theta curve. If $a=b$, then $a$ is not  an arc-cut point by the above theorem, so there is an arc $\gamma$ from $z$ to some point on $C$ other than $a$ that misses $a$. Let $\gamma'$ be part of $\gamma$ that starts in $(\alpha' \cup \beta')-\{ a\}$, ends in $C-\{ a\}$ and does not meet $C\cup \alpha' \cup \beta'$ otherwise. Then $C\cup \alpha' \cup \beta' \cup \gamma'$ contains a theta curve that passes through $x,y,z$. 
\end{proof}

Notice that for finite graphs cyclicly connected is equivalent to having no cut points.

\begin{cor}
For a finite graph $X$, the following are equivalent:

\begin{enumerate}

\item[(1)] $X$ has no cut points;

\item[(2)] $X$ is $3$-sac;

\item[(3)] Any three points in $X$ lie either on a circle or on a theta curve.
\end{enumerate}
\end{cor}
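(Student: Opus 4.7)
The plan is to deduce the corollary directly from Proposition~\ref{char_3sac} by verifying the remark preceding it: for a finite graph, being cyclicly connected is equivalent to having no cut points. Since Proposition~\ref{char_3sac} already establishes (2) $\Leftrightarrow$ (3) $\Leftrightarrow$ \emph{cyclicly connected} for any continuum, it remains only to show (1) $\Leftrightarrow$ \emph{cyclicly connected} under the extra hypothesis that $X$ is a finite graph.

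For the easier direction, suppose $X$ is cyclicly connected. To show $X$ has no cut point, fix $p \in X$ and any $x,y \in X \setminus \{p\}$. Some circle $C \subseteq X$ contains $x$ and $y$, and its two arcs from $x$ to $y$ meet only at those endpoints; at most one of them passes through $p$, so the other yields a path from $x$ to $y$ in $X \setminus \{p\}$. Hence $X \setminus \{p\}$ is connected and $p$ is not a cut point.

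For the converse, I would invoke the Bellamy--Lum theorem (Theorem~\ref{bell}), verifying each condition of its clause (2). A connected finite graph is automatically arc connected. In a finite graph, the notion of cut point coincides with that of arc-cut point, so the absence of cut points immediately gives the absence of arc-cut points. Finally, an arc endpoint of a finite graph $X$ must be a degree-one vertex $v$; but then its unique neighbour $w$ would separate $v$ from the remainder of $X$, making $w$ a cut point --- contradicting the hypothesis. So $X$ has no arc endpoints, and Bellamy--Lum delivers cyclic connectedness.

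The only delicate points are the degenerate cases --- a single point or a single edge --- where one checks the equivalence by direct inspection; I expect this to be the only real bookkeeping, and not a substantive obstacle. Apart from that, the proof is a direct chaining of Proposition~\ref{char_3sac} and Theorem~\ref{bell} via the elementary graph-theoretic observation above.
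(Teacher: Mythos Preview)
Your proposal is correct and follows essentially the same route as the paper: the paper simply remarks, immediately before the corollary, that for finite graphs cyclic connectedness is equivalent to having no cut points, and then the corollary is an immediate consequence of Proposition~\ref{char_3sac}. You have merely supplied the details of that remark (via Theorem~\ref{bell}), which the paper leaves implicit.
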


It follows immediately from this characterization of $3$-sac graphs, combined with Lemma~3.2 and the proof of Proposition~3.4 of \cite{ac} that the set of $3$-sac graphs, considered as a subspace of $\mathcal{C}(I^3)$, the hyperspace of all subcontinua of the cube, is the intersection of a $G_{\delta \sigma}$ set (countable union of  countable intersections of open sets) and a $F_{\sigma \delta}$ set (countable intersection of countable unions of closed sets), but is neither a $G_{\delta \sigma}$ set nor a $F_{\sigma \delta}$ set. 

An alternative way of expressing this fact, is that there is a characterization of $3$-sac graphs of the logical form $\forall p \exists q \forall r \, \theta(p,q,r) \land \exists p \forall q \exists q \, \phi(p,q,r)$, where the quantifiers run over \emph{countable} sets, and $\theta(p,q,r)$, $\phi(p,q,r)$ are simple (boolean) sentences. However no logically simpler description of the $3$-sac graphs exists. 
This should be contrasted with the fact that the \emph{definition} of $3$-sac --- `$\forall x_1, x_2, x_3 \in X \, \exists \text{ arc } \alpha \ldots$' --- the two quantifiers run over \emph{uncountable} sets. Thus the given characterization of $3$-sac graphs is significantly simpler than the definition.

\section{Regular Curves}

In this section we construct, for every $n\geq 3$, an $n$-sac regular continuum, then using the Finite Gluing Lemma (Lemma~\ref{finite_gluing}) we
show that for any $n\geq 2$ there is a regular continuum --- therefore rational --- that is $n$-sac but not $(n+1)$-sac. 

We start the section by introducing the basic elements needed to construct an $n$-sac regular continuum.

Fix $N \ge 3$. Suppose $v_1, \ldots , v_k$ are affinely independent points in $\R^{N-1}$. Denote by $\langle v_1, \ldots , v_k\rangle$ the convex span of $v_1$ through $v_k$. Then $\langle v_1, \ldots , v_k\rangle$ is a $k$-simplex. We call the points $v_1, \ldots, v_k$ the vertices of $\langle v_1, \ldots , v_k\rangle$. For any $i \ne j$, we call $\langle v_i , v_j \rangle$ the edge from $v_i$ to $v_j$, and we let $v_i \wedge v_j$ be the midpoint between $v_i$ and $v_j$. Note that the space of all edges, $\bigcup_{i < j \le k} \langle v_i, v_j \rangle$, of $\langle v_1, \ldots , v_k\rangle$, is a complete graph on the vertices $v_1, \ldots , v_k$.

Fix $v_1, \ldots, v_N$ affinely independent points in $\R^{N-1}$, for example let $v_1$ through $v_{N-1}$ be the standard unit coordinate vectors, and $v_N=\mathbf{0}$.
Define the operation $\mathop{Trix}$ taking a simplex $\langle v_1, \ldots , v_N\rangle$ and returning a set of simplices, $\{ \langle v_i, v_i \wedge v_j : i \ne j\rangle : i =1, \ldots , N\}$. Inductively define sets of $N$-simplices as follows: $\mathcal{T}^N_0 = \{ \langle v_1, \ldots , v_N\rangle\}$, and $\mathcal{T}^N_{m+1} = \bigcup_{S \in \mathcal{T}^N_m} \mathop{Trix}(S)$. Let $T^N_m = \bigcup \mathcal{T}^N_m$, and $T^N=\bigcap_m T^N_m$. Then $T^N$ is a regular continuum we call the $N$-trix. Observe that the $3$--trix is the Sierpinski triangle, and the $4$--trix is the tetrix (hence our name for these continua).

Some additional notation. Given a simplex $S=\langle v_1, \ldots , v_N\rangle$, let $\mathcal{T}_1=\mathop{Trix}(S)$, and $T_1 = \bigcup \mathcal{T}_1$. Take any element of $\mathcal{T}_1$, say $S_i=\langle v_i, v_i \wedge v_j : j \ne i\rangle$. Call the point $v_i$ the external vertex of $S_i$, and call the points $v_i \wedge v_j$, for $j \ne i$, the internal vertices of $S_i$. For any $S$ in $\mathcal{T}_1$, denote the external vertex of $S$ by $v(S)$. For any two elements, $S$ and $S'$ of $\mathcal{T}_1$, denote the (unique) internal vertex common to $S$ and $S'$ by $S \wedge S'$. Note that $\{S \wedge S'\} = S \cap S'$.
Further for any $x$ in $T_1$, fix an element, $S(x)$, of $\mathcal{T}_1$ containing $x$.

It is easy to verify directly, or by applying Theorem~\ref{bell} and Proposition~\ref{char_3sac} that all $N$-trixes are $3$-sac.
From Lemma~\ref{planar_ctm_not_4sac} we see that the $3$-trix (i.e. the Sierpinski triangle) is not $4$-sac. Rather unexpectedly, the $4$-trix (i.e. the tetrix) is also not $4$-sac. To see this consider the sequence of points $x_1=v_1 \wedge v_2 , x_2=v_3, x_3=v_2, x_4=v_1 \wedge v_3$.
Using computer calculations, we have verified that the $5$--trix is $5$-sac but not $6$-sac. It is not clear to the authors for which $n$ a given $N$-trix is $n$-sac, but we can show that there is no upper bound on the natural numbers, $n$, that can be realized.

\begin{lemma}\label{n_trix}
Fix $n \ge 3$. Let $N=6n^2+12n+1$. The $N$-trix is $n$-sac.
\end{lemma}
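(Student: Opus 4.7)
I would prove this by induction on $n$, establishing the slightly stronger statement that for every $N \ge 6n^2 + 12n + 1$ the $N$-trix is $n$-sac. The base case $n = 3$ is already in hand, since every $N$-trix is $3$-sac. For the inductive step, each level-$1$ subsimplex of $T^N$ is itself a scaled copy of the $N$-trix for the same $N$, and since $N \ge 6(n-1)^2 + 12(n-1) + 1$, the inductive hypothesis tells me that every such subsimplex is $(n-1)$-sac; I will use this repeatedly.

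Given distinct points $x_1, \ldots, x_n \in T^N$, I assign each $x_i$ to a level-$1$ subsimplex $S(x_i) \in \mathcal{T}_1$ (choosing one arbitrarily if $x_i$ happens to be a shared internal vertex), and group maximal consecutive runs with common $S(x_i)$ into blocks $B_1, \ldots, B_p$ of sizes $d_1, \ldots, d_p$ summing to $n$. I split each block $B_j$ into $v_j = \max\{1, \lceil d_j/(n-3)\rceil\}$ sub-visits, so that each sub-visit contains at most $n - 3$ of the $x_i$'s. The arc I intend to build is a concatenation of subarcs --- one per sub-visit --- interspersed with ``scratch'' subarcs through other level-$1$ subsimplices that serve as detours.

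The bookkeeping is organized around a sequence $V_1, \ldots, V_r \in \mathcal{T}_1$ with consecutive entries distinct, recording which subsimplex the arc is in at each stage. Each transition $V_l \to V_{l+1}$ passes through the unique shared vertex $V_l \wedge V_{l+1}$, and injectivity of the final arc requires that no such transition vertex be used twice; equivalently, $V_1, \ldots, V_r$ must be a trail in the complete graph on $\mathcal{T}_1$. The number of border vertices of each subsimplex $S$ consumed is $2v(S) - \delta_S$, where $v(S)$ is $S$'s frequency in the sequence and $\delta_S \in \{0, 1, 2\}$ counts endpoints of the whole arc lying in $S$; this must not exceed $N - 1 = 6n(n+2)$. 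Once the visit sequence is fixed, each sub-visit in a subsimplex $S$ presents at most $n - 1$ points to be traversed in order (an entry vertex, at most $n - 3$ assigned $x_i$'s, and an exit vertex, with one of the border vertices replaced by $x_1$ or $x_n$ for the very first and very last sub-visit), and the inductive hypothesis applied to $S$ supplies the required subarc. Concatenation yields an injective arc because distinct level-$1$ subsimplices meet only at their shared vertices, each used exactly once as a transition point.

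The main obstacle is constructing the visit sequence so that the trail condition holds while the per-subsimplex budget $N - 1 = 6n(n+2)$ is respected. Although the total number of sub-visits is only $O(n)$, each jump between successive sub-visits of the same block or between consecutive blocks may require inserting scratch subsimplices to avoid reusing a transition vertex --- the hardest case being when all or most of the $x_i$'s lie in a single subsimplex, forcing many splits and hence many detours. The quadratic bound on $N$ is chosen precisely to guarantee that such a trail can always be constructed: both the active subsimplices handling blocks and the scratch subsimplices handling detours have enough spare border vertices for the routing to close up, and the required combinatorial case analysis is where most of the work lies.
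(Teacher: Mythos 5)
There is a genuine gap, and it sits exactly where your plan leans hardest on the inductive hypothesis. When a block of consecutive points inside a single level-$1$ subsimplex $S$ has more than $n-3$ members (which can happen: up to $n-1$ of the $x_i$ can be consecutive and lie in one $S$ even after zooming in to the smallest simplex containing all the points), your arc must enter and leave $S$ several times. Your bookkeeping (the trail condition and the budget of $2v(S)-\delta_S$ boundary vertices) only prevents two passes from sharing a \emph{transition vertex}; it says nothing about the passes crossing each other in the \emph{interior} of $S$. But the inductive hypothesis you invoke --- $S\cap T$ is $(n-1)$-sac --- only asserts the existence of some arc through a prescribed ordered list of at most $n-1$ points, with no control whatsoever over where else that arc wanders inside $S$; two separate applications of it in the same $S$ can perfectly well produce intersecting subarcs, and then the concatenation is not injective. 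The natural repair --- apply the sac property of $S$ \emph{once} to the full concatenated list (entry$_1$, points, exit$_1$, entry$_2$, points, exit$_2$, \dots), whose consecutive subarcs are automatically disjoint --- needs $S$ to be $(d+2v)$-sac with $d+2v$ as large as $n+3$, which an induction on $n$ cannot supply. So the central mechanism of your induction is too weak for precisely the case you identify as hardest, and the deferred ``combinatorial case analysis'' cannot close this, because no routing of the visit sequence fixes the interior-disjointness problem.

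This is also the point where the paper's proof diverges from yours: instead of inducting on $n$ (and using plain $(n-1)$-sac of the subsimplices), it fixes $n$ and inducts on the \emph{height} $m$ of the point configuration, proving a strengthened statement that provides, besides the arc through $x_1,\dots,x_n$ in order, a family of pairwise disjoint ``spur'' arcs from each $x_i$ out to external vertices of a reserved family $\mathcal{E}$ of more than $3n$ untouched subsimplices. Those spurs (together with auxiliary exit points inserted between non-consecutive points sharing a subsimplex, and a hub simplex $C$ used for the long-distance connections) are exactly the disjointness control your proposal is missing: they let the construction leave a subsimplex and re-enter it, or re-route around an already-used piece, without creating self-intersections. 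If you want to salvage your outline, you would need to strengthen your inductive statement in a similar way (an arc plus disjoint escape routes, or a ``sac with prescribed disjoint continuations'' property), rather than relying on the bare $(n-1)$-sac property of the level-$1$ pieces.
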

\begin{proof}
Let $T=T^N$, the $N$-trix, and --- since $N$ is fixed to be $6n^2+12n+1$ --- otherwise suppress the superscript $N$. Take any $n$ points in $T$, say $x_1, \ldots , x_n$. Then there is a minimum $m \ge 1$ such that the $x_i$ are in distinct simplices in $\mathcal{T}_m$. Further there is a maximum $m'$ so that all the $x_i$ are in the some simplex $S$ of $\mathcal{T}_{m'}$. If there is an arc in $S \cap T$ visiting the points in order, then that same arc visits the points in order inside $T$. So without loss of generality, we can suppose that $m'=0$,  $S=T_0$, and the points $x_1, \ldots , x_n$ (obviously) each lie in an element of $\mathcal{T}_1$, but not all in the same element. Now call $m$ the height of the points, $x_1, \ldots, x_n$. 

There are $N=6n^2+12n+1$ elements of $\mathcal{T}_1$. Each of the $n$ points, $x_i$, can only be in at most $2$ members of $\mathcal{T}_1$. Hence we can find a subset $\mathcal{E}$  of $\mathcal{T}_1$ such that $\mathcal{E}$ has at least $3n+1$ members, and no point $x_i$ is in any element of $\mathcal{E}$. The lemma now follows from the next claim, which we prove by induction on $m$.

\paragraph{Claim:} for each $m \ge 0$,  points $x_1, \ldots , x_n$ of height $m$, and subset $\mathcal{E}$ of $\mathcal{T}_1$, such that $|\mathcal{E}| > 3n$ and $\mathcal{E} \cap \{S(x_i) : i \le n\} = \emptyset$ (for any choice of $S(x_i)$), there is an arc $\alpha$ visiting the points $x_1, \ldots, x_n$ in order, and, for $i=1, \ldots, n$, disjoint arcs (called `{\it spurs}'), $\beta_i$ from $x_i$ to the external vertex, $v(E)$, of some $E$ in $\mathcal{E}$.

\smallskip

\noindent {\bf Base Step, $m=1$.} Since $m=1$, we can assume that the sets $S(x_i)$, for $i=1, \ldots , n$, are distinct.

Pick some $C$ in $\mathcal{E}$. Let $\mathcal{E}'=\mathcal{E} \setminus \{C\}$. For each $i \le n$, and $j=1,2,3$, pick distinct $E_{i,j}$ from $\mathcal{E}'$. For each $i \le n$, pick three disjoint arcs in $S(x_i)$: $\alpha^-_i$ from $x_i$ to $S(x_i) \wedge E_{i,1}$, $\alpha^+_i$ from $S(x_i) \wedge E_{i,2}$ to $x_i$, and $\beta_i$ from $x_i$ to $S(x_i) \wedge E_{i,3}$. Extend $\beta_i$ by following the edge in $E_{i,3}$ to $v(E_{i,3})$. These $\beta_i$ are the required `spurs'. Denote by
$\Omega$ the set of simplices, $E_{i,3}$, containing these spurs.

For $i < n$, let $\alpha_i$ be the arc formed by following the natural edges (of elements of $\mathcal{T}_1$) between these vertices in the prescribed order: $S(x_i) \wedge S(E_{i,1})$, $S(E_{i,1}) \wedge S(C)$, $S(C) \wedge S(E_{i+1,2})$ and $S(E_{i+1,2}) \wedge S(x_{i+1})$. Let $\alpha$ be the path obtained by following these arcs in the given order: $\alpha^-_1, \alpha_i, \alpha^+_2$, $\alpha^-_2, \alpha_2 , \alpha^+_3$, $\ldots , \alpha^-_i , \alpha_i , \alpha^+_{i+1}, \ldots$, and finally $\alpha^-_{n-1} , \alpha_{n-1}, \alpha^+_n$. Since all the vertices appearing in the definition of the $\alpha_i$s are distinct, $\alpha$ is a path which does not cross itself, and so is an arc, which, by construction, visits the points $x_1, \ldots, x_n$ in order.

\smallskip

\noindent {\bf Inductive Step.} We assume the claim is true when the points come from a level $<m$. Prove for points on level $m$. First observe that for any $S$ in $\mathcal{T}_1$, $S\cap \mathcal{T}_m$
is homeomorphic to $\mathcal{T}_{m-1}$.

For clarity we will use $I_l$ to denote the set $\{1, 2, \dots , l\}$.
Let $x_1, x_2, \dots , x_n$ be $n$ points in $T$ of height $m$ and $\{S^1, S^2, \dots , S^k \}=\{S(x_1), \dots, S(x_n) \}$. For each $i\in I_k$ let
$x_{(i,1)}, x_{(i,2)}, \dots , x_{(i,k_i)}$ be a reenumeration of all the $x_j$s in $S^i$ such that
if $x_t=x_{(i,s)}$ and $x_l=x_{(i,r)}$ then $(i,s) < (i,r)$ if and only if $t < l$. For all $S$ in $\mathcal{T}_1$, let $\mathcal{S}_{m-1}$ denote $S\cap \mathcal{T}_m$. In each $S^i$ pick $6n+1$-many simplices of 
$\mathcal{S}^i_1$ that do not contain any of $x_{(i,j)}$s, none of them share external 
vertices, and none contain the external vertex of $S^i$; this can be done since $\mathcal{S}^i_1$ consists of $6n^2+13n+1$ simplices
and $S^i$ contains at most $n-1$ elements of $\{x_1, x_2, \dots , x_n\}$. Let this set of simplices be $\mathcal{E}_i$.

In the next step we will choose the simplices that will allow us to construct an arc between two consecutive $x_is$, whenever they lie on different elements
of $\mathcal{T}_1$.

For each $i\in I_k$ let $\Upsilon_i$ be a set of simplices $Y_{(i,j)}$ in $\mathcal{T}_1$
given as follows:
\begin{enumerate}
\item For $j < k_i$,
\begin{enumerate}
\item If $x_{(i,j)}$ and $x_{(i, j+1)}$ are not consecutive points in $\{x_1, x_2, \dots , x_n\}$,
then pick $Y_{(i,j)}$ such that $$Y_{(i,j)}\not\in\{S^1, S^2,\dots , S^k\}\cup
\{\bigcup_{t=1}^{i-1}\Upsilon_t\}\cup\{Y_{(i,1)}, Y_{(i,2)}, \dots , Y_{(i, j-1)}\},$$
$Y_{(i,j)}\cap \{x_1, x_2, \dots , x_n\}=\emptyset,$ and such that $Y_{(i,j)}\wedge S^i$ lies
in an element of $\mathcal{S}_{m-1}$ different from the elements containing the $x_{(i,j)}$s and the elements of $\mathcal{E}_i$.

\item If $x_{(i,j)}$ and $x_{(i, j+1)}$ are consecutive points in $\{x_1, x_2, \dots , x_n\}$,
then do nothing.
\end{enumerate}

\item For $j=k_i$,
\begin{enumerate}
\item If $x_{(i,k_i)}\neq x_n$, then pick $Y_{(i,k_i)}$ as above, satisfying the conditions
on (a).

\item If $x_{(i,k_i)}= x_n$, then do nothing.

\end{enumerate}

\item For $j=1$ (in some cases we are selecting twice for $j=1$),

\begin{enumerate}
\item If $x_{(i,1)}\neq x_1$, then pick $Y_{(1,0)}$ as in (1), satisfying the conditions on (a).

\item If $x_{(i,1)}= x_1$, then do nothing.

\end{enumerate}

\end{enumerate}

Denote by $y_{(i,j)}$ the vertex $S^i\wedge Y_{(i,j)}$. 

Now, in each sequence 
$\{x_{(i,1)}, x_{(i,2)}, \dots , x_{(i,k_i)}\}$ insert the points $y_{(i,j)}$ (if they exist) as follows:
$y_{(i,0)}$ before $x_{(i,1)}$, and $y_{(i,j)}$ immediately after corresponding $x_{(i,j)}$. So, for each
$i\in I_k$, we have constructed a sequence $l_i$ in $S^i$ such that a point $y_{(i,j)}$ lies between
$x_{(i,j)}$ and $x_{(i,j+1)}$ only if $x_{(i,j)}$ and $x_{(i,j+1)}$ are not consecutive points on
$\{x_1, x_2, \dots , x_n\}$.
Observe that for each $i\in I_k$, the set of points $x_{(i,1)}, x_{(i,2)}, \dots x_{(i,k_i)}$ have, in 
$S^i$,  height at most $m-1$, hence, by the choice of $y$s, the sequence of points $l_i$ also has height 
at most $m-1$. 

By Inductive Hypothesis, applied to $S^i$, $l_i$ and $\mathcal{E}_i$, there is an arc $\alpha_i$ in $S^i$ 
visiting the points of $l_i$ in order, and disjoint spurs $\beta_a$ for each $a\in l_i$ to 
external vertices of some elements in $\mathcal{E}_i$.   

\smallskip

\noindent
\textbf{Construction of an arc} through $x_1, x_2, \dots , x_n$: 
Pick $C\in \mathcal{E}$ containing none of the $y$s or $x$s. 
For each $i\in I_{n-1}$, let $\gamma_i$ be the arc connecting $x_i$ to $x_{i+1}$ given as
follows: If  $x_i$ and $x_{i+1}$ are in the same $S^t$ then $\gamma_i$ is the subarc
of $\alpha_t$ connecting them. If not, then $x_i=x_{(p,j)}$, $x_{i+1}=x_{(r,l)}$ and 
$y_{(p,j)}, y_{(r,l-1)}$ exist. 
Let $\gamma_i=\gamma_i^1\cup\gamma_i^2\cup\gamma_i^3\cup\gamma_i^4\cup\gamma_i^5$, where
$\gamma_i^1, \gamma_i^2, \gamma_i^3, \gamma_i^4, \gamma_i^5$ are as follows:

\begin{enumerate}
\item $\gamma_i^1$ is the subarc of $\alpha_p$ from $x_{(p,j)}$ to $y_{(p,j)}$ if possible or 
else a spur from  $x_{(p,j)}$ to some vertex $u$ of $S^p$, whichever is unused yet. 
In any case, there is a simplex $U$ in $\mathcal{T}_1$ such that $u=S^p \wedge U$ or
$y_{(p,j)}=S^p\wedge U$. Let $\gamma_i^2$ be the edge in $U$ connecting $S^p\wedge U$ to
$U\wedge C$.

\item similarly as in (1), $\gamma_i^3$ is the subarc $\alpha_r$ from $x_{(r,l)}$ to $y_{(r,l-1)}$ 
if possible or else a spur from $x_{(r,l)}$ to some vertex v of $S^r$, whichever is unused yet. In
any case, there is a simplex $V$ in $\mathcal{T}_1$ such that $v=S^r \wedge V$ or
$y_{(r,l-1)}=S^r\wedge V$. Let $\gamma_i^4$ be the edge in $V$ connecting $S^r\wedge V$ to
$V\wedge C$.

\item let $\gamma_i^5$ be the edge in $C$ connecting $U\wedge C$ and $V\wedge C$. 
\end{enumerate}

Let $\alpha=\bigcup_{i=1}^{n-1}\gamma_i$. Because of how $y$s and spur destinations were picked, $\alpha$
is an arc that visits the points $x_1, x_2, \dots , x_n$ in order.

\smallskip

\noindent
\textbf{Construction of spurs} to external vertices of elements of $\mathcal{E}$: suppose we have 
constructed spurs for all $x_l$, $l<i$ and $x_i\in S^j$. If spur $\beta_{x_i}$ of $x_i$ in $S^j$ is not 
contained in $\alpha$ then it only intersects it at $x_i$, extend $\beta_{x_i}$ as follows:  let $v_i$ be 
the other endpoint of $\beta_{x_i}$. Let $V_i$ be the simplex in 
$\mathcal{T}_1\setminus (\{\bigcup_{i=1}^k\Upsilon_i\} \cup(\bigcup_{i=1}^{k}S^i) \cup C)$ 
that intersects $S^j$ at $v_i$. Pick any simplex $E_i$ in $\mathcal{E}\setminus C$ that has not been 
picked for 
previous spur constructions. The spur $\beta_i$ consist of $\beta_{x_i}$, 
followed by the edge in $V_i$ connecting $v_i$ and $E_i\wedge V_i$, and the edge in $E_i$ connecting 
$E_i\wedge V_i$ with $v(E_i)$. 

Suppose $\beta_{x_i}$ is contained in $\alpha$. Observe that in this case $x_i=x_{(j,r)}$ and $x_{(j,r-1)}$
are not consecutive points of $\{x_1, x_2, \dots , x_n\}$, otherwise $\beta_{x_i}$ would not be
contained in $\alpha$. Hence $y_{(j,r-1)}$ exists. Let $\delta$ be the subarc of $\alpha_j$ connecting
$x_{(j,r)}$ to $y_{(j,r-1)}$, let $\gamma$ be the subarc of $\alpha$ connecting $x_{(j,r-1)}$ to
$y_{(j,r-1)}$, and let $\omega$ be the other end point of the spur $\beta_{y_{(j,r-1)}}$. By construction,
$\alpha \cap \delta=\{x_{(j,r)}, y_{(j,r-1)}\}$ and $\alpha \cap \beta_{y_{(j,r-1)}}=\{y_{(j,r-1)}\}$.

Since the diameters of the simplices in $\mathcal{T}_t$ approach zero as $t$ increases,
there exists, for a sufficiently large $t$, a simplex $\Lambda$ in $\mathcal{S}^j_t$ with the 
following properties:
\begin{enumerate}
\item $y_{(j,r-1)}$ is a vertex of $\Lambda$,

\item $x_{(j,r)}, \ x_{(j,r-1)}, \ \omega\not\in \Lambda$,

\item $\Lambda \cap \alpha$ is connected, and

\item $\Lambda$ does not intersect any spur, except for $\beta_{y_{(j,r-1)}}$.
\end{enumerate}

By the choice of $\Lambda$, the arcs $\delta$, $\gamma$  and $\beta_{y_{(j,r-1)}}$ intersect $\Lambda$ 
at different vertices of $\Lambda$, say $a, \ b, \ c$ respectively. Then revise $\alpha$ to go form $b$ 
to $y_{(j,r-1)}$ through an edge of $\Lambda$ and let $\beta$ consist of the following parts: 
the subarc of $\delta$ from $x_i=x_{(j,r)}$ to $a$, followed by the edge in $\Lambda$ from $a$ to $c$,
and followed by the subarc of $\beta_{y_{(j,r-1)}}$ from $c$ to $\omega$. Now extend $\beta$ as in the
previous case to get the spur $\beta_i$ for $x_i$. 
\end{proof}

\begin{lemma}[Finite Gluing]\label{finite_gluing}
If $X$ and $Y$ are $(2n-1)$-sac, and $Z$ is obtained from $X$ and $Y$ by identifying pairwise $n-1$ different points of $X$ and $Y$, then $Z$ is $n$-sac 
but not $(n+1)$-sac.
\end{lemma}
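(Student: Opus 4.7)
The ``not $(n+1)$-sac'' direction is immediate: let $F=\{z_1,\dots,z_{n-1}\}$ be the $n-1$ identified points of $Z$, so that $Z\setminus F$ is the disjoint union of the open sets $X\setminus\{p_1,\dots,p_{n-1}\}$ and $Y\setminus\{q_1,\dots,q_{n-1}\}$; both are nonempty because any $(2n-1)$-sac continuum contains at least $2n-1\ge n$ points. Hence $Z\setminus F$ is disconnected and Lemma~\ref{finite_cut_not_4sac} gives that $Z$ is not $((n-1)+2)=(n+1)$-sac.

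For the ``$n$-sac'' direction, given $n$ distinct points $w_1,\dots,w_n\in Z$, the plan is to assign each $w_i$ a ``side'' $\sigma_i\in\{X,Y\}$ (forced when $w_i$ is not an identified point, chosen freely when $w_i=z_k$), group the points into maximal monochromatic blocks $B_1,\dots,B_b$ with $b\le n$, and then match each of the $b-1$ \emph{transitions} between consecutive blocks of opposite sides to a distinct identified point $\tau_j$. Each $\tau_j$ may be either a ``free'' $z_k$ not among the $w_i$'s or an identified $w_i$ lying at the relevant block boundary (in which case $w_i$ itself effects the crossing); since at most $n-1$ transitions compete for exactly $n-1$ identified points, a short Hall-type argument produces such a matching.

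Next I would invoke $(2n-1)$-sac of $X$ to find a single arc $\alpha_X\subseteq X$ that visits, in order, first every identified point $p_k\in X$ which is neither an X-typed $w_i$ nor a chosen X-transition --- call these \emph{dummies} --- and then the X-events proper (the X-typed $w_i$'s and the X-transitions) in the natural block-induced order. A count shows this list has at most $p_0+(n-1)\le 2n-1$ entries, where $p_0$ is the number of non-identified X-typed $w_i$'s, so the $(2n-1)$-sac hypothesis is exactly enough. Construct $\alpha_Y\subseteq Y$ symmetrically. The desired arc in $Z$ is the concatenation, at the transition points, of the ``useful'' sub-arcs of $\alpha_X$ and $\alpha_Y$, namely the pieces of each arc spanning a single block.

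The main obstacle I foresee is verifying that the concatenation is \emph{injective} in $Z$, hence genuinely an arc: the danger is that $\alpha_X$ might wander through some $p_k$ whose image $z_k$ is also met by $\alpha_Y$, producing a double visit. The role of the dummies is exactly to preclude this. Placing every non-event $p_k$ before the useful events in $\alpha_X$ means, by injectivity of $\alpha_X$, that each useful sub-arc lies in the ``tail'' of $\alpha_X$ disjoint from the dummy segment, hence misses every non-transition $p_k$; the symmetric statement for $\alpha_Y$ then forces the two arcs to meet only at their designated transition endpoints, and the resulting path is the desired $n$-sac arc.
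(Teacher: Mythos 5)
Your proposal is correct and follows essentially the same route as the paper: split the visiting sequence by sides, draw the crossing points from the $n-1$ identified points (the paper inserts a connector from $X\cap Y$ between each consecutive pair lying strictly on opposite sides, which is your transition-matching in greedy form), apply the $(2n-1)$-sac hypothesis to each side's augmented list, and concatenate at the crossings, with the non-$(n+1)$-sac half coming from Lemma~\ref{finite_cut_not_4sac} exactly as you say. Your one addition --- the dummy prefix forcing each side's arc to visit the unused identified points before the useful events --- is a nice way of making the injectivity of the concatenated arc explicit, a point the paper's proof simply asserts.
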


\begin{proof}
Let $z_1,z_2,\dots , z_n$ be any $n$ points in $Z$. For each $i$, if $z_i \in X\setminus Y$ and $z_{i+1} \in Y\setminus X$ or $z_i \in Y\setminus X$ and $z_{i+1} \in X\setminus Y$, pick $z_{(i,i+1)}\in (X\cap Y )\setminus \{z_1,z_2,\dots , z_n, z_{(1,2)}, z_{(2,3)}, \dots , z_{(i-1,i)} \}$ (if $z_{(1,2)}, z_{(2,3)}, \dots , z_{(i-1,i)}$ were picked). This is possible since $|X\cap Y | = n-1$. Let $\mathcal{Z}$ be the sequence of $z_j$s with $z_{(i,i+1)}$s inserted between $z_i$ and $z_{i+1}$ whenever they exist. And let $\mathcal{Z_X}$ be the sequence derived from $\mathcal{Z}$ by deleting the terms that do not belong to $X$. Define $\mathcal{Z_Y}$ similarly. Since elements of $\mathcal{Z_X}$ come either from $\{ z_1,z_2,\dots , z_n \}$ or from $X\cap Y$, $|\mathcal{Z_X} | \le 2n-1$. Similarly, $|\mathcal{Z_Y} | \le 2n-1$. Let $\beta$ be an arc in $X$ going through elements of $\mathcal{Z_X}$ in order and $\gamma$ be an arc in $Y$ going through elements of $\mathcal{Z_Y}$ in order. Let $a_1, a_2, \dots , a_k$ be $z_{(1,2)}, z_{(2,3)}, \dots , z_{(n-1,n)}$ whenever they exist, respectively. Without loss of generality, suppose $z_1 \in X$. Define $\alpha$ to be the union of
the following arcs:

\begin{enumerate}
\item the subarc of $\beta$ from $z_1$ to $a_1$;
\item the subarc of $\gamma$ from $a_1$ to $a_2$;
\item the subarc of $\beta$ from $a_2$ to $a_3$ $\dots$
\item the subarc of $\beta$ or $\gamma$ (depending on whether $k$ is even or odd) from $a_k$ to $z_n$. 
\end{enumerate}

Note that $\alpha$ is an arc visiting the points $z_1,z_2,\dots , z_n$ in order. Hence $Z$ is $n$-sac. The fact that $Z$ in not $(n+1)$-sac follows by
Lemma \ref{finite_cut_not_4sac}.
\end{proof}

Observe that for $n\ge 2$, Lemma \ref{n_trix} implies the existence of a $(2n-1)$-sac regular continuum $G$. Hence by Lemma \ref{finite_gluing} applied to
two disjoint copies of $G$, there exists an $n$-sac regular continuum that is not $(n+1)$-sac. We summarize this in the following theorem.

\begin{theorem}
For every $n\ge 2$ there exists a $n$-sac regular continuum that is not $(n+1)$-sac.
\end{theorem}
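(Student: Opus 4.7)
The plan is to combine Lemma~\ref{n_trix} with the Finite Gluing Lemma (Lemma~\ref{finite_gluing}), following the recipe indicated in the paragraph preceding the theorem statement. Fix $n \ge 2$ and set $m = 2n-1 \ge 3$. Applying Lemma~\ref{n_trix} with this value of $m$ (taking $N = 6m^2 + 12m + 1$), the $N$-trix is an $m$-sac $=(2n-1)$-sac regular continuum; call it $G$.

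Next, form $Z$ by taking two disjoint homeomorphic copies $G_1$ and $G_2$ of $G$, choosing $n-1$ points in each (any $n-1$ distinct points will do --- for concreteness, one may select $n-1$ distinct external vertices of simplices in $\mathcal{T}^N_1$), and identifying them pairwise. Since $G_1$ and $G_2$ are both $(2n-1)$-sac, Lemma~\ref{finite_gluing} applies directly to give that $Z$ is $n$-sac but not $(n+1)$-sac. The $n$-sac conclusion uses the arc-construction argument in the proof of the Finite Gluing Lemma; the failure of $(n+1)$-sac follows since removing the $n-1$ identified points disconnects $Z$, so Lemma~\ref{finite_cut_not_4sac} (with $|F| = n-1$) applies.

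The only point not immediately covered by quoting the two lemmas is that $Z$ remains a regular continuum. This is a routine verification: since only finitely many points were identified, for each $z \in Z$ a basic neighborhood can be taken as (the image of) $U_1 \cup U_2$ where each $U_i$ is a small open set in $G_i$ chosen from the finite-boundary base witnessing regularity of $G_i$ (for points not among the identified ones, one of the $U_i$ may be empty). The boundary of such a union in $Z$ is contained in the union of the boundaries of $U_1$ and $U_2$, hence is finite. Connectedness of $Z$ is clear since $G_1, G_2$ are connected and share a point, and compactness follows from compactness of $G_1 \sqcup G_2$ and continuity of the quotient map. Thus $Z$ is a regular continuum as required.

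I expect no real obstacle beyond bookkeeping: the two main lemmas do the heavy lifting, and the regularity check on the quotient is elementary because only finitely many identifications are made. The one small thing to confirm is that the value $m = 2n-1$ satisfies the hypothesis $m \ge 3$ of Lemma~\ref{n_trix}, which holds precisely for $n \ge 2$, matching the range asserted in the theorem.
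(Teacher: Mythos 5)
Your argument is exactly the paper's: take the $(2n-1)$-sac regular continuum supplied by Lemma~\ref{n_trix}, glue two disjoint copies along $n-1$ points, and invoke the Finite Gluing Lemma (with the failure of $(n+1)$-sac coming from Lemma~\ref{finite_cut_not_4sac}). Your added check that the quotient remains a regular continuum and that $2n-1\ge 3$ for $n\ge 2$ is correct bookkeeping that the paper leaves implicit.
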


\section{Rational Curves}

In this section we construct an $\omega$-sac rational continuum.   The motivating idea for the construction is as follows. Recall that the closed disk is $3$-sac but not $4$-sac, and that this is because any arc visiting the cardinal points \emph{North}, \emph{South}, and then \emph{East} (while avoiding \emph{West}) cuts the disk in two, and can not be extended on to \emph{West}. Evidently if we add a handle to the disk, we can use this handle as a bridge from \emph{East} to \emph{West}, and this new space is $4$-sac. (In fact a closed disk with a handle is $6$-sac but not $7$-sac.) If we add more and more handles then the derived space will be $n$-sac for larger and larger $n$. So to construct our example of an 
$\omega$-sac rational continuum we start by modifying Charatonik's description of an example due to Urysohn. This is a rational continuum such that removing no finite subset disconnects it --- and so does not fail to be $\omega$-sac for that reason. However, like the closed disk, it is not $4$-sac for another reason: it is planar. So we further modify the space by adding an infinite and dense set of handles. This has be done so as to preserve rationality.

\begin{theorem} \label{omega_rational}
There is a locally connected rational continuum which is $\omega$-sac.
\end{theorem}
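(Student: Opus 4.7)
The plan is to follow the blueprint sketched in the paragraph preceding the theorem. Begin with Charatonik's rendition of Urysohn's locally connected planar rational continuum $U$; this is already a rational curve with the property that no finite subset disconnects it, so that Lemma~\ref{finite_cut_not_4sac} does not obstruct $\omega$-sac. Its only obstruction to being $4$-sac is then planarity, via Proposition~\ref{planar_ctm_not_4sac}. To defeat planarity I would attach a countable family of small ``handle'' arcs $H_1, H_2, \ldots$ to $U$, realised in $\mathbb{R}^3$ by using the third coordinate to lift each handle off the plane, with diameters tending to zero and with attachment pairs $(p_k,q_k)$ chosen to lie densely in $U \times U$.

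The construction proceeds by induction: at stage $m$ one picks a finite $1/m$-net $N_m$ in $U$ and attaches a handle between every pair of points in $N_m$, with diameters bounded by some $\varepsilon_m \to 0$. Local connectedness of the resulting space $X = U \cup \bigcup_k H_k$ is inherited from $U$ and from the individual handles, because sufficiently small neighborhoods of any point are unions of locally connected pieces meeting only at finitely many points. Rationality is maintained by checking that each $x \in X$ has arbitrarily small open neighborhoods $W$ of the form $V \cup \bigcup_{k \in F}(H_k \cap V)$, where $V$ is a neighborhood of $x$ in $U$ with countable (in fact finite) boundary and $F$ is the finite set of handles large enough to meet $V$; the boundary of $W$ is then the boundary of $V$ together with the finitely many endpoints of handles that cross $\partial V$, hence still countable.

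For the $\omega$-sac property, given distinct $x_1, \ldots, x_n$ in $X$, I would construct the arc $\alpha$ by concatenating subarcs $\alpha_i$ from $x_i$ to $x_{i+1}$. Since $U$ has no finite cut set, each $\alpha_i$ can in principle be chosen inside $U$ alone; the real difficulty is to make $\alpha_1, \ldots, \alpha_{n-1}$ pairwise disjoint off their shared endpoints while simultaneously avoiding the remaining $x_j$. Planarity of $U$ forces at most finitely many unavoidable ``crossings'' (a count bounded in terms of $n$), and at each such crossing one reroutes the offending $\alpha_i$ through a handle $H_k$ whose attachment points lie close to the planned crossing site. Density of the handles guarantees a suitable $H_k$ is always available, and only finitely many handles are needed in total.

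The main obstacle is the simultaneous maintenance of rationality and of sufficient density of handles. Rationality demands that each point of $X$ have arbitrarily small neighborhoods whose boundary picks up only countably (indeed finitely) many handle endpoints, while density demands, on the contrary, that near every point infinitely many handles accumulate. The balance is struck by the Cantor-style stage-by-stage placement: the diameters $\varepsilon_m$ must shrink sufficiently fast that, at each point $x$, all but finitely many handles are entirely contained in any prescribed small neighborhood of $x$, leaving the boundary of that neighborhood finite. Arranging this bookkeeping so that the construction simultaneously yields a rational, locally connected continuum and keeps the handles usable in the rerouting step of the $\omega$-sac argument is the core technical burden.
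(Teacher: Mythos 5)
Your construction as stated is internally inconsistent, and the inconsistency is fatal to the local connectedness (and to your own rationality argument). You require the attachment pairs $(p_k,q_k)$ to be dense in $U\times U$ (at stage $m$ a handle between \emph{every} pair of points of a $1/m$-net) while also requiring the handle diameters to tend to zero; but a handle is an arc joining $p_k$ to $q_k$, so its diameter is at least $|p_k-q_k|$, and dense pairs include pairs a definite distance apart. If you keep the long handles (say by making them low lifts of arcs of $U$, to preserve compactness), then infinitely many handles whose attachment points stay a fixed distance $d_0$ from a point $x$ pass arbitrarily close to $x$; since each handle meets the rest of the space only at its two endpoints, any connected neighborhood of $x$ of diameter less than $d_0/2$ would contain a point of such a handle as a relatively clopen piece, so the space fails to be locally connected at $x$ (a harmonic-fan/$\sin(1/x)$ phenomenon). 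This also destroys the mechanism you invoke for rationality and local connectedness, namely that ``all but finitely many handles are entirely contained in any prescribed small neighborhood of $x$.'' If instead you keep only short handles between nearby points (a null family), local connectedness and rationality become plausible, but then the density-in-$U\times U$ you appeal to in the rerouting step has to be reformulated and re-justified.

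The deeper gap is that the $\omega$-sac verification --- the actual content of the theorem --- is asserted rather than proved. ``Planarity of $U$ forces at most finitely many unavoidable crossings, bounded in terms of $n$'' is not a precise statement, and nothing shows that the subarcs $\alpha_i$ can be chosen in $U$ meeting only in finitely many points rather than, say, being forced through the same local cut points (the tangency points of the chains of circles) at many scales; nor is it shown that a reroute through a two-point handle can reach the handle's endpoints and rejoin its course without creating new intersections, since the detour must itself navigate the recursive structure inside the cusps, where the other arcs may block the available tangency points, potentially cascading. This is exactly why the paper does not use handles attached at two points: its ``bridges'' identify entire double sequences of points between the adjacent circles $h_\sigma(S^-)$ and $h_\sigma(S^+)$ at every scale, so a single bridge supports many disjoint crossings and sits exactly where the chains of circles lie; the ordered arc is then built systematically from a grid of horizontal and vertical chains of circles (realizing a grid arc $\alpha'$ chosen in an auxiliary open planar region), and finally modified locally inside each rectangle $R_i$, using a ring of chains and the bridges, to pass through $x_i$. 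Your sketch follows the same motivating idea announced before Theorem~\ref{omega_rational}, but the stage at which you stop --- conceding that the bookkeeping reconciling rationality, local connectedness, density of handles, and the rerouting is ``the core technical burden'' --- is precisely where the proof lives, so as it stands the argument does not establish the theorem.
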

\begin{proof}
Write $B(\mathbf{x} , r)$ for the open disk in the plane of radius $r$ centered at $\mathbf{x}$. Write $S(\mathbf{x}, r)$ for the boundary circle of $B(\mathbf{x},r)$. Pick any monotone sequence $(x_n)_{n=0}^\infty$ in $(0,1)$ increasing to $1$. Let $c_0=0$, $r_0=x_0$ and $c_n=(x_n+x_{n-1})/2$, $r_n=(x_n-x_{n-1})/2$ for $n \ge 1$. Let $\theta$ be rotation of the plane by $90^o$ clockwise. Let $U = \bigcup_{i=0}^3 \theta^i \left( \bigcup_{n=0}^\infty B( (c_n,0), r_n)  \right)$. 

Let $T = [-1,+1]^2 \setminus U$. Let $S$ be the geometric boundary of $T$, so $S= \bigcup_{i=0}^3 \theta^i \left( \bigcup_{n=0}^\infty S( (c_n,0), r_n)  \right) \cup \partial [-1,1]^2$. Let $S^- = S((-c_1,0), r_1)$ and $S^+=S((c_1,0),r_1)$ (the two circles in $S$ immediately to the left and right of the center circle). For i=0 (respectively, i=1) pick a two sided sequence of points, $(p_{m,i}^-)_{m \in \mathbb{Z}\cup \{\pm \infty\}}$, on the top (respectively, bottom) edge of $S^-$ converging on the left to $p_{-\infty,i}^-=(-x_1-r_1,0)$ (the leftmost point of $S^-$) and on the right to $p_{\infty,i}^-=(-x_1+r_1,0)$ (the rightmost point of $S^-$). Find a corresponding pair, $(p_{m,i}^+)_{m \in \mathbb{Z}\cup \{\pm \infty\}}$ for $i=0,$ and~$1$  of double sequences on the top and bottom edges of $S^+$ converging to the leftmost and rightmost points of $S^+$.
Let $T_i = \theta^i \left( [0,1]^2 \cap T \right)$ for $i=0,1,2,3$. Note that each $T_i$ is a topological rectangle with natural `corners' and `midpoints' of the sides. 

\begin{center}
 \begin{tabular}{cc}
\beginpgfgraphicnamed{fig_BaseT}
\begin{tikzpicture}[scale=2.8]
\draw[fill=gray!20] (-1,-1) rectangle (1,1);

\node (bgz) [shape=rectangle,rounded corners=10pt,fill=gray, minimum width=0.75in,minimum height=0.75in] at (-1/2-1/12,1/2+1/12) {};
\draw[fill=white] (-1/2-1/12,1/2+1/12) circle (1/3);
\node [shape=rectangle,rounded corners=3pt,fill=gray, minimum width=0.22in,minimum height=0.22in] (bgc) at (-0.34375 ,0) {};
\draw [<->] (bgc.north) |- (-7/12,2/12) -- (bgz.south);

\draw[fill=white] (0,0) circle (0.25);

\draw[green,very thick] ( 0.25, 0 ) arc (0:90: 0.25);
\draw[green,very thick] (0,1) -- (1,1) -- (1,0);

\draw[fill=white]  ( 0.34375 ,0) circle ( 0.09375 );
\node (Splus) at (0.34375, -0.09375) {};
\draw[fill=white] ( 0.5078125 ,0) circle ( 0.0703125 );
\draw[fill=white] ( 0.630859375 ,0) circle ( 0.052734375 );
\draw[fill=white] ( 0.72314453125 ,0) circle ( 0.03955078125 );
\draw[fill=white] ( 0.792358398438 ,0) circle ( 0.0296630859375 );
\draw[fill=white] ( 0.844268798828 ,0) circle ( 0.0222473144531 );
\draw[fill=white] ( 0.883201599121 ,0) circle ( 0.0166854858398 );
\draw[fill=white] ( 0.912401199341 ,0) circle ( 0.0125141143799 );
\draw[fill=white] ( 0.934300899506 ,0) circle ( 0.00938558578491 );
\draw[fill=white] ( 0.950725674629 ,0) circle ( 0.00703918933868 );
\draw[fill=white] ( 0.963044255972 ,0) circle ( 0.00527939200401 );
\draw[fill=white] ( 0.972283191979 ,0) circle ( 0.00395954400301 );
\draw[fill=white] ( 0.979212393984 ,0) circle ( 0.00296965800226 );
\draw[fill=white] ( 0.984409295488 ,0) circle ( 0.00222724350169 );

\draw[green,very thick] ( 0.25 ,0) arc (180:0: 0.09375 ) -- ( 0.4375 ,0) arc (180:0: 0.0703125 );
\draw[green,very thick] ( 0.578125 ,0) arc (180:0: 0.052734375 );
\draw[green,very thick] ( 0.68359375 ,0) arc (180:0: 0.03955078125 );
\draw[green,very thick] ( 0.7626953125 ,0) arc (180:0: 0.0296630859375 );
\draw[green,thick] ( 0.822021484375 ,0) arc (180:0: 0.0222473144531 );
\draw[green,thick] ( 0.866516113281 ,0) arc (180:0: 0.0166854858398 );
\draw[green,thick] ( 0.899887084961 ,0) arc (180:0: 0.0125141143799 );
\draw[green,thick] ( 0.924915313721 ,0) arc (180:0: 0.00938558578491 );
\draw[green,thick] ( 0.943686485291 ,0) arc (180:0: 0.00703918933868 );
\draw[green] ( 0.957764863968 ,0) arc (180:0: 0.00527939200401 );
\draw[green] ( 0.968323647976 ,0) arc (180:0: 0.00395954400301 );
\draw[green] ( 0.976242735982 ,0) arc (180:0: 0.00296965800226 );
\draw[green] ( 0.982182051986 ,0) arc (180:0: 0.00222724350169 ) -- (1,0);

\draw[fill=white] ( -0.34375 ,0) circle ( 0.09375 );
\node (Sminus) at (-0.34375, -0.09375) {};
\draw[fill=white] ( -0.5078125 ,0) circle ( 0.0703125 );
\draw[fill=white] ( -0.630859375 ,0) circle ( 0.052734375 );
\draw[fill=white] ( -0.72314453125 ,0) circle ( 0.03955078125 );
\draw[fill=white] ( -0.792358398438 ,0) circle ( 0.0296630859375 );
\draw[fill=white] ( -0.844268798828 ,0) circle ( 0.0222473144531 );
\draw[fill=white] ( -0.883201599121 ,0) circle ( 0.0166854858398 );
\draw[fill=white] ( -0.912401199341 ,0) circle ( 0.0125141143799 );
\draw[fill=white] ( -0.934300899506 ,0) circle ( 0.00938558578491 );
\draw[fill=white] ( -0.950725674629 ,0) circle ( 0.00703918933868 );
\draw[fill=white] ( -0.963044255972 ,0) circle ( 0.00527939200401 );
\draw[fill=white] ( -0.972283191979 ,0) circle ( 0.00395954400301 );
\draw[fill=white] ( -0.979212393984 ,0) circle ( 0.00296965800226 );
\draw[fill=white] ( -0.984409295488 ,0) circle ( 0.00222724350169 );

\draw[fill=white] (0, 0.34375 ) circle ( 0.09375 );
\draw[fill=white] (0, 0.5078125 ) circle ( 0.0703125 );
\draw[fill=white] (0, 0.630859375 ) circle ( 0.052734375 );
\draw[fill=white] (0, 0.72314453125 ) circle ( 0.03955078125 );
\draw[fill=white] (0, 0.792358398438 ) circle ( 0.0296630859375 );
\draw[fill=white] (0, 0.844268798828 ) circle ( 0.0222473144531 );
\draw[fill=white] (0, 0.883201599121 ) circle ( 0.0166854858398 );
\draw[fill=white] (0, 0.912401199341 ) circle ( 0.0125141143799 );
\draw[fill=white] (0, 0.934300899506 ) circle ( 0.00938558578491 );
\draw[fill=white] (0, 0.950725674629 ) circle ( 0.00703918933868 );
\draw[fill=white] (0, 0.963044255972 ) circle ( 0.00527939200401 );
\draw[fill=white] (0, 0.972283191979 ) circle ( 0.00395954400301 );
\draw[fill=white] (0, 0.979212393984 ) circle ( 0.00296965800226 );
\draw[fill=white] (0, 0.984409295488 ) circle ( 0.00222724350169 );

\draw[green,very thick] ( 0, 0.25) arc (-90:90: 0.09375 ) -- ( 0, 0.4375) arc (-90:90: 0.0703125 );
\draw[green,very thick] ( 0, 0.578125 ) arc (-90:90: 0.052734375 );
\draw[green,very thick] ( 0, 0.68359375) arc (-90:90: 0.03955078125 );
\draw[green,very thick] ( 0, 0.7626953125) arc (-90:90: 0.0296630859375 );
\draw[green,thick] ( 0, 0.822021484375) arc (-90:90: 0.0222473144531 );
\draw[green,thick] ( 0, 0.866516113281) arc (-90:90: 0.0166854858398 );
\draw[green,thick] ( 0, 0.899887084961) arc (-90:90: 0.0125141143799 );
\draw[green,thick] ( 0, 0.924915313721) arc (-90:90: 0.00938558578491 );
\draw[green,thick] ( 0, 0.943686485291) arc (-90:90: 0.00703918933868 );
\draw[green] ( 0, 0.957764863968) arc (-90:90: 0.00527939200401 );
\draw[green] ( 0, 0.968323647976) arc (-90:90: 0.00395954400301 );
\draw[green] ( 0, 0.976242735982) arc (-90:90: 0.00296965800226 );
\draw[green] ( 0, 0.982182051986) arc (-90:90: 0.00222724350169 ) -- (0,1);

\draw[fill=white] (0, -0.34375 ) circle ( 0.09375 );
\draw[fill=white] (0, -0.5078125 ) circle ( 0.0703125 );
\draw[fill=white] (0, -0.630859375 ) circle ( 0.052734375 );
\draw[fill=white] (0, -0.72314453125 ) circle ( 0.03955078125 );
\draw[fill=white] (0, -0.792358398438 ) circle ( 0.0296630859375 );
\draw[fill=white] (0, -0.844268798828 ) circle ( 0.0222473144531 );
\draw[fill=white] (0, -0.883201599121 ) circle ( 0.0166854858398 );
\draw[fill=white] (0, -0.912401199341 ) circle ( 0.0125141143799 );
\draw[fill=white] (0, -0.934300899506 ) circle ( 0.00938558578491 );
\draw[fill=white] (0, -0.950725674629 ) circle ( 0.00703918933868 );
\draw[fill=white] (0, -0.963044255972 ) circle ( 0.00527939200401 );
\draw[fill=white] (0, -0.972283191979 ) circle ( 0.00395954400301 );
\draw[fill=white] (0, -0.979212393984 ) circle ( 0.00296965800226 );
\draw[fill=white] (0, -0.984409295488 ) circle ( 0.00222724350169 );

\node (Sm) at (-0.34375,-0.3) {$S^-$};
\draw [->, thin] (Sm.north) -- (Sminus.south);
\node (Sp) at (0.34375,-0.3) {$S^+$};
\draw [->, thin] (Sp.north) -- (Splus.south);

\foreach \an in {-45/16,-45/8,-45/4,-45/2,-45,90,45,45/2,45/4,45/8, 45/16}
    \fill[red] ($(0.34375,0) +((\an:0.09375)$) circle (0.15pt);
\foreach \an in {180-45/16,180-45/8,180-45/4,180-45/2,180-45,270,180+45,180+45/2,180+45/4,180+45/8,180+45/16}
    \fill[red] ($(0.34375,0) +((\an:0.09375)$) circle (0.15pt);
\fill[blue] ($(0.34375,0) +((0:0.09375)$) circle (0.15pt);
\fill[blue] ($(0.34375,0) +((180:0.09375)$) circle (0.15pt);

\foreach \an in {-45/16,-45/8,-45/4,-45/2,-45,90,45,45/2,45/4,45/8, 45/16}
    \fill[red] ($(-0.34375,0) +((\an:0.09375)$) circle (0.15pt);
\foreach \an in {180-45/16,180-45/8,180-45/4,180-45/2,180-45,270,180+45,180+45/2,180+45/4,180+45/8,180+45/16}
    \fill[red] ($(-0.34375,0) +((\an:0.09375)$) circle (0.15pt);
\fill[blue] ($(-0.34375,0) +((0:0.09375)$) circle (0.15pt);
\fill[blue] ($(-0.34375,0) +((180:0.09375)$) circle (0.15pt);

\foreach \an in {-45/16,-45/8,-45/4,-45/2,-45,90,45,45/2,45/4,45/8, 45/16}
    \fill[red] ($(-1/2-1/12,1/2+1/12) +((\an:0.3333333)$) circle (0.5pt);
\node () at ($(-1/2-1/12,1/2+1/12) +((90:0.25)$) {\scriptsize{$p_{0,0}^-$}};
\node () at ($(-1/2-1/12,1/2+1/12) +((45:0.2)$) {\scriptsize{$p_{1,0}^-$}};
\node () at ($(-1/2-1/12,1/2+1/12) +((135:0.2)$) {\scriptsize{$p_{-1,0}^-$}};

\foreach \an in {180-45/16,180-45/8,180-45/4,180-45/2,180-45,270,180+45,180+45/2,180+45/4,180+45/8,180+45/16}
    \fill[red] ($(-1/2-1/12,1/2+1/12) +((\an:0.3333333)$) circle (0.5pt);
\node () at ($(-1/2-1/12,1/2+1/12) +((-90:0.25)$) {\scriptsize{$p_{0,1}^-$}};
\node () at ($(-1/2-1/12,1/2+1/12) +((-45:0.2)$) {\scriptsize{$p_{1,1}^-$}};
\node () at ($(-1/2-1/12,1/2+1/12) +((-135:0.2)$) {\scriptsize{$p_{-1,1}^-$}};

\fill[blue] ($(-1/2-1/12,1/2+1/12) +((0:0.3333333)$) circle (0.5pt);
\node () at ($(-1/2-1/12,1/2+1/12) +((0:0.22)$) {\scriptsize{$p_{\infty}^-$}};

\fill[blue] ($(-1/2-1/12,1/2+1/12) +((180:0.3333333)$) circle (0.5pt);
\node () at ($(-1/2-1/12,1/2+1/12) +((180:0.2)$) {\scriptsize{$p_{-\infty}^-$}};

\node[fill=green!50, minimum width=0.5in, minimum height=0.5in] (T0m) at (1/2,1/2) {$T_0$};
\node [circle,fill=purple,inner sep=1pt] (c1) at (45:0.25) {};
\draw [very thin, -> ] (T0m.south west) -- (c1.north east);
\node [circle,fill=purple,inner sep=0.5pt]  at (T0m.south west) {};

\node [circle,fill=purple,inner sep=1pt] (c2) at (0,1) {};
\draw [very thin, -> ] (T0m.north west) -- (c2.south east);
\node [circle,fill=purple,inner sep=0.5pt]  at (T0m.north west) {};

\node [circle,fill=purple,inner sep=1pt] (c3) at (1,1) {};
\draw [very thin, ->] (T0m.north east) -- (c3.south west);
\node [circle,fill=purple,inner sep=0.5pt]  at (T0m.north east) {};

\node [circle,fill=purple,inner sep=1pt] (c4) at (1,0) {};
\draw [very thin, ->] (T0m.south east) -- (c4.north west);
\node [circle,fill=purple,inner sep=0.5pt]  at (T0m.south east) {};

\node [circle,fill=orange,inner sep=1pt] (m1) at (0.5078125,0.0703125) {};
\draw [very thin, -> ] (T0m.south) -- (m1.north);
\node [circle,fill=orange,inner sep=0.5pt]  at (T0m.south) {};

\node [circle,fill=orange,inner sep=1pt] (m2) at (0.0703125,0.5078125)  {};
\draw [very thin, -> ] (T0m.west) -- (m2.east);
\node [circle,fill=orange,inner sep=0.5pt]  at (T0m.west) {};

\node [circle,fill=orange,inner sep=1pt] (m3) at (1/2,1) {};
\draw [very thin, ->] (T0m.north) -- (m3.south);
\node [circle,fill=orange,inner sep=0.5pt]  at (T0m.north) {};

\node [circle,fill=orange,inner sep=1pt] (m4) at (1,1/2) {};
\draw [very thin, ->] (T0m.east) -- (m4.west);
\node [circle,fill=orange,inner sep=0.5pt]  at (T0m.east) {};

\end{tikzpicture}
\endpgfgraphicnamed
&
\beginpgfgraphicnamed{fig_X2}
\begin{tikzpicture}[scale=2.8]
\draw[fill=gray!20] (-1,-1) rectangle (1,1);

\draw[fill=white] (0,0) circle (0.25);

\draw[fill=white]  ( 0.34375 ,0) circle ( 0.09375 );
\node (Splus) at (0.34375, -0.09375) {};
\draw[fill=white] ( 0.5078125 ,0) circle ( 0.0703125 );
\draw[fill=white] ( 0.630859375 ,0) circle ( 0.052734375 );
\draw[fill=white] ( 0.72314453125 ,0) circle ( 0.03955078125 );
\draw[fill=white] ( 0.792358398438 ,0) circle ( 0.0296630859375 );
\draw[fill=white] ( 0.844268798828 ,0) circle ( 0.0222473144531 );
\draw[fill=white] ( 0.883201599121 ,0) circle ( 0.0166854858398 );
\draw[fill=white] ( 0.912401199341 ,0) circle ( 0.0125141143799 );
\draw[fill=white] ( 0.934300899506 ,0) circle ( 0.00938558578491 );
\draw[fill=white] ( 0.950725674629 ,0) circle ( 0.00703918933868 );
\draw[fill=white] ( 0.963044255972 ,0) circle ( 0.00527939200401 );
\draw[fill=white] ( 0.972283191979 ,0) circle ( 0.00395954400301 );
\draw[fill=white] ( 0.979212393984 ,0) circle ( 0.00296965800226 );
\draw[fill=white] ( 0.984409295488 ,0) circle ( 0.00222724350169 );

\draw[fill=white] ( -0.34375 ,0) circle ( 0.09375 );
\node (Sminus) at (-0.34375, -0.09375) {};
\draw[fill=white] ( -0.5078125 ,0) circle ( 0.0703125 );
\draw[fill=white] ( -0.630859375 ,0) circle ( 0.052734375 );
\draw[fill=white] ( -0.72314453125 ,0) circle ( 0.03955078125 );
\draw[fill=white] ( -0.792358398438 ,0) circle ( 0.0296630859375 );
\draw[fill=white] ( -0.844268798828 ,0) circle ( 0.0222473144531 );
\draw[fill=white] ( -0.883201599121 ,0) circle ( 0.0166854858398 );
\draw[fill=white] ( -0.912401199341 ,0) circle ( 0.0125141143799 );
\draw[fill=white] ( -0.934300899506 ,0) circle ( 0.00938558578491 );
\draw[fill=white] ( -0.950725674629 ,0) circle ( 0.00703918933868 );
\draw[fill=white] ( -0.963044255972 ,0) circle ( 0.00527939200401 );
\draw[fill=white] ( -0.972283191979 ,0) circle ( 0.00395954400301 );
\draw[fill=white] ( -0.979212393984 ,0) circle ( 0.00296965800226 );
\draw[fill=white] ( -0.984409295488 ,0) circle ( 0.00222724350169 );

\draw[fill=white] (0, 0.34375 ) circle ( 0.09375 );
\draw[fill=white] (0, 0.5078125 ) circle ( 0.0703125 );
\draw[fill=white] (0, 0.630859375 ) circle ( 0.052734375 );
\draw[fill=white] (0, 0.72314453125 ) circle ( 0.03955078125 );
\draw[fill=white] (0, 0.792358398438 ) circle ( 0.0296630859375 );
\draw[fill=white] (0, 0.844268798828 ) circle ( 0.0222473144531 );
\draw[fill=white] (0, 0.883201599121 ) circle ( 0.0166854858398 );
\draw[fill=white] (0, 0.912401199341 ) circle ( 0.0125141143799 );
\draw[fill=white] (0, 0.934300899506 ) circle ( 0.00938558578491 );
\draw[fill=white] (0, 0.950725674629 ) circle ( 0.00703918933868 );
\draw[fill=white] (0, 0.963044255972 ) circle ( 0.00527939200401 );
\draw[fill=white] (0, 0.972283191979 ) circle ( 0.00395954400301 );
\draw[fill=white] (0, 0.979212393984 ) circle ( 0.00296965800226 );
\draw[fill=white] (0, 0.984409295488 ) circle ( 0.00222724350169 );

\draw[fill=white] (0, -0.34375 ) circle ( 0.09375 );
\draw[fill=white] (0, -0.5078125 ) circle ( 0.0703125 );
\draw[fill=white] (0, -0.630859375 ) circle ( 0.052734375 );
\draw[fill=white] (0, -0.72314453125 ) circle ( 0.03955078125 );
\draw[fill=white] (0, -0.792358398438 ) circle ( 0.0296630859375 );
\draw[fill=white] (0, -0.844268798828 ) circle ( 0.0222473144531 );
\draw[fill=white] (0, -0.883201599121 ) circle ( 0.0166854858398 );
\draw[fill=white] (0, -0.912401199341 ) circle ( 0.0125141143799 );
\draw[fill=white] (0, -0.934300899506 ) circle ( 0.00938558578491 );
\draw[fill=white] (0, -0.950725674629 ) circle ( 0.00703918933868 );
\draw[fill=white] (0, -0.963044255972 ) circle ( 0.00527939200401 );
\draw[fill=white] (0, -0.972283191979 ) circle ( 0.00395954400301 );
\draw[fill=white] (0, -0.979212393984 ) circle ( 0.00296965800226 );
\draw[fill=white] (0, -0.984409295488 ) circle ( 0.00222724350169 );

\draw[fill=white] ( 0.5 , 0.54202 ) circle ( 0.125 );
\draw[fill=white] ( 0.558784712134 , 0.380510330802 ) circle ( 0.046875 );
\draw[fill=white] ( 0.580015961928 , 0.301274227865 ) circle ( 0.03515625 );
\draw[fill=white] ( 0.585378082819 , 0.239984905619 ) circle ( 0.0263671875 );
\draw[fill=white] ( 0.585378082819 , 0.193842327494 ) circle ( 0.019775390625 );
\draw[fill=white] ( 0.579368651866 , 0.159761150983 ) circle ( 0.0148315429688 );
\draw[fill=white] ( 0.568399510276 , 0.136237750932 ) circle ( 0.0111236572266 );
\draw[fill=white] ( 0.558666310203 , 0.119379353884 ) circle ( 0.00834274291992 );
\draw[fill=white] ( 0.550292208885 , 0.107419897778 ) circle ( 0.00625705718994 );
\draw[fill=white] ( 0.543253780924 , 0.0990318259697 ) circle ( 0.00469279289246 );
\draw[fill=white] ( 0.539147587143 , 0.0919196897154 ) circle ( 0.00351959466934 );
\draw[fill=white] ( 0.536067941807 , 0.0865855875247 ) circle ( 0.00263969600201 );
\draw[fill=white] ( 0.533758207806 , 0.0825850108817 ) circle ( 0.0019797720015 );
\draw[fill=white] ( 0.532025907304 , 0.0795845783995 ) circle ( 0.00148482900113 ); 

\draw[fill=white] ( 0.671875 , 0.54202 ) circle ( 0.046875 );
\draw[fill=white] ( 0.748959160299 , 0.570076339882 ) circle ( 0.03515625 );
\draw[fill=white] ( 0.809547918541 , 0.580759772688 ) circle ( 0.0263671875 );
\draw[fill=white] ( 0.855690496666 , 0.580759772688 ) circle ( 0.019775390625 );
\draw[fill=white] ( 0.889771673177 , 0.574750341735 ) circle ( 0.0148315429688 );
\draw[fill=white] ( 0.914161583271 , 0.565873140444 ) circle ( 0.0111236572266 );
\draw[fill=white] ( 0.932454015842 , 0.559215239476 ) circle ( 0.00834274291992 );
\draw[fill=white] ( 0.946832012183 , 0.556680010792 ) circle ( 0.00625705718994 );
\draw[fill=white] ( 0.957615509438 , 0.55477858928 ) circle ( 0.00469279289246 );
\draw[fill=white] ( 0.96570313238 , 0.553352523145 ) circle ( 0.00351959466934 );
\draw[fill=white] ( 0.971768849586 , 0.552282973545 ) circle ( 0.00263969600201 );
\draw[fill=white] ( 0.976318137491 , 0.551480811344 ) circle ( 0.0019797720015 );
\draw[fill=white] ( 0.979730103419 , 0.550879189694 ) circle ( 0.00148482900113 );

\draw[fill=white] ( 0.338490330802 , 0.600804712134 ) circle ( 0.046875 );
\draw[fill=white] ( 0.261406170504 , 0.628861052016 ) circle ( 0.03515625 );
\draw[fill=white] ( 0.2081253107 , 0.598099333266 ) circle ( 0.0263671875 );
\draw[fill=white] ( 0.178465433203 , 0.562752067702 ) circle ( 0.019775390625 );
\draw[fill=white] ( 0.161161966406 , 0.532781584063 ) circle ( 0.0148315429688 );
\draw[fill=white] ( 0.148184366308 , 0.510303721334 ) circle ( 0.0111236572266 );
\draw[fill=white] ( 0.133272238648 , 0.497790960514 ) circle ( 0.00834274291992 );
\draw[fill=white] ( 0.118672438539 , 0.497790960514 ) circle ( 0.00625705718994 );
\draw[fill=white] ( 0.108382945217 , 0.501536029809 ) circle ( 0.00469279289246 );
\draw[fill=white] ( 0.100665825227 , 0.50434483178 ) circle ( 0.00351959466934 );
\draw[fill=white] ( 0.0953317230359 , 0.507424477116 ) circle ( 0.00263969600201 );
\draw[fill=white] ( 0.0913311463929 , 0.509734211117 ) circle ( 0.0019797720015 );
\draw[fill=white] ( 0.0879191804644 , 0.510335832768 ) circle ( 0.00148482900113 );

\draw[fill=white] ( 0.529845780537 , 0.711283832549 ) circle ( 0.046875 );
\draw[fill=white] ( 0.488830155537 , 0.782324978953 ) circle ( 0.03515625 );
\draw[fill=white] ( 0.519591874287 , 0.835605838756 ) circle ( 0.0263671875 );
\draw[fill=white] ( 0.496520585224 , 0.875566483609 ) circle ( 0.019775390625 );
\draw[fill=white] ( 0.474275677101 , 0.902076932782 ) circle ( 0.0148315429688 );
\draw[fill=white] ( 0.46539847581 , 0.926466842876 ) circle ( 0.0111236572266 );
\draw[fill=white] ( 0.455665275737 , 0.943325239923 ) circle ( 0.00834274291992 );
\draw[fill=white] ( 0.453130047053 , 0.957703236264 ) circle ( 0.00625705718994 );
\draw[fill=white] ( 0.453130047053 , 0.968653086346 ) circle ( 0.00469279289246 );
\draw[fill=white] ( 0.454556113188 , 0.976740709288 ) circle ( 0.00351959466934 );
\draw[fill=white] ( 0.454556113188 , 0.982899999959 ) circle ( 0.00263969600201 );
\draw[fill=white] ( 0.453753950987 , 0.987449287864 ) circle ( 0.0019797720015 );
\draw[fill=white] ( 0.452568987656 , 0.99070494786 ) circle ( 0.00148482900113 );

\draw[fill=white] ( -0.5 , 0.54202 ) circle ( 0.125 );
\draw[fill=white] ( -0.558784712134 , 0.380510330802 ) circle ( 0.046875 );
\draw[fill=white] ( -0.580015961928 , 0.301274227865 ) circle ( 0.03515625 );
\draw[fill=white] ( -0.585378082819 , 0.239984905619 ) circle ( 0.0263671875 );
\draw[fill=white] ( -0.585378082819 , 0.193842327494 ) circle ( 0.019775390625 );
\draw[fill=white] ( -0.579368651866 , 0.159761150983 ) circle ( 0.0148315429688 );
\draw[fill=white] ( -0.568399510276 , 0.136237750932 ) circle ( 0.0111236572266 );
\draw[fill=white] ( -0.558666310203 , 0.119379353884 ) circle ( 0.00834274291992 );
\draw[fill=white] ( -0.550292208885 , 0.107419897778 ) circle ( 0.00625705718994 );
\draw[fill=white] ( -0.543253780924 , 0.0990318259697 ) circle ( 0.00469279289246 );
\draw[fill=white] ( -0.539147587143 , 0.0919196897154 ) circle ( 0.00351959466934 );
\draw[fill=white] ( -0.536067941807 , 0.0865855875247 ) circle ( 0.00263969600201 );
\draw[fill=white] ( -0.533758207806 , 0.0825850108817 ) circle ( 0.0019797720015 );
\draw[fill=white] ( -0.532025907304 , 0.0795845783995 ) circle ( 0.00148482900113 );

\draw[fill=white] ( -0.671875 , 0.54202 ) circle ( 0.046875 );
\draw[fill=white] ( -0.748959160299 , 0.570076339882 ) circle ( 0.03515625 );
\draw[fill=white] ( -0.809547918541 , 0.580759772688 ) circle ( 0.0263671875 );
\draw[fill=white] ( -0.855690496666 , 0.580759772688 ) circle ( 0.019775390625 );
\draw[fill=white] ( -0.889771673177 , 0.574750341735 ) circle ( 0.0148315429688 );
\draw[fill=white] ( -0.914161583271 , 0.565873140444 ) circle ( 0.0111236572266 );
\draw[fill=white] ( -0.932454015842 , 0.559215239476 ) circle ( 0.00834274291992 );
\draw[fill=white] ( -0.946832012183 , 0.556680010792 ) circle ( 0.00625705718994 );
\draw[fill=white] ( -0.957615509438 , 0.55477858928 ) circle ( 0.00469279289246 );
\draw[fill=white] ( -0.96570313238 , 0.553352523145 ) circle ( 0.00351959466934 );
\draw[fill=white] ( -0.971768849586 , 0.552282973545 ) circle ( 0.00263969600201 );
\draw[fill=white] ( -0.976318137491 , 0.551480811344 ) circle ( 0.0019797720015 );
\draw[fill=white] ( -0.979730103419 , 0.550879189694 ) circle ( 0.00148482900113 );

\draw[fill=white] ( -0.338490330802 , 0.600804712134 ) circle ( 0.046875 );
\draw[fill=white] ( -0.261406170504 , 0.628861052016 ) circle ( 0.03515625 );
\draw[fill=white] ( -0.2081253107 , 0.598099333266 ) circle ( 0.0263671875 );
\draw[fill=white] ( -0.178465433203 , 0.562752067702 ) circle ( 0.019775390625 );
\draw[fill=white] ( -0.161161966406 , 0.532781584063 ) circle ( 0.0148315429688 );
\draw[fill=white] ( -0.148184366308 , 0.510303721334 ) circle ( 0.0111236572266 );
\draw[fill=white] ( -0.133272238648 , 0.497790960514 ) circle ( 0.00834274291992 );
\draw[fill=white] ( -0.118672438539 , 0.497790960514 ) circle ( 0.00625705718994 );
\draw[fill=white] ( -0.108382945217 , 0.501536029809 ) circle ( 0.00469279289246 );
\draw[fill=white] ( -0.100665825227 , 0.50434483178 ) circle ( 0.00351959466934 );
\draw[fill=white] ( -0.0953317230359 , 0.507424477116 ) circle ( 0.00263969600201 );
\draw[fill=white] ( -0.0913311463929 , 0.509734211117 ) circle ( 0.0019797720015 );
\draw[fill=white] ( -0.0879191804644 , 0.510335832768 ) circle ( 0.00148482900113 );

\draw[fill=white] ( -0.529845780537 , 0.711283832549 ) circle ( 0.046875 );
\draw[fill=white] ( -0.488830155537 , 0.782324978953 ) circle ( 0.03515625 );
\draw[fill=white] ( -0.519591874287 , 0.835605838756 ) circle ( 0.0263671875 );
\draw[fill=white] ( -0.496520585224 , 0.875566483609 ) circle ( 0.019775390625 );
\draw[fill=white] ( -0.474275677101 , 0.902076932782 ) circle ( 0.0148315429688 );
\draw[fill=white] ( -0.46539847581 , 0.926466842876 ) circle ( 0.0111236572266 );
\draw[fill=white] ( -0.455665275737 , 0.943325239923 ) circle ( 0.00834274291992 );
\draw[fill=white] ( -0.453130047053 , 0.957703236264 ) circle ( 0.00625705718994 );
\draw[fill=white] ( -0.453130047053 , 0.968653086346 ) circle ( 0.00469279289246 );
\draw[fill=white] ( -0.454556113188 , 0.976740709288 ) circle ( 0.00351959466934 );
\draw[fill=white] ( -0.454556113188 , 0.982899999959 ) circle ( 0.00263969600201 );
\draw[fill=white] ( -0.453753950987 , 0.987449287864 ) circle ( 0.0019797720015 );
\draw[fill=white] ( -0.452568987656 , 0.99070494786 ) circle ( 0.00148482900113 );

\node[fill=green!50] () at (3/4+1/16,3/4+1/16) {$R_{0,0}$};
\node[fill=green!50] () at (3/4+1/16,1/4+1/16) {$R_{0,3}$};
\node[fill=green!50] () at (1/4,3/4+1/16) {$R_{0,1}$};
\node[fill=green!50] () at (1/4+1/16,1/4+1/16) {$R_{0,2}$};

\draw[fill=white] ( -0.5 , -0.54202 ) circle ( 0.125 );
\draw[fill=white] ( -0.558784712134 , -0.380510330802 ) circle ( 0.046875 );
\draw[fill=white] ( -0.580015961928 , -0.301274227865 ) circle ( 0.03515625 );
\draw[fill=white] ( -0.585378082819 , -0.239984905619 ) circle ( 0.0263671875 );
\draw[fill=white] ( -0.585378082819 , -0.193842327494 ) circle ( 0.019775390625 );
\draw[fill=white] ( -0.579368651866 , -0.159761150983 ) circle ( 0.0148315429688 );
\draw[fill=white] ( -0.568399510276 , -0.136237750932 ) circle ( 0.0111236572266 );
\draw[fill=white] ( -0.558666310203 , -0.119379353884 ) circle ( 0.00834274291992 );
\draw[fill=white] ( -0.550292208885 , -0.107419897778 ) circle ( 0.00625705718994 );
\draw[fill=white] ( -0.543253780924 , -0.0990318259697 ) circle ( 0.00469279289246 );
\draw[fill=white] ( -0.539147587143 , -0.0919196897154 ) circle ( 0.00351959466934 );
\draw[fill=white] ( -0.536067941807 , -0.0865855875247 ) circle ( 0.00263969600201 );
\draw[fill=white] ( -0.533758207806 , -0.0825850108817 ) circle ( 0.0019797720015 );
\draw[fill=white] ( -0.532025907304 , -0.0795845783995 ) circle ( 0.00148482900113 );

\draw[fill=white] ( -0.671875 , -0.54202 ) circle ( 0.046875 );
\draw[fill=white] ( -0.748959160299 , -0.570076339882 ) circle ( 0.03515625 );
\draw[fill=white] ( -0.809547918541 , -0.580759772688 ) circle ( 0.0263671875 );
\draw[fill=white] ( -0.855690496666 , -0.580759772688 ) circle ( 0.019775390625 );
\draw[fill=white] ( -0.889771673177 , -0.574750341735 ) circle ( 0.0148315429688 );
\draw[fill=white] ( -0.914161583271 , -0.565873140444 ) circle ( 0.0111236572266 );
\draw[fill=white] ( -0.932454015842 , -0.559215239476 ) circle ( 0.00834274291992 );
\draw[fill=white] ( -0.946832012183 , -0.556680010792 ) circle ( 0.00625705718994 );
\draw[fill=white] ( -0.957615509438 , -0.55477858928 ) circle ( 0.00469279289246 );
\draw[fill=white] ( -0.96570313238 , -0.553352523145 ) circle ( 0.00351959466934 );
\draw[fill=white] ( -0.971768849586 , -0.552282973545 ) circle ( 0.00263969600201 );
\draw[fill=white] ( -0.976318137491 , -0.551480811344 ) circle ( 0.0019797720015 );
\draw[fill=white] ( -0.979730103419 , -0.550879189694 ) circle ( 0.00148482900113 );

\draw[fill=white] ( -0.338490330802 , -0.600804712134 ) circle ( 0.046875 );
\draw[fill=white] ( -0.261406170504 , -0.628861052016 ) circle ( 0.03515625 );
\draw[fill=white] ( -0.2081253107 , -0.598099333266 ) circle ( 0.0263671875 );
\draw[fill=white] ( -0.178465433203 , -0.562752067702 ) circle ( 0.019775390625 );
\draw[fill=white] ( -0.161161966406 , -0.532781584063 ) circle ( 0.0148315429688 );
\draw[fill=white] ( -0.148184366308 , -0.510303721334 ) circle ( 0.0111236572266 );
\draw[fill=white] ( -0.133272238648 , -0.497790960514 ) circle ( 0.00834274291992 );
\draw[fill=white] ( -0.118672438539 , -0.497790960514 ) circle ( 0.00625705718994 );
\draw[fill=white] ( -0.108382945217 , -0.501536029809 ) circle ( 0.00469279289246 );
\draw[fill=white] ( -0.100665825227 , -0.50434483178 ) circle ( 0.00351959466934 );
\draw[fill=white] ( -0.0953317230359 , -0.507424477116 ) circle ( 0.00263969600201 );
\draw[fill=white] ( -0.0913311463929 , -0.509734211117 ) circle ( 0.0019797720015 );
\draw[fill=white] ( -0.0879191804644 , -0.510335832768 ) circle ( 0.00148482900113 );

\draw[fill=white] ( -0.529845780537 , -0.711283832549 ) circle ( 0.046875 );
\draw[fill=white] ( -0.488830155537 , -0.782324978953 ) circle ( 0.03515625 );
\draw[fill=white] ( -0.519591874287 , -0.835605838756 ) circle ( 0.0263671875 );
\draw[fill=white] ( -0.496520585224 , -0.875566483609 ) circle ( 0.019775390625 );
\draw[fill=white] ( -0.474275677101 , -0.902076932782 ) circle ( 0.0148315429688 );
\draw[fill=white] ( -0.46539847581 , -0.926466842876 ) circle ( 0.0111236572266 );
\draw[fill=white] ( -0.455665275737 , -0.943325239923 ) circle ( 0.00834274291992 );
\draw[fill=white] ( -0.453130047053 , -0.957703236264 ) circle ( 0.00625705718994 );
\draw[fill=white] ( -0.453130047053 , -0.968653086346 ) circle ( 0.00469279289246 );
\draw[fill=white] ( -0.454556113188 , -0.976740709288 ) circle ( 0.00351959466934 );
\draw[fill=white] ( -0.454556113188 , -0.982899999959 ) circle ( 0.00263969600201 );
\draw[fill=white] ( -0.453753950987 , -0.987449287864 ) circle ( 0.0019797720015 );
\draw[fill=white] ( -0.452568987656 , -0.99070494786 ) circle ( 0.00148482900113 );

\draw[fill=white] ( 0.5 , -0.54202 ) circle ( 0.125 );
\draw[fill=white] ( 0.558784712134 , -0.380510330802 ) circle ( 0.046875 );
\draw[fill=white] ( 0.580015961928 , -0.301274227865 ) circle ( 0.03515625 );
\draw[fill=white] ( 0.585378082819 , -0.239984905619 ) circle ( 0.0263671875 );
\draw[fill=white] ( 0.585378082819 , -0.193842327494 ) circle ( 0.019775390625 );
\draw[fill=white] ( 0.579368651866 , -0.159761150983 ) circle ( 0.0148315429688 );
\draw[fill=white] ( 0.568399510276 , -0.136237750932 ) circle ( 0.0111236572266 );
\draw[fill=white] ( 0.558666310203 , -0.119379353884 ) circle ( 0.00834274291992 );
\draw[fill=white] ( 0.550292208885 , -0.107419897778 ) circle ( 0.00625705718994 );
\draw[fill=white] ( 0.543253780924 , -0.0990318259697 ) circle ( 0.00469279289246 );
\draw[fill=white] ( 0.539147587143 , -0.0919196897154 ) circle ( 0.00351959466934 );
\draw[fill=white] ( 0.536067941807 , -0.0865855875247 ) circle ( 0.00263969600201 );
\draw[fill=white] ( 0.533758207806 , -0.0825850108817 ) circle ( 0.0019797720015 );
\draw[fill=white] ( 0.532025907304 , -0.0795845783995 ) circle ( 0.00148482900113 ); 

\draw[fill=white] ( 0.671875 , -0.54202 ) circle ( 0.046875 );
\draw[fill=white] ( 0.748959160299 , -0.570076339882 ) circle ( 0.03515625 );
\draw[fill=white] ( 0.809547918541 , -0.580759772688 ) circle ( 0.0263671875 );
\draw[fill=white] ( 0.855690496666 , -0.580759772688 ) circle ( 0.019775390625 );
\draw[fill=white] ( 0.889771673177 , -0.574750341735 ) circle ( 0.0148315429688 );
\draw[fill=white] ( 0.914161583271 , -0.565873140444 ) circle ( 0.0111236572266 );
\draw[fill=white] ( 0.932454015842 , -0.559215239476 ) circle ( 0.00834274291992 );
\draw[fill=white] ( 0.946832012183 , -0.556680010792 ) circle ( 0.00625705718994 );
\draw[fill=white] ( 0.957615509438 , -0.55477858928 ) circle ( 0.00469279289246 );
\draw[fill=white] ( 0.96570313238 , -0.553352523145 ) circle ( 0.00351959466934 );
\draw[fill=white] ( 0.971768849586 , -0.552282973545 ) circle ( 0.00263969600201 );
\draw[fill=white] ( 0.976318137491 , -0.551480811344 ) circle ( 0.0019797720015 );
\draw[fill=white] ( 0.979730103419 , -0.550879189694 ) circle ( 0.00148482900113 );

\draw[fill=white] ( 0.338490330802 , -0.600804712134 ) circle ( 0.046875 );
\draw[fill=white] ( 0.261406170504 , -0.628861052016 ) circle ( 0.03515625 );
\draw[fill=white] ( 0.2081253107 , -0.598099333266 ) circle ( 0.0263671875 );
\draw[fill=white] ( 0.178465433203 , -0.562752067702 ) circle ( 0.019775390625 );
\draw[fill=white] ( 0.161161966406 , -0.532781584063 ) circle ( 0.0148315429688 );
\draw[fill=white] ( 0.148184366308 , -0.510303721334 ) circle ( 0.0111236572266 );
\draw[fill=white] ( 0.133272238648 , -0.497790960514 ) circle ( 0.00834274291992 );
\draw[fill=white] ( 0.118672438539 , -0.497790960514 ) circle ( 0.00625705718994 );
\draw[fill=white] ( 0.108382945217 , -0.501536029809 ) circle ( 0.00469279289246 );
\draw[fill=white] ( 0.100665825227 , -0.50434483178 ) circle ( 0.00351959466934 );
\draw[fill=white] ( 0.0953317230359 , -0.507424477116 ) circle ( 0.00263969600201 );
\draw[fill=white] ( 0.0913311463929 , -0.509734211117 ) circle ( 0.0019797720015 );
\draw[fill=white] ( 0.0879191804644 , -0.510335832768 ) circle ( 0.00148482900113 );

\draw[fill=white] ( 0.529845780537 , -0.711283832549 ) circle ( 0.046875 );
\draw[fill=white] ( 0.488830155537 , -0.782324978953 ) circle ( 0.03515625 );
\draw[fill=white] ( 0.519591874287 , -0.835605838756 ) circle ( 0.0263671875 );
\draw[fill=white] ( 0.496520585224 , -0.875566483609 ) circle ( 0.019775390625 );
\draw[fill=white] ( 0.474275677101 , -0.902076932782 ) circle ( 0.0148315429688 );
\draw[fill=white] ( 0.46539847581 , -0.926466842876 ) circle ( 0.0111236572266 );
\draw[fill=white] ( 0.455665275737 , -0.943325239923 ) circle ( 0.00834274291992 );
\draw[fill=white] ( 0.453130047053 , -0.957703236264 ) circle ( 0.00625705718994 );
\draw[fill=white] ( 0.453130047053 , -0.968653086346 ) circle ( 0.00469279289246 );
\draw[fill=white] ( 0.454556113188 , -0.976740709288 ) circle ( 0.00351959466934 );
\draw[fill=white] ( 0.454556113188 , -0.982899999959 ) circle ( 0.00263969600201 );
\draw[fill=white] ( 0.453753950987 , -0.987449287864 ) circle ( 0.0019797720015 );
\draw[fill=white] ( 0.452568987656 , -0.99070494786 ) circle ( 0.00148482900113 );

\end{tikzpicture}
\endpgfgraphicnamed
 \\
$T$.
&
$X_2$.
\end{tabular}
\end{center}

Let $X_1=T$, $R_{(i)}=T_i$ and $S_1=S$. Let $h_{(0)}$ be a homeomorphism of $[-1,+1]^2$ with $T_0$ carrying top--right corner to top--right corner etc, and midpoints to midpoints. Let $h_{(i)}= \theta^i \circ h_{(0)}$ for $i=1,2,3$.

Inductively, suppose we have continuum $X_n$, geometric boundary $S_n$, and for each $\sigma \in \Sigma_n =\{0,1,2,3\}^n$ a rectangle $R_\sigma$ and a homeomorphism $h_\sigma$ of $[-1,+1]^2$ with $R_\sigma$. Fix a $\sigma$ for a moment. Then $R_\sigma$ has four subrectangles $h_\sigma(T_i)$. For $i=0,1,2,3$ let $h_{\sigma \smallfrown i}$ be a homeomorphism of $[-1,+1]^2$ with $h_\sigma (T_i)$ taking corners to corners etcetera. Let $R_{\sigma \smallfrown i} = h_{\sigma} (T_i) = R_\sigma \setminus h_{\sigma \smallfrown i} (U)$. Let $X_{n+1}= \bigcup_{\sigma \in \Sigma_n} \bigcup_{i=0}^3 R_{\sigma \smallfrown i} = \bigcup_{\sigma \in \Sigma_{n+1}} R_\sigma$. Let $S_{n+1}$ be the natural geometric boundary. Note that $X_{n+1} \subseteq X_n$.

Let $X=\bigcap_n X_n$. Then $X$ is a variant of Charatonik's description of Urysohn's locally connected, rational continuum in which every point has countably infinite order, see \cite{Chara}. Thus $X$ is rational (and locally connected). Since it is planar it is not $4$-sac. Note that each $R_\sigma \cap X$  has a countable boundary contained in the sides of $R_\sigma$. Call a side of $R_\sigma$ \emph{finite} if it contains only finitely many boundary points. A side containing infinitely many boundary points, is said to be \emph{infinite}.

For each $n$ and $\sigma$ in $\Sigma_n$, there are two circles, $h_\sigma(S^-)$ and $h_\sigma (S^+)$. Identify, for all $m \in \mathbb{Z}\cup\{\pm \infty\}$ and $i \in \{0,1\}$, the points $h_\sigma (p_{-m,i}^-)$ and $h_\sigma (p_{m,i}^+)$ (creating a `rational bridge' between the circles).  Note that the diameters of the circles shrink to zero with $n$. It follows that the resulting quotient space, $Y$, is a locally connected, rational continuum. We show that $Y$ is $\omega$-sac.

Fix distinct points $x_1, \ldots , x_n$ in $Y$. The diameters of the rectangles, $R_\sigma$ for $\sigma \in \Sigma_m$, shrink to zero with $m$, so we can find an $N$ such that if $i \ne j$, $x_i \in R_\sigma$, and $x_j \in R_\tau$ where $\sigma, \tau \in \Sigma_N$ then $R_\sigma$ and $R_\tau$ are disjoint. For each $i$, let $R_i$ be the unique $R_\sigma$ containing $x_i$.

Subdivide the square $r=(-1,+1)^2$ into four subsquares $r_{(i)}=\theta^i \left([0,1]^2 \right) \cap r$. And continue subdividing to get a final subdivision of $(-1,+1)^2$ into subsquares $r_\sigma$ for $\sigma \in \Sigma_N$. Note that two squares $r_\sigma$ and $r_\tau$ are adjacent if and only if the corresponding rectangles $R_\sigma$ and $R_\tau$ are adjacent. For each $\sigma$ in $\Sigma_N$, consider $R_\sigma$. It has four sides, at most two are finite sides. For each finite side remove the line segment in $r$ which is the corresponding side in $r_\sigma$. The result $r'$ is an open, connected subset  of the plane. It follows that $r'$ is $\omega$-sac. Hence there is an arc $\alpha'$ which visits the interior of the squares $r_i$ in order: $r_1, r_2, \ldots , r_n$ (indeed we can suppose $\alpha'$ visits the centers of the $r_i$ in turn). Further, we can suppose that $\alpha'$ consists of a finite union of horizontal or vertical line segments of the form $\{p/q\} \times J$ or $J \times \{p/q\}$ where $p \in \mathbb{Z}$, $q \in \mathbb{N}$ and $J$ is a closed interval. Let $M$ be a common denominator of all the denominators ($q$s) used. Then $\alpha'$ is an arc on the grid $r' \cap \left( \left(\bigcup_{p \in \mathbb{Z}} \{p/M\} \times \mathbb{R}\right) \cup \left(\bigcup_{p \in \mathbb{Z}} \mathbb{R} \times \{p/M\} \right) \right)$.

Consider $X_1$. There is a connected chain of circles, $V_0$, in $X_1$ from the bottom edge to the top, and a connected chain of circles, $H_0$, from the left side to the right. Note that $V_0$ and $H_0$ are in $X$. Now consider $X_2$. There is a connected chain of circles to the right of $V_0$ from the top edge to a circle in $H_0$, and another to the right of $V_0$ from the bottom edge to a circle in $H_0$. By construction, both chains end at the {\sl same circle} of $H_0$. Call the union of these two chains, along with the circle they connect to, $V_1$. It is a vertical connected chain of circles from the top edge to the bottom. Similarly, there is a vertical connected chain of circles, $V_{-1}$ from the top edge to the bottom, lying to the left of $V_0$. Further there are two horizontal connected chains of circles, $H_{1}$ and $H_{-1}$, above and, respectively, below, $H_0$. Observe that $V_0$ and $V_{\pm 1}$ are disjoint, as are $H_0$ and $H_{\pm 1}$. Together these six chains form a three--by--three `grid' in $X$.
Repeating, we find a `grid' of horizontal, $H_{\pm n}$ and vertical, $V_{\pm n}$, chains, all in $X$, where the $H_{\pm n}$ converges to the left and right  sides, $\{\pm 1\} \times [-1,1]$ and $V_{\pm n}$ converges to the top and bottom edges $[-1,1] \times \{\pm 1\}$.    

Now consider a rectangle $R_\sigma$ for some $\sigma$ in $\Sigma_N$. It has at least two `infinite' sides. For concreteness let us suppose that the bottom and right sides of $R_\sigma$ are infinite, with the limit point on the bottom edge being to the right, and the limit along the right edge being at the top (all other cases are very similar). The vertical chains, $V_n$ in $X$, for $n \in \mathbb{N}$, have analogues in $R_\sigma$. By construction, each $V_n$ meets the bottom edge of $R_\sigma$ in an arc of a circle whose ends are points in the $R_\tau$ `below' $R_\sigma$. Extend $V_n$ to include this arc. Repeat at the top edge, if it is infinite. Apply the same procedure to the horizontal chains, $H_n$ for $n \in \mathbb{N}$. The horizontal chains, and respectively the vertical chains, remain disjoint. Note that, by construction, if $R_\tau$ is the rectangle `below' $R_\sigma$, then the $n$th vertical chain in $R_\sigma$ connects to the $n$th vertical chain in $R_\tau$ (and similarly for the rectangle to the right of $R_\sigma$). If $x_i$ is in  $R_\sigma$ but not on the geometric boundary of $R_\sigma$, then let $P_i$ be sufficiently large that $x_i$ is to the left of $V_{P_i}$ and below $H_{P_i}$. 

Now let $P$ be the maximum of the $P_i$. Return to an individual rectangle, $R_\sigma$, as in the previous paragraph. Take the union of the vertical chains, $V_P, \ldots , V_{P+M}$, and the horizontal chains, $H_P, \ldots , H_{P+M}$. Take the union now over all $\sigma$ in $\Sigma_N$. This gives a `grid', $G$,  naturally containing an isomorphic copy of the grid $G'$ in $r'$. Think of the grid, $G'$, as a graph, and $\alpha'$ as an edge arc in this graph. Then we can realize the arc $\alpha'$ in $G'$ as a connected chain of circles in $X$. Evidently (by traveling along the `top' or `bottom' edges of the circles in the union) we can extract an arc, $\alpha_\ast$, contained in this union. The arc $\alpha_\ast$ visits the $R_i$ in order. Note that $\alpha_\ast$ is not (necessarily) an arc in $Y$, but it can easily be modified to be so, call this arc, $\alpha_0$.

To complete the proof,   we modify $\alpha_0$, to another arc $\alpha$ in $Y$,  which visits the points $x_i$ in order. As $\alpha_0$ visits the $R_i$ in turn, there are subarcs $\beta_i$ of $\alpha_0$, where $\beta_i$ comes before $\beta_j$ if $i < j$, such that $\beta_i$ crosses from one infinite edge of $R_i$ to another (along the `grid' $G$ inside $R_i$). We will replace $\beta_i$ in $\alpha_0$ by another subarc, visiting $x_i$, contained inside $R_i$, with the same start and end as $\beta_i$, but otherwise disjoint from the `grid' $G$. Doing this for all $i$, gives the arc $\alpha$ in $Y$.

Fix $i$. Again for concreteness, orient $R_i=R_\sigma$ as above. Suppose that $\beta_i$ enters $R_i$ at $y_i$, a point on the bottom edge, and exits at $z_i$, a point on the right edge. Pick $Q$ in $\mathbb{N}$ sufficiently large that, $V_Q$ is to the right of the rightmost vertical chain in $G \cap R_i$, above the highest horizontal chain in $G \cap R_i$ (i.e. $Q > P+M$) and if $x_i$ is not on the geometric boundary of $R_i$, the  vertical chain $V_{-Q}$ is to the left of $x_i$ and the horizontal chain $H_{-Q}$ is below $x_i$. The union of $V_{\pm Q}$ and $H_{\pm Q}$ contains an obvious `ring', a connected cycle of circles, just interior to the geometric boundary of $R_i$. Observe that this ring meets each arc component of $\alpha_0 \cap R_i$ in two circles, which are bridges. Select a simple closed curve (in $Y$), $S$, contained in this ring, which connects with $\beta_i$ at two points (one, call it $y_i'$, near $y_i$, and another, call it $z_i'$, near $z_i$), but which uses the bridges to prevent intersection with any other (arc component of) $\alpha_0$. We will modify $S$ so that it visits $x_i$. If this is possible then either the arc `travel along $\beta_i$ from $y_i$ to $y_i'$, then {\sl clockwise} along $S$ until we reach $z_i'$, followed by traveling along the arc $\beta_i$ to $z_i$'; or the arc obtained by following $S$ {\sl anti--clockwise}, is the required modification of $\beta_i$.

Two cases arise. If $x_i$ is on the geometric boundary of $R_i$, we can find two arcs starting at $x_i$, and otherwise disjoint, both meeting $S$ (but disjoint from the grid $G$). The required modification of $S$ is now obvious (follow $S$, then the first arc met, to $x_i$, back to $S$ along the second arc, and finish following $S$). If $x_i$ is not on the geometric boundary, then it is to the left and below the grid. It is also in some rectangle, $R_\tau$, $\tau$ from some $\Sigma_{N'}$ where $N' >N$, where $R_\tau$ is disjoint from the geometric boundary of $R_i$. Following $S$ anticlockwise we can get to a point, $a_i$, below the lowest horizontal line of the grid, but above and to the left of the top--left corner of $R_\tau$. Following $S$ clockwise we can get to a point, $b_i$, left of  the leftmost vertical  line of the grid $G \cap R_i$, but below and to the right of the bottom--right corner of $R_\tau$. We can now find disjoint arcs from $a_i$ to the top--left corner of $R_\tau$, and from $b_i$ to the bottom--right corner of $R_\tau$. And these can be extended to disjoint (except at $x_i$) arcs $a_i$ to $x_i$ and $b_i$ to $x_i$. Again, using these arcs, we can modify $S$ to detour through $x_i$.
\end{proof}

\section{Complexity}
As discussed at the end of Section~2, the definition of the $n$-sac property uses quantification over two uncountable sets, namely all $n$-tuples of points in the space, and all arcs in the space. However there is a characterization of the $3$-sac property in graphs which only requires quantification  over countable sets. This characterization, then, is hugely simpler than the formal definition. One might hope to find similarly simple characterizations of $n$-sac, or $\omega$-sac, regular curves or rational curves. 
In this section we show that no such simple characterization of the $n$-sac (or $\omega$-sac) property exists for rational curves. Further,   there is no characterization of $n$-sac or $\omega$-sac general curves which does not use quantification over two uncountable sets, in other words, no  characterization simpler than the definition.

More precisely we show that for $n\ge 2$ or $n=\omega$ the set of $n$-sac rational continua and the set of all $n$-sac but not $(n+1)$-sac rational continua are $\mathbf{\Sigma}_1^1$-hard subsets of the space of  subcontinua of $\mathbb{R}^N$, for $N\ge 3$. This should be interpreted as saying that there is no formula characterizing the $n$-sac, or $\omega$-sac, rational curves which does not require at least one (existential) quantifier running over an uncountable (Polish) space.
We further show that, for $n\ge 2$ or $n=\omega$, the set of $n$-sac  continua are $\mathbf{\Pi}_2^1$-complete subsets of the space of  subcontinua of $\mathbb{R}^N$, for $N\ge 4$. The logical interpretation of this statement is that the simplest formula characterizing the $n$-sac, or $\omega$-sac, curves has exactly one universal quantifier followed by one existential quantifier running over  uncountable Polish spaces --- as in the formal definition of the $n$-sac property.

These complexity results are part of a significant body of work in descriptive set theory. In particular, Becker proved that the set of simply connected continua in $\R^3$ form a $\mathbf{\Pi}_1^1$-complete set (see \cite{kech} 33.17), while Becker and Ajtai (independently) showed that the path connected (i.e. arc connected, or equivalently $2$-sac) continua in $\R^3$ are $\mathbf{\Pi}_2^1$-complete  (see \cite{kech} 37.11). Other recent work in this area includes that of Becker and Pol \cite{becpol} on arc components.

The basic notions on this section are taken from \cite{cam}. A Polish space is one which is separable and completely metrizable. For a Polish  space $X$,  denote by $\mathcal{C}(X)$ the {\it hyperspace of subcontinua of} $X$
endowed with the Vietoris topology. It is Polish. A subset $A$ of $X$ is called {\it analytic} if it is the continuous image of a Polish space. The symbol $\sigmaset{1}{1}(X)$ denotes the
family of analytic subsets of $X$. The set of complements of elements of $\mathbf{\Sigma}_1^1$ is denoted $\mathbf{\Pi}_1^1$, while the complements of continuous images of $\mathbf{\Sigma}_1^1$ is written $\mathbf{\Pi}_1^2$.

Let $X$ and $Y$ be two metric spaces, and $A\subseteq X$, $B\subseteq Y$, we say that $A$ is {\it Wadge reducible to} $B$ if there is a continuous map
$f:X\to Y$ such that $A=f^{-1}(B)$; and we denote this by $A\leq_WB$.
Let $Y$ be a Polish space. Let $\Gamma$ be any of the families of subsets mentioned above. Then a subset $B$ of $Y$
is $\Gamma$--{\it hard} if $A\leq_WB$ for any $A\in \Gamma (X)$, where $X$ is a zero-dimensional Polish space. If in addition
$B\in \Gamma (Y)$, then we say that $B$ is $\Gamma$-{\it complete}. The point here is that, for example, any $\sigmaset{1}{1}$-hard set is at least as complex as any analytic set, and any formula describing it must contain at least one existential quantifier running over an uncountable Polish space. Further, if a $\Gamma$-hard set, $A$ say, Wadge reduces to another set $B$, then $B$ is also $\Gamma$-hard. This gives a standard method of proving that a given set of interest is, say, a $\sigmaset{1}{1}$-hard set --- show that a known $\sigmaset{1}{1}$-hard set reduces onto it.

Let $\N^{<\N}$ be the set of all finite sequences on $\N$, including the empty sequence, $()$.  Given $s=(s_1, s_2, \ldots, s_n)$ and $k$, let $s\concat k = (s_1, \ldots , s_n,k)$.
A {\it tree on} $\N$ is a subset $\tau$ of  $\N^{<\N}$ which is closed under initial segments, in other words if $t\in \tau$ and $s=t\restrict m$ for some $m\leq length(t)$, then $s\in \tau$. Identifying a subset of $\N^{<\N}$ with its characteristic function, let $\mathbf{Tr}$ be the subspace of $\{0,1\}^{\N^{<\N}}$ of all trees on $\N$. It is a closed subset, hence compact.
A tree with an infinite branch  is said to be \emph{ill-founded}. 
Denote by  $\mathbf{IF}$  the space of all ill-founded trees on $\N$. It is well known  that $\mathbf{IF}$ is $\sigmaset{1}{1}$-complete, see Theorem 27.1 and page 240 of \cite{kech}. 

Now our approach to showing that, for $n\ge 2$ or $n=\omega$ the set of $n$-sac rational continua and the set of all $n$-sac but not $(n+1)$-sac rational continua are $\mathbf{\Sigma}_1^1$-hard sets, is clear --- we will find  continuous reductions from $\mathbf{IF}$ onto these sets. This entails constructing, for a given tree, a suitable rational continua. The next few lemmas provide building blocks (`tiles') and tools for making complex rational continua.

A {\it tile} is any space $T$ which is (i) a subspace of the solid square pyramid in $\R^3$ with base $S=[-1,+1]^2 \times \{0\}$ and vertex at $(0,0,1)$ (so it has height $1$) and (ii) contains the four corner points of the base, $(i,j)$ for $i,j=\pm 1$. 
Call the intersection of a tile $T$ with $S$, the {\it base of} $T$. Call the intersection of $T$ with the {\it boundary} $B=\left([-1,1]\times \{-1,1\}\times \{0\}\right) \cup \left(\{-1,1\} \times [-1,1] \times \{0\}\right)$ of the base $S$, the {\it boundary} of $T$. Call the point $(-1,1, 0)$ the {\it top--left corner of the base}.

\begin{lemma}
There are (homeomorphic) tiles $T_0$ and $T_1$  such that: (i) $T_0$ and $T_1$ are $\omega$-sac rational curves,  (ii) the boundary of $T_0$ is  $B$, and (iii) the boundary of $T_1$  is $\left(A\times \{-1,1\}\times \{0\}\right) \cup \left(\{-1,1\} \times A \times \{0\}\right)$ where $A=\{-1,0,1\} \cup \{ -2^{-n} : n \in \N\}$.  
\end{lemma}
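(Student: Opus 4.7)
The plan is to take $T_0$ to be a flat embedding of the $\omega$-sac rational curve $Y$ from Theorem~\ref{omega_rational} and to construct $T_1$ as a homeomorphic tile obtained by lifting the outer edges of a variant $Y'$ of $Y$ up into the triangular side faces of the pyramid.

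Set $T_0 = Y \times \{0\} \subset P$. Since $Y \subseteq [-1,1]^2$ contains $\partial[-1,1]^2$ (in particular the four corners), $T_0$ is a tile with $T_0 \cap B = B$, and by Theorem~\ref{omega_rational} it is a locally connected rational $\omega$-sac curve. This gives (i) and (ii).

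To define $T_1$, first rebuild the curve of Theorem~\ref{omega_rational} as a homeomorphic variant $Y'$ by using an asymmetric recursion near the outer edges. In that construction, the ``junction points'' on each outer edge --- the points where interior circles of the construction accumulate --- are exactly the midpoints of the outer sides of the sub-rectangles $R_\sigma$, so under uniform dyadic subdivision they form the set of all dyadic rationals on each edge. Replace the uniform subdivision in the sub-rectangles adjacent to the outer boundary by an asymmetric one with non-uniform splitting ratios, chosen so that the $x$-coordinates of the junction points on the top edge become exactly $A \setminus \{\pm 1\} = \{0\} \cup \{-2^{-n} : n \in \N\}$, and symmetrically on the other three edges. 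In the interior of $[-1,1]^2$ (away from $\partial[-1,1]^2$), retain the uniform dyadic subdivision and the handle identifications on the circles $h_\sigma(S^\pm)$ of Theorem~\ref{omega_rational}. Since this modification only changes the relative sizes of sub-rectangles but not the combinatorial gluing data, $Y'$ is homeomorphic to $Y$; in particular it remains a locally connected rational $\omega$-sac curve.

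Now embed $Y'$ into the pyramid: the interior of $Y'$ is flat-embedded as $(x,y) \mapsto (x,y,0)$, while each outer edge is lifted into the corresponding triangular side face of $P$. For the top edge, replace $[-1,1] \times \{1\} \times \{0\}$ by the arc $\gamma(t) = (t,\ 1 - f(t),\ f(t))$ for $t \in [-1,1]$, where $f:[-1,1] \to [0,1]$ is continuous with $f^{-1}(0) = A$ and $f(t) \leq 1 - |t|$ --- for instance, $f(t) = \min\{\epsilon \cdot d(t,A),\ 1 - |t|\}$ for small $\epsilon > 0$. By construction of $Y'$, the junction points on the top edge lie in $A = f^{-1}(0)$, so the lifting map is continuous at every junction point; as it is a continuous bijection from the compact $Y'$, it is a homeomorphism onto its image. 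Lifting the other three outer edges symmetrically completes the definition of $T_1$, giving a tile homeomorphic to $T_0$ with $T_1 \cap B = (A \times \{-1,1\} \times \{0\}) \cup (\{-1,1\} \times A \times \{0\})$, establishing (iii).

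The main obstacle is verifying that the asymmetric recursion can be arranged consistently across all four outer edges so that the resulting $Y'$ is still $\omega$-sac. The arc-construction of Theorem~\ref{omega_rational} depends on a ``grid'' of horizontal and vertical chains of circles built at each stage of the uniform recursion; since the asymmetric modification is confined to sub-rectangles touching the outer boundary while uniform dyadic subdivision is retained in the interior, the grid structure persists densely enough inside every sub-rectangle, and the arc-building argument of Theorem~\ref{omega_rational} goes through with only notational changes to account for the non-uniform splitting ratios near the outer boundary.
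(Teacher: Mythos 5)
There is a genuine gap at the very first step: you set $T_0 = Y \times \{0\}$ and assert $Y \subseteq [-1,1]^2$, but $Y$ is \emph{not} a subspace of the square. It is the quotient of $X \subseteq [-1,+1]^2$ obtained by identifying the pairs of double sequences $h_\sigma(p^-_{-m,i}) \sim h_\sigma(p^+_{m,i})$, and indeed $Y$ cannot be embedded in the plane at all: it is $\omega$-sac, hence $4$-sac, while no planar continuum is $4$-sac (Proposition~\ref{planar_ctm_not_4sac}). So a flat embedding of $Y$ in the base of the pyramid does not exist, and the key idea needed for $T_0$ is exactly the one your proposal omits: realize the identifications geometrically inside $(-1,+1)^2 \times \R$ (the identified pairs are disjoint from $B$, with diameters and mutual distances tending to zero, so the ``bridges'' can be built in the $z$-direction while the boundary square $B$ stays pointwise fixed), and then compress the $z$-coordinate by a homeomorphism of $[-1,+1]^2\times\R$ fixing $B$ so that the resulting copy of $Y$ lies in the pyramid. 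Without this step neither (i) nor (ii) is established.

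Your route to $T_1$ also has problems, and is in any case far more laborious than necessary. First, the map you describe is not continuous as stated: points of the continuum in the open square accumulate on \emph{every} point of each outer edge (the subrectangles $R_\sigma$ adjacent to an edge shrink in diameter), so replacing the top edge by $\gamma(t)=(t,1-f(t),f(t))$ while leaving nearby interior points flat breaks continuity at every edge point with $f(t)>0$, not just at non-junction points; you would have to deform an entire neighborhood, i.e.\ apply a global map. Second, the ``asymmetric recursion'' producing $Y'$ silently changes the construction of Theorem~\ref{omega_rational}, and you acknowledge but do not actually carry out the re-verification that $Y'$ is still a rational $\omega$-sac curve homeomorphic to $Y$ --- that is precisely the content you are not entitled to assume. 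None of this is needed: once $T_0$ is embedded in the pyramid with boundary $B$, one obtains $T_1$ by a single shrinking map of $\R^3$ toward the center of the base which is a homeomorphism onto its image and fixes exactly the closed countable set $\left(A\times\{-1,1\}\times\{0\}\right)\cup\left(\{-1,1\}\times A\times\{0\}\right)$ (such a map exists because $A$ is closed, e.g.\ scale by a factor that equals $1$ exactly on that set). The image is homeomorphic to $T_0$, still lies in the pyramid, and meets $B$ in precisely the required set, so no property of the internal recursion, and no new $\omega$-sac argument, has to be revisited.
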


\begin{proof}
The example, $Y$, of an $\omega$-sac rational curve given in Theorem~\ref{omega_rational} is derived from a space $X$. This space $X$ is a subspace of $[-1,+1]^2$. We may suppose that $X$ is in fact a subspace of the square $S=[-1,+1]^2 \times \{0\}$. The space $Y$ is obtained from $X$ by identifying a sequence of pairs of double sequences. These double sequences all are disjoint from the boundary, $B$, of the square $S$, and the diameters and distance between pairs of sequences converges to zero. This identification process can be repeated in $(-1,+1)^2 \times \R$, keeping the boundary, $B$, of the square, $S$, fixed, to get a space $T_0'$ homeomorphic to $Y$. 
Applying a homeomorphism of $[-1,+1]^2 \times \R$ fixing $B$, and changing only the $z$--coordinates, to $T_0'$, we get a space $T_0$, also homeomorphic to $Y$ and containing $B$, and which is contained in the pyramid with base $[-1,+1]^2 \times \{0\}$ and  height $1$. Thus $T_0$ is a tile.

Scaling $\R^3$ around the center point of the the base square, $S$, we can shrink $T_0$ away from the boundary $B$ of $S$ and still have it inside the required pyramid. Instead of doing this transformation, shrink $T_0$ while keeping fixed the set  $\left(A\times \{-1,1\}\times \{0\}\right) \cup \left(\{-1,1\} \times A\times \{0\}\right)$. This gives $T_1$.
\end{proof}

Let $X$ be a space and $A$ an infinite subset. We say that  $X$ is $\omega$-sac$^+$ (with respect to $A$) if for any points $x_1, \ldots , x_n$ in $X$ there is an arc $\alpha$ in $X$ visiting the $x_i$ in order, such that $\alpha$ meets $A$ only in a finite set.  Observe that if $X$ is $\omega$-sac$^+$ with respect to $A$, and $A'$ is an infinite subset of $A$, then $X$ is $\omega$-sac$^+$ with respect to $A'$.

\begin{lemma}[$\omega$--Gluing]\label{omega_gluing}
Let $Z = X \cup Y$, where $X, Y$ and $A= X \cap Y$ are infinite. If $X$ is $\omega$-sac$^+$ with respect to $A$, and $Y$ is $\omega$-sac, then $Z$ is $\omega$-sac.
\end{lemma}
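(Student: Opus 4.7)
The plan is to mimic the proof of the Finite Gluing Lemma (Lemma~\ref{finite_gluing}), replacing the hand-picking of bridge points from a finite intersection by a construction that exploits the $^+$ condition on $X$ to keep $\beta \cap \gamma$ finite. Given distinct points $z_1, \ldots, z_n$ in $Z$, I first augment the sequence by inserting a new point of $A$, distinct from every $z_i$ and from any previously inserted bridge, between consecutive $z_i, z_{i+1}$ whenever one lies in $X \setminus Y$ and the other in $Y \setminus X$; this is possible since $A$ is infinite. Call the resulting sequence $w_1, \ldots, w_N$, and let $\mathcal{W}_X$ (respectively $\mathcal{W}_Y$) be the subsequence of $w_j$'s lying in $X$ (respectively $Y$). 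Every $w_j \in A$ appears in both subsequences, and after the bridge insertions, any two consecutive $w_j, w_{j+1}$ lie jointly in $X$ or jointly in $Y$.

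Next, I apply the $\omega$-sac$^+$ hypothesis on $X$ to $\mathcal{W}_X$ to obtain an arc $\beta$ in $X$ visiting $\mathcal{W}_X$ in order with $F := \beta \cap A$ finite. Let $E = F \setminus \mathcal{W}_X$; this is a finite set of ``extra'' crossings, all lying in $A$ but disjoint from every $w_j$ (indeed $E \cap \mathcal{W}_X = \emptyset$ by definition, while the $w_j$'s in $\mathcal{W}_Y \setminus \mathcal{W}_X$ lie in $Y \setminus X$, hence outside $A \supseteq E$). I then apply the $\omega$-sac hypothesis on $Y$ to the finite sequence obtained by listing $\mathcal{W}_Y$ in its original order and appending the points of $E$ afterwards in any order, producing an arc $\gamma$ in $Y$ that first traverses $\mathcal{W}_Y$ in order and only then visits all of $E$.

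The arc $\alpha$ is now assembled by concatenation: for each consecutive pair $w_j, w_{j+1}$, use the subarc of $\beta$ from $w_j$ to $w_{j+1}$ if both points lie in $X$, and otherwise (they must then both lie in $Y$, thanks to the bridges) use the subarc of $\gamma$ from $w_j$ to $w_{j+1}$. By construction the resulting path visits $w_1, \ldots, w_N$, and hence $z_1, \ldots, z_n$, in the prescribed order, so the remaining task is to verify that $\alpha$ has no self-intersections.

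The key observation, and the only place where the $^+$ condition is essential, is that $\beta \cap \gamma \subseteq X \cap Y \cap \beta = A \cap \beta = F$, a finite set. The $\beta$-subarcs used in $\alpha$ are pairwise disjoint apart from their shared endpoints, as they are consecutive subarcs of the arc $\beta$; the same holds for the $\gamma$-subarcs. A $\beta$-subarc can meet a $\gamma$-subarc only inside $F$, but every $\gamma$-subarc used in $\alpha$ lies strictly inside the initial $\mathcal{W}_Y$-portion of $\gamma$, and by injectivity of $\gamma$ is therefore disjoint from the tail containing $E$; hence it meets $F$ only at its own endpoints, which are exactly the $A$-valued transition points at which $\alpha$ switches between $\beta$ and $\gamma$. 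Thus $\alpha$ is an arc, which completes the proof. The main obstacle — absent from the finite gluing case — is precisely this potential contamination of $\gamma$ by ``accidental'' intersections with $\beta$ at points of $F$ away from the intended transitions, and it is resolved by parking the extras $E$ at the tail of $\gamma$.
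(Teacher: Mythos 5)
Your proof is correct and is essentially the paper's own argument: bridge points chosen from $A$ at the $X\setminus Y$/$Y\setminus X$ transitions, an arc $\beta$ in $X$ supplied by the $\omega$-sac$^+$ hypothesis with $F=\beta\cap A$ finite, an arc $\gamma$ in $Y$ supplied by plain $\omega$-sac, and an interleaving along consecutive pairs. The only substantive difference is that the paper simply asserts $\gamma$ can be chosen ``avoiding $F\setminus\{t_1^{\pm},\ldots,t_k^{\pm}\}$'', whereas you extract that avoidance from $\omega$-sac itself by parking the stray crossings $E=F\setminus\mathcal{W}_X$ at the tail of $\gamma$'s itinerary and invoking injectivity of $\gamma$ --- a step the paper leaves implicit, so your version is, if anything, more complete (the paper also brackets each block of $Y\setminus X$ points with a pair of bridge points rather than one per transition, which is immaterial).
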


\begin{proof}
Take any finite sequence of points $z_1, \ldots , z_{N}$ in $Z$. By adding points to the start and end of the sequence, if necessary, we can suppose that $z_0$ and $z_N$ are in $X$. Group the sequence, $z_1, \ldots , z_{n_1}$, $z_{n_1+1}, \ldots , z_{n_2}$, $\ldots, z_{n_{k-1}}, z_{n_{k-1}+1}$, $\ldots, z_{n_k}$, where $z_1, \ldots , z_{n_1}$ are in $X$, $z_{n_1+1}, \ldots , z_{n_2}$ are in $Y \setminus X$, and so on, until $z_{n_{k-1}+1}, \ldots, z_{n_k}=z_N$ are in $X$. Pick $t_1^{\pm}, \ldots, t_k^{\pm}$ in $A \setminus \{z_i\}_{i \le N}$.

Using the fact that $X$ is $\omega$-sac$^+$, pick arc $\alpha^-$ in $X$ visiting in order, $z_1, \ldots, z_{n_1}$, $t_1^-, t_1^+$, $z_{n_2+1}, \ldots, z_{n_2+1}$, $\ldots, z_{n_3}, t_2^-, t_2^+$ and so on, ending with $z_{n_k}$, such that $\alpha^-$ meets $A$ only in a finite set $F$.

Using the fact that $Y$ is $\omega$-sac, pick an arc $\alpha^+$ in $Y$ visiting in order the points, $t_1^-, z_{n_1+1}$, $\ldots , z_{n_2}, t_1^+,t_2^-$ and so on,  avoiding $F \setminus \{t_1^{\pm}, \ldots, t_k^{\pm}\}$.

Now we can interleave $\alpha^-$ and $\alpha^+$ to get an arc, $\alpha$, visiting all the specified points in order. So we start $\alpha$ by following $\alpha^-$ to visit $z_1, \ldots , t_1^-$, then pick up $\alpha^+$ at $t_1^-$ to visit $z_{n_1+1}, \ldots , z_{n_2}, t_1^+$, and back to $\alpha^-$ from $t_1^+$, and so on.
\end{proof}

\begin{lemma}\label{tiles_om_sac_plus} \

(i) The tile $T_0$ is  $\omega$-sac$^+$ with respect to any infinite discrete subset of its boundary.

(ii) The tile $T_1$ is $\omega$-sac$^+$ with respect to its boundary.
\end{lemma}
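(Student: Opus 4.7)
The plan is to prove a common reduction: for any $x_1, \ldots, x_n \in T_0$ there is an arc in $T_0$ visiting them in order whose intersection with the square frame $B := \partial([-1,1]^2)\times\{0\}$ is contained in $\{x_1,\ldots,x_n\}$ and hence is finite. Given this reduction, (i) is immediate, since any infinite discrete $D\subseteq B$ satisfies $\alpha\cap D\subseteq\alpha\cap B$. For (ii), recall that $T_1$ is obtained from $T_0$ by applying a homeomorphism $\phi$ of $\R^3$ that fixes $A^* := (A\times\{-1,1\}\times\{0\})\cup(\{-1,1\}\times A\times\{0\})$ pointwise and pushes $B\setminus A^*$ off $B$; so given $y_1,\ldots,y_n \in T_1$, apply the reduction to $\phi^{-1}(y_1),\ldots,\phi^{-1}(y_n) \in T_0$, obtain an arc $\alpha\subset T_0$ with $\alpha\cap B$ finite, and then $\phi(\alpha)\cap A^* = \phi(\alpha\cap A^*) \subseteq \phi(\alpha\cap B)$ is finite.

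To prove the reduction, I reuse the arc construction from the proof of Theorem~\ref{omega_rational}. There the arc is assembled in two stages. The ``grid arc'' $\alpha_\ast$ sits in a finite union of chains of circles $V_P, \ldots, V_{P+M}, H_P, \ldots, H_{P+M}$ inside each relevant $R_\sigma$; each chain is a union of circles lying strictly in the interior of $[-1,1]^2\times\{0\}$ and accumulating at limit points on $B$, and $\alpha_\ast$ traverses only finitely many circles per chain, jumping between adjacent circles via their interior bridge points, so $\alpha_\ast \cap B = \emptyset$. For each $x_i$ a detour is then added inside $R_i$ using an inner ring $S$ of circles together with two short arcs joining $x_i$ to $S$. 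If $x_i$ is off $B$, the detour is already entirely interior. If $x_i \in B$, the two arcs from $x_i$ to $S$ must be chosen to meet $B$ only at $x_i$.

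This last requirement is the main technical point, and it rests on the fact that every point of the Urysohn--Charatonik continuum has countably infinite order in the Menger--Urysohn sense; this property is inherited by $T_0\cong Y$ because the bridge identifications used to form $Y$ take place strictly in the interior of $(-1,1)^2\times\{0\}$ and so leave the local structure at points of $B$ unchanged. At any $x\in B$, at most two of the countably many local branches run along $B$ (one in each direction); the remaining infinitely many branches are interior, and sufficiently short initial segments of any two of them give disjoint arcs from $x$ meeting $B$ only at $x$, which can then be extended inside $R_i$ (itself arc connected away from $B$) to reach the ring $S$. Substituting these modified arcs for the generic ones in the detour at each $x_i\in B$ produces an arc $\alpha$ with $\alpha\cap B\subseteq\{x_i : x_i\in B\}$, a finite set, completing the reduction.
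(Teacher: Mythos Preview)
Your proof is correct and follows the same underlying approach as the paper: both reuse the arc construction from Theorem~\ref{omega_rational} and track where that arc meets the boundary square $B$. The organizational difference is that you prove a single stronger reduction ($\alpha\cap B\subseteq\{x_1,\ldots,x_n\}$) and derive both parts from it via the homeomorphism $\phi$, whereas the paper argues (i) from the weaker observation that the Theorem~\ref{omega_rational} arc need only touch $B$ in arbitrarily small neighborhoods of those $x_i$ that lie on $B$, and then for (ii) singles out the four accumulation points $(0,\pm1),(\pm1,0)$ of $A^*$ and builds explicit detour arcs at each of them. Your unified reduction makes (ii) cleaner (no limit-point case split), at the cost of needing the claim that every boundary point admits two arcs into the interior meeting $B$ only at that point; this is true, but the countably-infinite-order argument you give is really tailored to generic boundary points, and the corners $(\pm1,\pm1)$ and the points $(0,\pm1),(\pm1,0)$ deserve an explicit word, since the local combinatorics of the construction there is slightly different from the generic case.
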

\begin{proof}
Recall that $T_0$ and $T_1$ are both homeomorphic.  In turn, $T_0$ is a homeomorph of $Y$ from Theorem~\ref{omega_rational} with the boundary square for both not just homeomorphic but identical (when we identify the plane, $\R^2$, with $\R^2 \times \{0\}$). So we argue this for $Y$ only. Looking at the proof that $Y$ is $\omega$-sac it is clear that the arc, $\alpha_0$, visiting some specified points, $x_1, \ldots, x_n$, in order, need only touch the boundary in an arbitrarily small neighborhood of any $x_i$ which happens to be on the boundary. This immediately gives the first claim --- $Y$ (and so $T_0$) is $\omega$-sac$^+$ with respect to infinite discrete subsets of the boundary square.

Further, the point $(0,-1)$  can be reached from the interior of $Y$ (away from the boundary square) by two disjoint arcs which meet the set $\left(A\times \{-1,1\}\right) \cup \left(\{-1,1\} \times A\right)$  only at $(0,-1)$ --- for one arc, $\alpha^-$, follow one side of the sequence of circles converging to $(0,-1)$ and for the other, $\alpha^+$, start at $(0,-1)$ go right along the boundary edge a short way, and then go into the interior. The same is true for the points $(0,1), (-1,0)$, and $(1,0)$.

Now to get the desired arc, if every $x_i$ is not one of $(0,-1), (0,1), (-1,0)$, or $(1,0)$, then just use $\alpha_0$. While if $x_i$, is say, $(0,-1)$, then pick $\alpha_0$ to visit $x_1, \ldots, x_{i-1}, t^-, t^+, x_{i+1}, \ldots$, where $t^-, t^+$ are points close to $(0,-1)$ on $\alpha^-$ and $\alpha^+$ respectively. Now let $\alpha$ be the arc that follows $\alpha_0$ to $t^-$, then follows $\alpha^-$ to $x_i=(0,-1)$, then $\alpha^+$ to $t^+$, and then resumes along $\alpha_0$. 
\end{proof}

For any tile $T$, $\vx=(x,y)$ in $\R^2$ and $a,b >0$, denote by $T(\vx, a,b)$ the space $T$ scaled in the $x$ and $y$ coordinates so its base has length $a$ and width $b$, then scaled in the $z$ coordinate so that the pyramid containing it has height no more than the smaller of $a$ and $b$, and then translated in the $x,y$-plane so that the top--left corner is at $(x,y,0)$.

From Lemma~\ref{omega_gluing}, part (ii) of Lemma~\ref{tiles_om_sac_plus}, and an easy induction argument, the following is clear.
\begin{lemma}\label{edge_conn_T1}
Any space obtained by gluing along matching edges a finite family of translated and scaled copies of $T_1$ is a rational $\omega$-sac curve.
\end{lemma}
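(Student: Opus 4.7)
The plan is to proceed by induction on the number of tiles in the gluing. For the base case, a single translated and scaled copy of $T_1$ is homeomorphic to $T_1$ itself, which is a rational $\omega$-sac curve by the first lemma of this section.

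For the inductive step, let $Z$ be a union of $k \ge 2$ translated, scaled copies of $T_1$ glued along matching edges. Because the gluing pattern is connected, one can linearly order the tiles as $T^{(1)}, \dots, T^{(k)}$ so that for every $i \ge 2$ the tile $T^{(i)}$ shares at least one matching edge with some $T^{(j)}$, $j<i$; just enumerate the vertices of a spanning tree of the tile-adjacency graph in BFS order. Writing $X_i = T^{(1)} \cup \cdots \cup T^{(i)}$, I will show by induction on $i$ that each $X_i$ is a rational $\omega$-sac curve, so that the conclusion $Z = X_k$ follows.

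Assume inductively that $X_{i-1}$ is $\omega$-sac, and set $A = X_{i-1} \cap T^{(i)}$. By choice of ordering, $A$ contains at least one entire matching edge of $T^{(i)}$, so $A$ is an infinite subset of the boundary of $T^{(i)}$. By Lemma~\ref{tiles_om_sac_plus}(ii), $T^{(i)}$ is $\omega$-sac$^+$ with respect to its boundary, hence (by the monotonicity observation preceding Lemma~\ref{omega_gluing}) also with respect to $A$. Applying the $\omega$--Gluing Lemma~\ref{omega_gluing} with $T^{(i)}$ in the role of $X$ and $X_{i-1}$ in the role of $Y$ yields that $X_i = T^{(i)} \cup X_{i-1}$ is $\omega$-sac. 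Rationality is preserved because a finite union of rational continua glued along countable sets is again rational: any point has arbitrarily small neighborhoods obtained by combining countable-boundary open sets drawn from each tile, and the resulting boundary remains countable.

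There is no real obstacle; the argument is a bookkeeping exercise built around the $\omega$--Gluing Lemma together with a natural enumeration of the tiles. The only point that needs to be checked at each inductive stage is that the intersection of the new tile with the previously built-up union is an infinite subset of that tile's boundary, which is automatic from the definition of ``matching-edge gluing'' and the choice of ordering.
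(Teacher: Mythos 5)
Your proposal is correct and matches the paper's intended argument: the paper dispatches this lemma in one line as following ``from Lemma~\ref{omega_gluing}, part (ii) of Lemma~\ref{tiles_om_sac_plus}, and an easy induction argument,'' and your spanning-tree ordering of the tiles, with the new tile playing the role of the $\omega$-sac$^+$ piece $X$ and the previously assembled union playing $Y$ in the $\omega$--Gluing Lemma, is exactly the induction the authors have in mind. Your added remarks (that each intersection is an infinite subset of the new tile's boundary, and that rationality survives gluing along countable sets) are correct fillings-in of details the paper leaves implicit.
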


We define recursively a sequence of tiles. The first in the sequence is $T_1$ from above. Given tile $T_n$, where $n \ge 1$, define $T_{n+1}$ to be $T_n ((-1,1),1,1) \cup T_n ((-1,0),1,1) \cup T_n ((0,1),1,1) \cup T_n ((0,0),1,1)$ scaled in the $z$--coordinate only so as to fit inside the pyramid with base $S$ and height $1$. Then all the tiles $T_n$ are rational $\omega$-sac continua.

\newcommand{\gtile}[5]{
\begin{scope}[cm={#3/2,0,0,#4/2,(#1+#3/2,#2-#4/2)}]
\tile{#5}
\end{scope}
}
\newcommand{\gtileP}[5]{
\begin{scope}[cm={#3/2,0,0,#4/2,(#1+#3/2,#2-#4/2)}]
\tileP{#5}
\end{scope}
}
\newcommand{\pyramid}{
\draw [dotted] (-1,-1) -- (1,1);
\draw [dotted] (-1,1) -- (1,-1);
\draw [fill]   (0,0) circle (0.025);
}
\newcommand{\tile}[1]{

\ifthenelse{\equal{#1}{0}}
{
\draw (-1,-1) rectangle (1,1);
}{}

\ifthenelse{\equal{#1}{1}}
{
\draw (-1,-1) 
.. controls (-3/4,-5/6) .. (-1/2,-1)
.. controls ( -3/8,-7/8) .. (-1/4,-1)
.. controls ( -3/16,-15/16) .. (-1/8,-1)
.. controls ( -3/32,-31/32) .. (-1/16,-1)
.. controls ( -1/32,-63/64) .. (0,-1)
.. controls (1/2,-3/4) ..  (1,-1);
\draw (-1,1) 
.. controls (-3/4,5/6) .. (-1/2,1)
.. controls ( -3/8,7/8) .. (-1/4,1)
.. controls ( -3/16,15/16) .. (-1/8,1)
.. controls ( -3/32,31/32) .. (-1/16,1)
.. controls ( -1/32,63/64) .. (0,1)
.. controls (1/2,3/4) ..  (1,1);
\draw (-1,-1) 
.. controls (-5/6,-3/4) .. (-1,-1/2)
.. controls (-7/8,-3/8) .. (-1,-1/4)
.. controls (-15/16,-3/16) .. (-1,-1/8)
.. controls (-31/32,-3/32) .. (-1,-1/16)
.. controls (-63/64,-1/32) .. (-1,0)
.. controls (-3/4,1/2) ..  (-1,1);
\draw (1,-1) 
.. controls (5/6,-3/4) .. (1,-1/2)
.. controls (7/8,-3/8) .. (1,-1/4)
.. controls (15/16,-3/16) .. (1,-1/8)
.. controls (31/32,-3/32) .. (1,-1/16)
.. controls (63/64,-1/32) .. (1,0)
.. controls (3/4,1/2) ..  (1,1);
}{}
\ifthenelse{\equal{#1}{2}}
{
\foreach \x in {-1,0}
  \foreach \y in {0,1}
      {\gtile{\x}{\y}{1}{1}{1};}
}{}
\ifthenelse{\equal{#1}{3}}
{
\foreach \x in {-1,-1/2,0,1/2}
  \foreach \y in {-1/2,0,1/2,1}
      {\draw[thin] (\x,\y) rectangle +(1/2,-1/2);}
}{}
\ifthenelse{\equal{#1}{4}}
{
\fill[pattern=checkerboard light gray] (-1,1) rectangle (1,-1);
}{}
}
\newcommand{\tileP}[1]{
\tile{#1}
\pyramid
}

\begin{center}
\begin{tabular}{cccc}
\begin{tikzpicture}
\tileP{0}
\end{tikzpicture}
& \begin{tikzpicture}
\tileP{1}
\end{tikzpicture}
& \begin{tikzpicture}
\gtileP{0}{1}{1}{1}{1}
\gtileP{1}{1}{2}{1}{1}
\gtileP{0}{1.5}{1}{1/2}{1}
\gtileP{0}{2}{1}{1/2}{1}
\gtileP{1}{2}{1/2}{1/2}{1}
\end{tikzpicture}
& \begin{tikzpicture}
\tileP{2}
\end{tikzpicture} \\
Generic tile 
& Tile $T_1$ 
& Gluing $T_1$s
& Tile $T_2$
\end{tabular}
\end{center}

\begin{theorem}\label{rationalsac}
Fix $N \ge 3$. For $n \ge 2$ or $n=\omega$, let $R_n$ be the set of rational $n$-sac continua, and let $R_{n, \neg (n+1)}$ be the set of rational continua which are $n$-sac but not $(n+1)$-sac.

Then all the sets $R_n$ and $R_{n, \neg (n+1)}$ are $\sigmaset{1}{1}$--hard subsets of the space $\mathcal{C}(\R^N)$.
\end{theorem}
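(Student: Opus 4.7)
The plan is to Wadge-reduce the $\sigmaset{1}{1}$-complete set $\mathbf{IF}$ of ill-founded trees on $\N$ to each of the target sets. For each target I will construct a continuous map $\Phi:\mathbf{Tr}\to\mathcal{C}(\R^N)$ with $\tau\in\mathbf{IF}$ if and only if $\Phi(\tau)$ belongs to that set; since $\mathbf{IF}$ is $\sigmaset{1}{1}$-hard, the target then inherits hardness. A single reduction landing in $R_\omega$ will automatically reduce $\mathbf{IF}$ into every $R_n$ for $n\ge 2$ as well, provided the well-founded case is arranged to fail every $n$-sac property.

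Construction. Fix a rigid rational ``skeleton'' $B\subseteq\R^N$ which fails to be $n$-sac through a concrete finite obstruction --- for instance a planar variant built from Proposition~\ref{planar_ctm_not_4sac} combined with a finite cut-set as in Lemma~\ref{finite_cut_not_4sac}. Fix also a tree-indexed family of regions $R_s\subseteq\R^N$, $s\in\N^{<\N}$, of diameter at most $2^{-|s|}$, whose accumulation set lies on the obstruction of $B$. To each $s$ associate a scaled copy $U_s$ of the tile $T_1$ placed inside $R_s$ and attached to $B$ and to $U_{s\restrict(|s|-1)}$ along pairs of matched boundary arcs, in the style of Lemma~\ref{edge_conn_T1}. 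Define $\Phi(\tau)=B\cup\bigcup_{s\in\tau}U_s$. Continuity of $\Phi$ in the Vietoris topology is immediate from the diameter bound: if $\tau$ and $\tau'$ agree on nodes of length $<k$, then their images are Hausdorff-close to within $2^{-k+1}$. Rationality of each $\Phi(\tau)$ follows since each $U_s$ is rational, the tiles shrink, and the matched-arc identifications preserve countable-boundary bases.

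Dichotomy. If $\tau$ is ill-founded with branch $b$, the nested family $\{U_{b\restrict k}\}_{k}$ together with its attachments to $B$ forms an $\omega$-sac sub-continuum by Lemma~\ref{edge_conn_T1} and the $\omega$-Gluing Lemma~\ref{omega_gluing}, whose hypotheses are supplied by the $\omega$-sac$^+$ property of Lemma~\ref{tiles_om_sac_plus}; using this rich sub-continuum as a bypass one routes an arc through any prescribed finite list of points in order, so $\Phi(\tau)\in R_\omega$. Conversely, if $\tau$ is well-founded, every tile-chain along a branch terminates at finite depth, so only finitely many tiles cluster at the obstruction of $B$; choosing witness points for the skeleton's obstruction (following Proposition~\ref{planar_ctm_not_4sac} or Lemma~\ref{finite_cut_not_4sac}), no arc in $\Phi(\tau)$ visits them in order, so $\Phi(\tau)\notin R_n$ for the designed $n$, a fortiori $\Phi(\tau)\notin R_\omega$. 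Thus a single $\Phi$ reduces $\mathbf{IF}$ simultaneously to $R_\omega$ and to each $R_n$ with $n\ge 2$.

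For the sets $R_{n,\neg(n+1)}$, post-compose $\Phi$ with the Finite Gluing construction of Lemma~\ref{finite_gluing}: glue $\Phi(\tau)$ to a fixed $(2n-1)$-sac rational continuum (produced as in Section~3) along $n-1$ generic points placed outside the obstruction of $B$. When $\tau$ is ill-founded, $\Phi(\tau)$ is $\omega$-sac and Lemma~\ref{finite_gluing} yields a continuum that is $n$-sac but not $(n+1)$-sac. When $\tau$ is well-founded, the $(n-2)$-element cut-set witnessing the failure of $n$-sac in $\Phi(\tau)$ can be arranged to remain a cut-set of the glued space (the $n-1$ glue points being selected in a single component of the complement of this cut), so Lemma~\ref{finite_cut_not_4sac} keeps the result outside $R_n$, hence outside $R_{n,\neg(n+1)}$. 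The main technical hurdle is the forward direction of the dichotomy: arranging the matched boundary arcs so that the $\omega$-Gluing Lemma applies globally to $\Phi(\tau)$ and produces arcs through arbitrary prescribed points, in particular points of $B$ far from the tile chain. This requires ensuring that arcs can enter and leave the chain through disjoint matched boundary arcs, and is precisely where the $\omega$-sac$^+$ property of Lemma~\ref{tiles_om_sac_plus}, together with careful placement of the attachments on the distinct infinite sides of successive tiles, is essential.
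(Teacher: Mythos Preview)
Your overall strategy --- reduce from $\mathbf{IF}$, build a rational continuum that is $\omega$-sac exactly when the tree is ill-founded --- is the paper's strategy. But the mechanism you propose for the well-founded side does not deliver what you need. You want a \emph{single} $\Phi$ whose well-founded images fail every $n$-sac property, i.e.\ fail to be arc connected. Yet the obstructions you invoke (a finite cut-set from Lemma~\ref{finite_cut_not_4sac}, or planarity from Proposition~\ref{planar_ctm_not_4sac}) never make a \emph{continuum} fail $2$-sac: a continuum with an $(n-2)$-point cut-set is still arc connected, and planar continua can be $3$-sac. So your well-founded images fall outside $R_n$ only for one designed $n$, contradicting your own ``single $\Phi$'' claim. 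The paper's construction handles this by a genuinely different device: $K_\tau$ contains a piece $L$ reachable from the rest only through an infinite nested chain of tiles arranged like a topologist's sine curve, so that well-founded $\tau$ yields a continuum which is connected but \emph{not arc connected}. That sine-curve idea is the missing ingredient, and it is not a cosmetic difference --- it is what lets one reduction hit every $R_n$ simultaneously.

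Two further points. First, your well-founded argument asserts that ``only finitely many tiles cluster at the obstruction'' because every branch terminates; but a well-founded tree on $\N$ can have nodes of arbitrarily large length (take all sequences $(k,0,\ldots,0)$ of length $\le k$), so tiles of arbitrarily small diameter may still accumulate on your obstruction and the claimed survival of the obstruction is unjustified. Second, on the ill-founded side, exhibiting one $\omega$-sac subcontinuum along a branch $b$ does not make $\Phi(\tau)$ itself $\omega$-sac: the prescribed points may lie in tiles $U_s$ with $s$ off $b$, or deep inside $B$ away from the chain. The paper spends real effort here (Claims~1--2 in the proof), showing via Lemmas~\ref{edge_conn_T1} and~\ref{omega_gluing} that any finite family of points sits in a finite edge-connected block of tiles which is $\omega$-sac, and then gluing this to $L$ through the branch limit points; your sketch does not indicate how the tile attachments are arranged so that these lemmas apply globally.
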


\begin{proof}
 We prove that there is a continuous map $K$ of the space $\mathbf{Tr}$ of all trees on $\N$ into the space $\mathcal{C}(\R^3)$ such that: if the tree $\tau$ has no infinite branch then $K_\tau$ is a rational continuum which is not arc-connected (in other words, $2$-sac), while if $\tau$ has an infinite branch, then $K_\tau$ is an $\omega$-sac rational continuum. The claim that $R_n$ is a $\sigmaset{1}{1}$--hard subset of the space $\mathcal{C}(\R^N)$, follows simultaneously for all $n$ and $N$.
We then give the minor modifications necessary to have that $K_\tau$ is $n$-sac but not $(n+1)$-sac when $\tau$ has an infinite branch. The remaining claims follows immediately.

A basic building block for  $K_\tau$ is $S(T)$ a variant of the  topologist's sine-curve based on a tile $T$. This sine-curve lies in the rectangular box 
$\{(x,y,z)\in\R^3 : 0\leq x\leq 13/3, 0\leq y \leq 5/2, 0\leq z\leq 1\}$. We call the point $(0,5/2,0)$ the top left corner of $S(T)$.

Explicitly $S(T)$ is  $(D_0\cup AB_0 \cup U_0 \cup AT_0) \cup \bigcup_{n \ge 1} (D_n \cup C_n \cup AB_n \cup U_n \cup AT_n)$ where
\begin{eqnarray*}
D_n &=& \bigcup_{i=1}^{2 \cdot 4^n} T(10/(3 \cdot 4^n), 1/2+i/4^n,1/4^n),\\
C_n &=& T(10/(3 \cdot 4^n), 1/2-1/4^n,1/4^n) \cup T(10/(3 \cdot 4^n), 1/2,1/4^n) \quad (n \ge 1), \\
AB_n&=& T((7/(3 \cdot 4^n),1/2+1/4^n),1/4^n),  \\
U_n &=& \bigcup_{i=1}^{2 \cdot 4^n -1} T((4/(3 \cdot 4^n),1/2+i/4^n),1/4^n) \\
    & & \cup \, T((4/(3 \cdot 4^n),5/2-1/4^{n+1},1/4^n,3/4^{n+1}) \\
    & & \cup \, T((4/(3 \cdot 4^n),5/2,1/4^n,1/4^{n+1}), \quad \text{and} \\
AT_n&=& T((4/(3 \cdot 4^n)-1/4^{n+1},5/2,1/4^{n+1},1/4^{n+1}).
\end{eqnarray*}

\tikzset{c1/.style={}}
\tikzset{c2/.style={}}
\tikzset{c3/.style={}}
\newcommand{\SineCurve}[1]{
\foreach \i in {1,...,2}
  {\gtileP{10/3}{1/2+\i}{1}{1}{#1}}
\gtileP{7/3}{1/2+1}{1}{1}{#1}
\foreach \i in {1,...,1}
  {\gtileP{4/3}{1/2+\i}{1}{1}{#1}}
\gtileP{4/3}{5/2-1/4}{1}{3/4}{#1}
\gtileP{4/3}{5/2}{1}{1/4}{#1}
\gtileP{13/12}{5/2}{1/4}{1/4}{#1}

\begin{scope}[c1]
{\gtileP{10/12}{1/2-1/4}{1/4}{1/4}{#1}}
\end{scope}
\foreach \i in {0,...,8}
  {\gtileP{10/12}{1/2+\i/4}{1/4}{1/4}{#1}}
\gtileP{7/12}{1/2+1/4}{1/4}{1/4}{#1}
\foreach \i  in {1,...,7}
  {\gtileP{4/12}{1/2+\i/4}{1/4}{1/4}{#1}}
\gtileP{4/12}{5/2-1/16}{1/4}{3/16}{#1}
\gtileP{4/12}{5/2}{1/4}{1/16}{#1}
\gtile{13/48}{5/2}{1/16}{1/16}{#1}

\begin{scope}[c2]
\gtile{10/48}{1/2-1/16}{1/16}{1/16}{0}
\end{scope}
\foreach \i in {0,...,32}
  {\gtile{10/48}{1/2+\i/16}{1/16}{1/16}{0}}
\gtile{7/48}{1/2+1/16}{1/16}{1/16}{0}
\foreach \i in {1,...,31}
  {\gtile{4/48}{1/2+\i/16}{1/16}{1/16}{0}}
\gtile{4/48}{5/2-1/64}{1/16}{3/64}{0}
\gtile{4/48}{5/2}{1/16}{1/64}{0}
\gtile{13/192}{5/2}{1/64}{1/64}{0}

\begin{scope}[c3]
\gtile{10/192}{1/2-1/64}{1/64}{1/64}{0}
\end{scope}
\foreach \i in {0,...,128}
  {\gtile{10/192}{1/2+\i/64}{1/64}{1/64}{0}}
\gtile{7/192}{1/2+1/64}{1/64}{1/64}{0}
\foreach \i in {1,...,127}
  {\gtile{4/192}{1/2+\i/64}{1/64}{1/64}{0}}
\gtile{4/192}{5/2-1/256}{1/64}{3/256}{0}
\gtile{4/192}{5/2}{1/64}{1/256}{0}
\gtile{13/768}{5/2}{1/256}{1/256}{0}
}
\newcommand{\gSineCurve}[3]{
\begin{scope}[cm={#2,0,0,#2,(0,#1-#2/2-#2/2-#2/2-#2/2-#2/2)}]
\SineCurve{#3}
\end{scope}
}

\begin{center}
\beginpgfgraphicnamed{fig1_Sinecurve}

\begin{tikzpicture}[xscale=2.5, yscale=2.5]

\draw[->] [blue] (0,0) -- (4.45,0) node[above] {$x$};
\draw[->] [blue] (0,0) -- (0,2.6) node[right] {$y$};
\foreach \pos in {4/3,7/3,10/3,13/3}
	\draw[shift={(\pos,-1pt)}] (0pt,0pt) -- (0pt, 1pt) node[below] {$\pos$};
\foreach \pos in {7/12,13/12}
	\draw[shift={(\pos,-1pt)}] (0pt,0pt) -- (0pt, 1pt) node[below] {\tiny{$\pos$}};
\foreach \pos in {4/12,10/12}
	\draw[shift={(\pos,-1pt)}] (0pt,0pt) -- (0pt, 1pt) node[below=4pt] {\tiny{$\pos$}};
\foreach \pos in {1/2,3/2,5/2}
         \draw[shift={(-1pt,\pos)}] (0pt,0pt) -- (1pt, 0pt) node[left] {$\pos$};
\foreach \pos in {0,1,2}
         \draw[shift={(-1pt,\pos)}] (0pt,0pt) -- (1pt, 0pt) node[left] {\small{$\pos$}};
\foreach \pos in {1/4,3/4,5/4,7/4,9/4}
         \draw[shift={(-1pt,\pos)}] (0pt,0pt) -- (1pt, 0pt) node[left] {\tiny{$\pos$}};

\begin{scope}[c1/.style={red,thick}, c2/.style={red}, c3/.style={red}]
\SineCurve{0}
\end{scope}
\node [circle,red,inner sep=1pt] (c1n) at (3/2,3/16) {$c_1$};
\draw [very thin,->] (c1n.west) -- (14/12,1/8);

\node [circle,red,inner sep=1pt] (c2n) at (7/12,3/16) {$c_2$};
\draw [very thin,->] (c2n.west) -- (14/48,1/2-1/7);

\node [circle,red,inner sep=1pt] (c3n) at (1/6,3/16) {$c_3$};
\draw [very thin,->] (c3n.north west) -- (14/192,1/2-1/20);

\end{tikzpicture}
\endpgfgraphicnamed

\end{center}
For each $m \ge 1$, let $c_m=T(10/(3 \cdot 4^n), 1/2-1/4^n,1/4^n)$ be the tile in $S(T)$ at the bottom of the $m$th connector, $C_m$.

For any point $y$ in $\R$, and any $a>0$, let $S(y,a,T)$ be the sine curve $S(T)$ scaled (in all directions) by $a$, and translated so that its top left corner is at $(0,y,0)$.

Next, given a tree $\tau$ and a tile $T$, we define a `branch space', $B(T,\tau)$, 
 lying in the rectangular box, $\{(x,y,z)\in\R^3 :0\leq x\leq 13/3, 0\leq y \leq 10/3, 0\leq z\leq 1\}$,
 which is $\bigcup \{ S_s : s \in \tau\}$, where each $S_s$ is defined with the aid of some connecting tiles, $c_s$, and numbers, $y_s$, by induction on the height of $s$, as follows.
\begin{itemize}
\item[Step 1] Let $y_{()}=5/2+5/6=10/3$, and let $S_{()}=S(y_{()},1,T)$ (i.e. the sine-curve defined above based on $T$, translated along the $y$-axis by $5/6$). 

\item[Step 2]  The sine curve, $S_{()}$ has a family of connecting tiles $c_m$. Set $c_{(m)}=c_m$. Let $y_{(m)}=y_{()} - 2/4^0 - 2/4^m$, and let 
$S_{(m)}=S(y_{(m)},\frac{1}{4^m},T)$. Note, critically, that the top--right tile of this sine-curve, $S_{(m)}$, is such that its top edge coincides with the bottom edge of $c_{(m)}$.

\item[Step $n+1$] Fix an $s\in\tau$ with length $n$. We again will have connecting tiles, $c_m$, from the sine-curve $S_{s}$. Set  $c_{s\concat m}=c_m$.
Let $y_{s\concat m}=y_{s} - 2/4^L - 2/4^{L+m}$ where $L=\sum_{i=1}^n s_i$, and let $S_{s\concat m}=S(y_{s\concat m},1/4^{L+m})$. Again note that the top--right tile of $S_{s\concat m}$ has its top edge coinciding with the bottom edge of $c_{s\concat m}$.
\end{itemize}
Assume, for this paragraph only, that $\tau=\tau_c$ is the complete tree, and $T$ is the solid tile. For any $s$ in $\tau$, let $\tau_{s} = \{ s' \in \tau : s'$ extends $s\}$, and let   $B_{s}=\bigcup \{ S_s : s \in \tau_{s}\}$. By construction, $B_{(1)}$ is a $1/4$th copy of $B(T,\tau)=B_{()}$, and $B_{(2)}$ is a $1/16$th copy. It is easy to check that the height (in the $y$-coordinate) of $B_{()}$ is exactly $10/3$. So the height of $B_{(2)}$ is $1/16$th of this, which is $5/24$. The gap between the top edge of $B_{(1)}$ and the top edge of $B_{(2)}$ is $9/24$. Thus $B_{(2)}$ is disjoint from $B_{(1)}$. By self--similarity it follows that $B_{s}$ and $B_{t}$ meet if and only if one of $s$ and $t$ is an immediate successor of the other. This all shows that, for any tree and any tile, $B(T,\tau)$ is well defined, and is the edge connected union of tiles meeting along matching edges.

\tikzset{ptop/.style={}}
\tikzset{p1/.style={}}
\tikzset{p2/.style={}}
\tikzset{p3/.style={}}
\tikzset{p11/.style={}}
\tikzset{p12/.style={}}
\tikzset{p13/.style={}}
\tikzset{p21/.style={}}
\newcommand{\BSpace}[1]{
\begin{scope}[ptop]
\gSineCurve{5/6+5/2}{1}{#1}
\end{scope}
\begin{scope}[p1] 
\gSineCurve{5/6+5/2-2-2/4}{1/4}{#1}
\end{scope}
\begin{scope}[p2]
\gSineCurve{5/6+5/2-2-2/16}{1/16}{0}
\end{scope}
\begin{scope}[p3]
\fill (0,5/6+5/2-2-2/64) rectangle +(13/192,-5/128);
\end{scope}
\begin{scope}[p11] 
\gSineCurve{5/6+5/2-2-2/4-2/4-2/16}{1/16}{0}
\end{scope}
\begin{scope}[p12]
\fill (0,5/6+5/2-2-2/4-2/4-2/64) rectangle +(13/192,-5/128);
\end{scope}
\begin{scope}[p21]
\fill (0,5/6+5/2-2-2/16-2/16-2/64) rectangle +(13/192,-5/128);
\end{scope}
}

\begin{center}
\beginpgfgraphicnamed{fig2_Branch}
\begin{tikzpicture}[xscale=2.5, yscale=2.5]

\fill[fill=yellow!10](0,0) rectangle (13/3,5/2+5/6);
\fill[fill=yellow!20] (0,0) rectangle (13/12,5/6);
\fill[fill=yellow!50] (0,5/6+1/2-1/8-5/24) rectangle (13/48,5/6+1/2-1/8);

\draw[->] [blue] (0,0) -- (4.45,0) node[above] {$x$};
\draw[->] [blue] (0,0) -- (0,3.4) node[right] {$y$};

\foreach \pos in {4/3,7/3,10/3,13/3}
	\draw[shift={(\pos,-1pt)}] (0pt,0pt) -- (0pt, 1pt) node[below] {$\pos$};
\foreach \pos in {7/12,13/12}
	\draw[shift={(\pos,-1pt)}] (0pt,0pt) -- (0pt, 1pt) node[below] {\tiny{$\pos$}};
\foreach \pos in {4/12,10/12}
	\draw[shift={(\pos,-1pt)}] (0pt,0pt) -- (0pt, 1pt) node[below=4pt] {\tiny{$\pos$}};
\foreach \pos in {0,4/3,10/3}
         \draw[shift={(-1pt,\pos)}] (0pt,0pt) -- (1pt, 0pt) node[left] {$\pos$};
\foreach \pos in {5/6, 1,29/24}
         \draw[shift={(-1pt,\pos)}] (0pt,0pt) -- (1pt, 0pt) node[left] {\tiny{$\pos$}};

\begin{scope}[p1/.style={red!75!white}, p2/.style={red}, p3/.style={red!75!black}, p11/.style={red!75!white!50!blue},p12/.style={red!75!white!75!blue}, p21/.style={red!50!blue}]
\BSpace{0}
\end{scope}
\end{tikzpicture}
\endpgfgraphicnamed
\end{center}

We call the point $(0,10/3,0)$ the top left corner of $B(T,\tau)$. For $y$ in $\R$ and $a>0$, let $B(y,a,T,\tau)$ be $B(T,\tau)$ scaled in the $y$-coordinate only by $a$, and translated so its top left corner is at $(0,y,0)$.
\newcommand{\gBSpace}[3]{
\begin{scope}[cm={1,0,0,#2,(0,#1-#2-#2-#2-#2/3)}]
\BSpace{#3}
\end{scope}
}

Now our $K_\tau$ will consist of $\bigcup_{n \ge 0} B_n \cup L \cup S$, where
$B_n=B(y_n,1/2^n, T_{n+1}, \tau)$, for $y_n=7/2^n$, and the  two pieces $L$ and $S$ are defined as follows.

The set $L$ is a homeomorphic copy of the tile $T_0$, bent in the middle so that its base is contained in the $L$-shaped area $\{(x,y,0)\in\R^3 : -2/3\leq x\leq 0, -1\leq y\leq 7$  or $0\leq x\leq \frac{22}{3}, -1\leq y\leq 0\}$
and  the boundary of the base of the tile is the boundary of this area.


The set $S$ is a sine curve variant based on the tile $T_1$, which connects the branch spaces $B_n$, and converges down to the $x$-axis. Concretely, $S=\bigcup_{n \ge 0} (AR_n \cup D_n \cup AL_n \cup C_n)$ where
\begin{eqnarray*}
AR_n&=& \bigcup_{i=0}^{3 \cdot 4^n-1} T_1 ((13/3+i/4^{n},7/2^n), 1/4^n), \\
D_n &=& \bigcup_{i=1}^{3 \cdot 2^n -1} T_1((13/3+3-1/4^n,7/2^n-i/4^n),1/4^n), \\
AL_n&=& \bigcup_{i=1}^{2 \cdot 4^n-2}   T_1((13/3+1+i/4^n,7/2^n +1/4^n-3/2^n),1/4^n,1/4^n) \\
    & & \cup \, T_1((13/3+1,7/2^n +1/4^n-3/2^n),1/4^{n+1},1/4^n) \\
    & & \cup \, T_1((13/3+1+1/4^n,7/2^n+1/4^n-3/2^n),3/4^{n+1},1/4^n),  \quad \text{and} \\
C_n &=& \bigcup_{i=1}^{2^{n+1}} T_1(13/3+1,7/2^{n+1}+i/4^{n+1}, 1/4^{n+1}). 
\end{eqnarray*}

\beginpgfgraphicnamed{fig3_Ktau}
\noindent \begin{tikzpicture}[scale=1.5]
\draw (0,0) -- (0,7) -- (-2/3,7) -- (-2/3,-1) -- (22/3,-1) -- (22/3,0) -- cycle;
\draw[dotted] (-2/3,7) -- (-1/3,-1/2);
\draw[dotted] (0,7) -- (-1/3,-1/2);
\draw[dotted] (22/3,0) -- (-1/3,-1/2);
\draw[dotted] (22/3,-1) -- (-1/3,-1/2);
\draw[fill]  (-1/3,-1/2) circle (0.025);
\fill [fill=yellow!10] (0,7) rectangle ++(13/3,-10/3);

\gBSpace{7}{1}{1}

\fill [fill=yellow!6.66] (0,7/2) rectangle ++(13/3,-5/3);

\gBSpace{7/2}{1/2}{2}

\fill [fill=yellow!4.4444] (0,7/4) rectangle ++(13/3,-5/6);
\gBSpace{7/4}{1/4}{3}

\fill [fill=yellow!2.96296] (0,7/8) rectangle ++(13/3,-5/12);
\gBSpace{7/8}{1/8}{4}

\fill [fill=yellow!1.975] (0,7/16) rectangle ++(13/3,-5/24);

\gtileP{13/3}{7}{1}{1}{1}
\gtileP{13/3+1}{7}{1}{1}{1}
\gtileP{13/3+2}{7}{1}{1}{1}
\foreach \i in {1,...,2}
  {\gtileP{13/3+3-1}{7-\i}{1}{1}{1}}
\gtileP{13/3+1}{7-3+1}{1/4}{1}{1}
\gtileP{13/3+1+1/4}{7-3+1}{3/4}{1}{1}
\foreach \i in {1,...,2}
  {\gtileP{13/3+1}{7/2+\i/4}{1/4}{1/4}{1}}

\foreach \i  in {0,...,11}
  {\gtileP{13/3+\i/4}{7/2}{1/4}{1/4}{1}}
\foreach \i in {1,...,5}
  {\gtileP{13/3+3-1/4}{7/2-\i/4}{1/4}{1/4}{1}}
\gtileP{13/3+1}{7/2+1/4-3/2}{1/16}{1/4}{1}
\gtileP{13/3+1+1/16}{7/2+1/4-3/2}{3/16}{1/4}{1}
\foreach \i in {1,...,6}
  {\gtileP{13/3+1+\i/4}{7/2+1/4-3/2}{1/4}{1/4}{1}}
\foreach \i in {1,...,4}
  {\gtile{13/3+1}{7/4+\i/16}{1/16}{1/16}{1}}

\foreach \i  in {0,...,47}
  {\gtile{13/3+\i/16}{7/4}{1/16}{1/16}{1}}
\foreach \i in {1,...,11}
  {\gtile{13/3+3-1/16}{7/4-\i/16}{1/16}{1/16}{1}}
\gtile{13/3+1}{7/4+1/16-3/4}{1/64}{1/16}{1}
\gtile{13/3+1+1/64}{7/4+1/16-3/4}{3/64}{1/16}{1}
\foreach \i in {1,...,30}
  {\gtile{13/3+1+\i/16}{7/4+1/16-3/4}{1/16}{1/16}{1}}
\foreach \i in {1,...,8}
  {\gtile{13/3+1}{7/8+\i/64}{1/64}{1/64}{0}}

\foreach \i  in {0,...,191}
  {\gtile{13/3+\i/64}{7/8}{1/64}{1/64}{0}}
\foreach \i in {1,...,23}
  {\gtile{13/3+3-1/64}{7/8-\i/64}{1/64}{1/64}{0}}
\foreach \i in {1,...,16}
  {\gtile{13/3+1}{7/16+\i/256}{1/256}{1/256}{0}}

\gtile{13/3+1}{7/8+1/64-3/8}{1/256}{1/64}{0}
\gtile{13/3+1+1/256}{7/8+1/64-3/8}{3/256}{1/64}{0}
\foreach \i in {1,...,126}
  {\gtile{13/3+1+\i/64}{7/8+1/64-3/8}{1/64}{1/64}{0}}

\end{tikzpicture}
\endpgfgraphicnamed

\begin{claim}
$K_\tau$ is a rational continuum.
\end{claim}

\noindent\textit{Proof: } Let $R=\bigcup_n B_n \cup S$. Let $L' =\{(x,y,0)\in\R^3 : x=0, -1\leq y\leq 7\text{ or }-2/3\leq x\leq 22/3, y= 0\}$, be the inner boundary of the base of $L$. 

Since $\cl{R} \subseteq R \cup L'$, $K_\tau$ is clearly compact. Since $L$ and $R$ are connected, and $S$ is a variant topologists sine curve, clearly $K_\tau$ is connected.

For all the points of $K_\tau$ except those on $L'$,
we have a natural neighborhood base at the point for which each element has a countable boundary (which comes from the tile(s) the point is in).

Take any point $\vx$ in $L'$. We suppose now, $\vx=(x_0,0,0)$ (the other case is similar).
Because $B_n$ is based on the tile $T_n$, combined with the fact that the $T_1$s in the connecting sine curve, $S$, have size shrinking to zero, the set $M$ of all $x$-components of the left and right edges of the base of tiles in $R$ is dense in $[0,22/3]$.

Let $U$ be a rectangular neighborhood  of $\vx$ in $\R^3$, and $r_{min}=\min\{x : (x,0,0)\in U\}$ and $r_{max}=\max\{x : (x,0,0)\in U\}$. 
Without loss of generality, if $y_{max}$ is the value of the maximum $y$-component in $U$ then $\set{(x,y,z)\in U}{y=y_{max}}$ do not intersect with any of the 
$B_n$, i.e. the top of $U$ is in between $B_n$ and $B_{n+1}$ for some $n$. 

The set $U\cap L$ includes a neighborhood $N$ of $x$ which has countable boundary. Let $a=\min\{x : (x,0,0)\in N\}$ and $b=\max\{x : (x,0,0)\in N\}$. 
Since $M$ is dense there are sequences $(a_n)_n$ and $(b_n)_n$ in $M$ such that $a_n$ increases to $a$, $b_n$ decreases to $b$, and for each $n$, $r_{min}\leq a_n\leq a<b\leq b_n\leq r_{max}$. 
Let $m_1\geq n$ be such that both of the lines $x=a_1$ and $x=b_1$ intersect the $xy$-projection of $B_{m_1}\cup \{(x,y,z)\in S: y\leq 7/2^{m_1}\}$ along edges of tiles only. Let $S_{n}=\{(x,y,z)\in S: y\leq 7/2^n\}$.
And inductively, let $m_i\geq m_{i-1}$ such that the lines $x=a_i$ and $x=b_i$ intersect the $xy$-projection of $B_{m_i} \cup S_{m_i}$ along edges of tiles only.
Now take 
$N'=\bigcup_i((S_{m_i}\setminus S_{m_{i+1}}\cup\bigcup_{k+m_i<m_{i+1}} B_{m_i+k})\cap\{(x,y,z): a_i\leq x\leq b_i\})$. 

\noindent Here for each $i$, we cut $B_{m_i+k}$ along edges of finitely many tiles, hence the boundary is at most countable. And similarly for $S_{m_i}\setminus S_{m_{i+1}}$, we cut along the edges of finitely many tiles. Thus $N'$ has countable boundary. Moreover, $N\cup N'\subset R$ is a neighborhood of $\vx$ with countable boundary.
\subqed

\medskip

\begin{claim}
If $\tau$ has an infinite branch then $K_\tau$ is $\omega$-sac.
\end{claim}

\noindent\textit{Proof: } Suppose $\tau$ has an infinite branch.
Note that if $T$ is any tile,  then there is a branch of edge connected tiles in $B(T, \tau)$ which converges to a point $\vy_{\tau}$ on the $y$-axis. 

We first show that for any $m \ge 1$, the branch space $B(T_m,\tau)\cup\{\vy_{\tau}\}$ is $\omega$-sac. To do so we only need to check that if $\vy_\tau$ is one of the $n$-points $x_1,\ldots,x_n$ in $B(T_m,\tau)\cup\{\vy_{\tau}\}$, then we can find an arc joining them in that order. 
Suppose $x_k=\vy_\tau$. Then the points $x_1,\ldots,x_{k-1},x_{k+1},\ldots,x_n$ are in some finite family  of edge connected tiles of $B(T_m,\tau)$. Let $t$ be a tile in the branch space $B(T_m,\tau)$ such that none of the $x_i$s is in the tiles to the left and bottom of this tile except for $x_k=\vy_\tau$. Let $\vy_1$ be the bottom left corner of $t$, and $\vy_2$ be the top left corner of $t$. 
Then by Lemma~\ref{edge_conn_T1} there is an arc $\alpha_0$ in $B(T_m,\tau)$ through the points $x_1,\ldots,x_{k-1},\vy_1,\vy_2,x_{k+1},\ldots,x_n$ in the given order. Let $\alpha_1$ be the part of $\alpha_0$ through $x_1,\ldots, x_{k-1},\vy_1$. Let $\beta_1$ be the arc starting at $\vy_1$ and ending at $\vy_\tau$ obtained by traveling along the right and bottom edges of tiles of the branch converging to $\vy_\tau$. Similarly, let $\alpha_2$  be the part of $\alpha_0$ through $y_2,x_{k+1},\ldots, x_{n}$. And let $\beta_2$ be the arc starting at $\vy_\tau$, and following  the left and top edges of tiles of the branch converging to $\vy_\tau$, back to $\vy_2$. Then the arc $\alpha$ obtained by following $\alpha_1$, $\beta_1$, $\beta_2$ and then $\alpha_2$, is the desired arc through the points $x_1,\ldots,x_n$ in the given order.

Back now to $K_\tau$, when $\tau$ has an infinite branch. For each $n$, $B_n$ a has a corresponding branch converging to a point $\vy_{n}$ on the $y$-axis. 
An easy modification of the argument for $B(T_m,\tau)\cup\{\vy_{\tau}\}$ shows that the space $R \cup \{\vy_{n} : n \in \N\}$ is $\omega$-sac.

Since $R \cup \{\vy_n\}_n$ is $\omega$-sac, $(R \cup \{\vy_n\}_n) \cap L = \{\vy_n\}_n$, and  $L$ is  $\omega$-sac$^+$ with respect to discrete sets (Lemma~\ref{tiles_om_sac_plus}, part (i)), it follows from the $\omega$--Gluing Lemma that $K_\tau=R \cup L$ is indeed $\omega$-sac. \subqed

\medskip

\begin{claim}
If $\tau$ has no infinite branch then $K_\tau$ is not $2$-sac.
\end{claim}

\noindent\textit{Proof: }
If $\tau$ does not have any infinite branches, then there are no arcs connecting $L$ to $R$. This is clear because, without infinite branches, any path starting in $R$ and attempting to reach $L$ is forced to travel along a topologist's sine curve variant --- which is impossible.
\subqed

\medskip

\begin{claim}
The map $\tau \mapsto K_\tau$ is continuous.
\end{claim}
\noindent\textit{Proof: } Let $K : \mathbf{Tr} \to \mathcal{C}(\R^3)$ given by $K(\tau)=K_\tau$. Let $s$ be in $\N^{<\N}$, and write $[s]$ for the set of all trees containing $s$. Then $[s]$ is a closed and open subset of $\mathbf{Tr}$. Subbasic open sets in $\mathcal{C}(\R^3)$ are of one of two forms: (i) $\langle U \rangle = \{ C : C \subseteq U\}$ and (ii) $\langle X;V \rangle = \{ C : C \cap V \ne \emptyset\}$, where $U$ and $V$ are open subsets of $\R^3$. We show inverse images under $K$ of both types of subbasic open set are open in $\mathbf{Tr}$, thus confirming continuity of the map $\tau \mapsto K_\tau$.

For subbasic sets of type (ii), the sets $V$ may be taken to come from any basis for $\R^3$; we will take for $V$ open balls in $\R^3$ which either meet, or have closure disjoint from, $L \cup S$. Fix such a $V$. If $V$ meets $L \cup S$, then $K^{-1} \langle X;V\rangle = \mathbf{Tr}$. If the closure of $V$ is disjoint from $L$, then for any tree $\tau$, $V$ meets only finitely many $B_n(\tau)$, and in each of these branch spaces, meets only finitely many sine curves. Suppose $V$ meets sine curves labeled by $s_1, \ldots , s_k$. Then $K^{-1} \langle X;V\rangle = \bigcup \{ [s_i] : 1 \le i \le k\}$, which is open (each $[s_i]$ is open).

For subbasic sets of type (i), if $L \cup S$ is not contained in $U$, then $K^{-1} \langle U\rangle = \emptyset$. So suppose, $L \cup S \subseteq U$. Let $\tau_c$ be the complete tree. Then all but finitely many of the sine curves making up the $B_n(\tau_c)$s are contained in $U$. Let them be labeled by $s_1, \ldots , s_k$. Then $K^{-1} \langle U\rangle = \mathcal{C}(\R^3) \setminus \bigcup \{ [s_i] : 1 \le i \le k\}$, which is open (each $[s_i]$ is closed).
\subqed

\medskip

Claims 1--4 show that $\tau \mapsto K_\tau$ is as required --- continuous, and $K_\tau$ is a rational continuum which is $\omega$-sac if $\tau$ has an infinite branch, but is not even $2$-sac when $\tau$ has no infinite branches.

\bigskip

We now turn to the case for $R_{n, \neg (n+1)}$. To start fix $n \ge 2$. Select $n-2$ points $a_1, \ldots, a_{n-2}$ from the interior of the right hand edge of the base of $T_0$. Similarly to the definition of $T_1$, shrink $T_0$ while keeping fixed the set  $\{a_1, \ldots, a_{n-2}\}$ and the top edge of the base. This gives a tile $\widehat{T}_n$. Now consider the map $\tau \mapsto K^n_{\tau}$ where $K^n_{\tau}$ is $K_\tau$ along with the tile $\widehat{T}_n(22/3,0,1)$. Then it is easy to see (given our previous work) that  $K^n_{\tau}$ is a rational continuum and the map $\tau \mapsto K^n_{\tau}$ is continuous. Because the extra tile, $\widehat{T}_n(22/3,0,1)$, meets the rest of $K^n_{\tau}$ in exactly $n-1$ points (namely $a_1, \ldots, a_{n-2}$ and the topleft corner of the base of the tile), $K^n_{\tau}$ is never $(n+1)$-sac. When $\tau$ has no infinite branch, then $K^n_{\tau}$ is not $2$-sac, so definitely not in $R_{n, \neg (n+1)}$. But when $\tau$ has an infinite branch, both  $\widehat{T}_n(22/3,0,1)$ and the rest of $K^n_{\tau}$ are $\omega$-sac, and (again) meet in $n-1$ points --- so by Lemma~\ref{finite_gluing} is $n$-sac. 
\end{proof}

By definition of $\omega$-sac, the set of all $\omega$-sac curves is a $\piset{2}{1}$ set. It turns out there is no simpler characterization for being $\omega$-sac. Given the previous theorem, the proof of the following result is very similar to that of Becker and (independently) Ajtai that the set of arc connected subcontinua of $\R^3$ is $\mathbf{\Pi}_2^1$-complete  (see \cite{kech} 37.11). Consequently we give just a sketch of the proof, highlighting the differences.

\begin{theorem}
The sets $S_n$ of $n$-sac curves, for all $n\geq 2$ and $n=\omega$, are  $\piset{2}{1}$-complete subsets of the space $\mathcal{C}(\R^N)$, where $N\geq 4$.
\end{theorem}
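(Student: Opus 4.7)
The upper bound is immediate from the definition of $n$-sac: the statement ``for every $n$-tuple of distinct points in $X$, there exists an arc in $X$ visiting them in order'' has the form $\forall u \in X^n\, \exists \alpha \in \mathcal{C}(X)\, \theta(u, \alpha, X)$ with Borel matrix $\theta$. Both quantifiers range over Polish spaces, placing $S_n$ in $\piset{2}{1}$. The set $S_\omega = \bigcap_{n \ge 2} S_n$ is then a countable intersection of $\piset{2}{1}$ sets, hence also $\piset{2}{1}$.

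For $\piset{2}{1}$-hardness, the plan is to reduce from the standard $\piset{2}{1}$-complete set
\[ P = \left\{T \in \mathbf{Tr}(2\times\N) : \forall x \in 2^{\N},\ T_x \in \mathbf{IF}\right\}, \]
where $T_x = \{s \in \N^{<\N} : (x\restrict |s|, s) \in T\}$ denotes the $x$-section. Following Becker and Ajtai, we construct a continuous map $T \mapsto K_T \in \mathcal{C}(\R^4)$ such that $K_T$ is $\omega$-sac if $T \in P$ and is not $2$-sac otherwise; this single reduction yields $\piset{2}{1}$-hardness of every $S_n$ (for $n\ge 2$) and of $S_\omega$ simultaneously.

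The construction is a parametric version, carried out in $\R^4$, of the one in Theorem~\ref{rationalsac}. Fix a homeomorphism $\pi \colon C \to 2^{\N}$, with $C \subseteq [0,1]$ the standard Cantor set. The obstruction piece $L$ of $K_\tau$ is placed in $\R^3 \times \{0\}$, while the tree-dependent branch region is distributed over $\R^3 \times C$: the slice $K_T \cap (\R^3 \times \{y\})$ consists essentially of the branch-space portion of $K_{T_{\pi(y)}}$, arranged to vary continuously in $y$, and the converging sine curves attaching the branch spaces to $L$ are swept smoothly across $C$. One verifies that $K_T$ is compact, connected, and rational, and that $T \mapsto K_T$ is continuous, exactly as in Claim~4 of the proof of Theorem~\ref{rationalsac}.

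The crux is the dichotomy ``$K_T$ is $\omega$-sac iff $T \in P$''. When every section $T_x$ is ill-founded, slice-by-slice $\omega$-gluing (Lemma~\ref{omega_gluing}), combined with the extra freedom the fourth coordinate provides to move between slices off the piece $L$, shows $K_T$ is $\omega$-sac, following the strategy of Claim~2 of Theorem~\ref{rationalsac}. The main obstacle is the converse: if some $T_{x_0}$ is well-founded, we must show that the topologist's-sine-curve obstruction in the slice at $\pi^{-1}(x_0)$ persists globally, so that no arc in $K_T$ joins a branch-region point of that slice to a point of $L$. In four dimensions an arc might attempt to bypass the bad slice by detouring through neighbouring slices, so the decisive design step is to arrange the fourth-coordinate sweep itself as a topologist's-sine-curve, so that any detour across the Cantor parameter space $C$ is forced through an infinite oscillation which no arc can traverse. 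This parallels the final argument of Becker/Ajtai in \cite{kech}~37.11 and gives that $K_T$ is not $2$-sac when $T \notin P$, completing the reduction.
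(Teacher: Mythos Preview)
Your high-level strategy---reduce from the $\piset{2}{1}$-complete set of trees all of whose sections are ill-founded, using the $K_\tau$ construction of Theorem~\ref{rationalsac} fiberwise over a Cantor set in the fourth coordinate---matches the paper's. The upper bound argument is fine. But your architecture for the reduction differs from the paper's in a way that creates the very difficulty you then struggle with.

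You place a \emph{single} copy of the obstruction piece $L$ in $\R^3\times\{0\}$ and spread only the branch regions across $\R^3\times C$, then worry that an arc might escape a bad slice by detouring through neighbouring slices, and propose an additional sine-curve sweep in the fourth coordinate to block this. That fix is underspecified and is in tension with your own requirement of ``extra freedom the fourth coordinate provides to move between slices'' in the $\omega$-sac case: either slices communicate off $L$ (and then bad slices can be bypassed) or they do not (and then the cross-slice freedom you invoke for $\omega$-sac is unavailable).

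The paper avoids this entirely. It places the \emph{full} space $K_{\tau(x,y)}$---including its own $L$-piece---in the slice $d=y$, and then glues all the slices together by attaching a Menger cube $M$ along the outer edge of each $L$-piece only. Distinct slices therefore meet nowhere except through $M$, and $M$ touches only the $L$-side of every slice. Consequently the ``not 2-sac'' direction is immediate: if some section $\tau(x,y_0)$ is well-founded, the $R$-part of that slice cannot reach its own $L$-piece (by Claim~3 of Theorem~\ref{rationalsac}), hence cannot reach $M$, hence cannot reach anything else in $P_x$. No bypass argument is needed. For the $\omega$-sac direction, the Menger cube (itself $\omega$-sac) supplies all the cross-slice routing. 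So the fix for your proposal is simply to keep a copy of $L$ in every slice and connect on the $L$-side; your single-$L$ design is an unnecessary complication.
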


\begin{proof}
We prove the claim, for all $n$ and $N$ simultaneously, by proving that there is a continuous map $\Phi$ from the space $\N^\N$ into the space $\mathcal{C}(\R^4)$ such that: given a $\mathbf{\Pi}_2^1$ set $A\subset\N^\N$, if $x\in A$ then $\Phi(x)=P_x$ is a curve which is not arc connected (i.e. $2$-sac), while if $x\in A$, then $P_x$ is an $\omega$-sac curve.

Let $A$ be a $\piset{2}{1}$ subset of $\N^\N$ and $B$ be a $\mathbf{\Sigma}_1^1$ subset of $\N^\N\times 2^\N$ with $x\in A$ if and only if for each $y\in 2^\N$, $(x,y)\in B$.
Now let $\tau$ be a tree on $\N\times 2\times \N$ with 
$B=\{(x,y) : \exists z\in\N^\N (x,y,z)\text{ is a branch of } \tau\}=\{(x,y) : \tau(x,y)\in\mathbf{IF}\}$. Recall that $\tau(x,y)=\{s:(x\restrict \mathop{length}(s),y\restrict \mathop{length}(s),s)\in\tau\}$ is a tree on $\N$.

Now for each $x\in\N^\N$, we will construct a curve $P_x\subset\R^4$ as follows.
 First, we identify the Cantor space $2^\N\subset\N^\N$ with the standard Cantor set in $[0,1]$. 
Then for each $y\in 2^\N$, let $L_{x,y}=K_{\tau(x,y)}$, as described in Theorem~\ref{rationalsac}, placed in the cube $\{(a,b,c,d) : -\frac{2}{3}\leq a\leq \frac{22}{3}, -1\leq b\leq 7, c\geq 0, d=y\}$. Thus the outside edges of the tile $L$ in $K_{\tau(x,y)}$ is on $a=-\frac{2}{3}$ or $b=-1$. 
Let $P_x=\bigcup_{y\in 2^\N}L_{x,y}\cup M$.
Here we have  connected the continua $L_{x,y}$ along the edges on $a=-\frac{2}{3}$ by adding the curve $M$, which is a copy of the Menger cube scaled and translated inside the cube $\{(a,b,c,d) : -2\leq a\leq -\frac{2}{3},-1\leq b\leq 7,c=0,0\leq d\leq 1\}$.
 
Then, $P_x$ is a curve  and the map $x\mapsto P_x$ from $\N^\N\rightarrow \mathcal{C}(\R^4)$ is continuous. Moreover, $x\in A$ if and only if for each $y\in 2^\N$, $\tau(x,y)\in\mathbf{IF}$. Thus if $x\notin A$, then there is $y\in 2^\N$ with $\tau(x,y)\notin \mathbf{IF}$, so the corresponding rational $\omega$-sac continua $L_{x,y}=K_{\tau(x,y)}$ is not 2-sac, hence the union $P_x=M\cup\bigcup_{y\in 2^\N} L_{x,y}$ is also not 2-sac. 

On the other hand, if $x\in A$, then for each $y\in 2^\N$, $L_{x,y}$ is $\omega$-sac by Theorem~\ref{rationalsac}. It is straightforward to check that the Menger cube $M$ is $\omega$-sac. The proof that $P_x$ is $\omega$-sac is now straightforward given the techniques developed for Theorem~\ref{rationalsac}.
\end{proof}

Since the set of $\omega$-sac curves is $\mathbf{\Pi}_2^1$-complete it is not a $\mathbf{\Sigma}_1^1$-set (i.e. an analytic set). From which the next corollary easily follows.
\begin{cor}
There is no mapping universal for the class of all $\omega$-sac curves. More generally, given any countable family of $\omega$-sac curves, $K_n$ for $n$ in $\N$, there is an $\omega$-sac curve $L$ which is not the continuous image of any $K_n$.
\end{cor}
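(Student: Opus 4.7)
The plan is a proof by contradiction that leverages the preceding theorem: the set of $\omega$-sac curves in $\mathcal{C}(\R^4)$ is $\piset{2}{1}$-complete, and hence not $\sigmaset{1}{1}$ (as already noted in the sentence immediately preceding the corollary). The statement about a single universal mapping is the special case of the generalization with $K_n = K$ for every $n$, so it will suffice to derive a contradiction from the assumption that \emph{every} $\omega$-sac curve is a continuous image of some $K_n$. (Notice that the argument will not actually use the hypothesis that the $K_n$ are themselves $\omega$-sac, only that they are compact metric.)

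The key ingredient to invoke is the continuous map $\Phi \colon \N^\N \to \mathcal{C}(\R^4)$, $\Phi(x) = P_x$, constructed in the proof of the preceding theorem, together with a $\piset{2}{1}$-complete set $A \subseteq \N^\N$ such that $x \in A$ if and only if $P_x$ is $\omega$-sac.

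The main routine step is that, for each $n$, the set
\[
T_n := \bigl\{\, x \in \N^\N : \exists\, f \in C(K_n,\R^4) \text{ with } f(K_n) = P_x \,\bigr\}
\]
is analytic. Indeed, $C(K_n,\R^4)$ (with the supremum metric) is Polish since $K_n$ is compact metric; the evaluation $f \mapsto f(K_n)$ from $C(K_n,\R^4)$ into $\mathcal{C}(\R^4)$ is continuous; and the diagonal of $\mathcal{C}(\R^4) \times \mathcal{C}(\R^4)$ is closed. So $\{(x,f) : f(K_n) = P_x\}$ is closed in the Polish product $\N^\N \times C(K_n,\R^4)$, and $T_n$ is its projection onto $\N^\N$, hence $\sigmaset{1}{1}$.

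Under the contradictory hypothesis, $x \in A$ iff $P_x$ is $\omega$-sac iff $P_x$ is a continuous image of some $K_n$ iff $x \in \bigcup_n T_n$. Thus $A = \bigcup_n T_n$ is a countable union of analytic sets, hence analytic, contradicting the non-analyticity of $A$ noted above. Any $x \in A \setminus \bigcup_n T_n$ then yields a witness $L = P_x$ to the corollary. The only step that requires a small amount of care is the analyticity of $T_n$, which is just the standard projection argument; everything else is an immediate appeal to the preceding theorem, so I expect no real obstacle.
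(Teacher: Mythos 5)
There is a genuine gap, and your own parenthetical remark flags it: you claim the argument does not use that the $K_n$ are $\omega$-sac, but without some hypothesis on the $K_n$ the statement you would be proving is simply false --- take $K_1$ to be the Cantor set (or any family containing it): \emph{every} compact metric space, in particular every $\omega$-sac curve, is a continuous image of the Cantor set, so no witness $L$ can exist. Concretely, the flaw is in the chain ``$x\in A$ iff $P_x$ is $\omega$-sac iff $P_x$ is a continuous image of some $K_n$ iff $x\in\bigcup_n T_n$'': the hypothesis being contradicted only gives the forward direction of the middle biconditional ($\omega$-sac $\Rightarrow$ image of some $K_n$), i.e.\ only $A\subseteq\bigcup_n T_n$. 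A continuous image of $K_n$ need not be $\omega$-sac, so the reverse inclusion does not follow, and a non-analytic set can perfectly well be contained in an analytic one (indeed, in the Cantor-set example $\bigcup_n T_n=\N^\N$), so no contradiction is reached.

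The repair does use that each $K_n$ is $\omega$-sac, and also the dichotomy built into the reduction of Theorem~\ref{rationalsac}: if $x\notin A$ then $P_x$ is not even $2$-sac. Since each $K_n$ is $2$-sac, hence arc connected, any continuous image $f(K_n)$ is a path-connected metric continuum and therefore arc connected; so if $x\in T_n$ then $P_x$ is arc connected, which by the dichotomy forces $x\in A$. Thus $\bigcup_n T_n\subseteq A$, and together with your inclusion $A\subseteq\bigcup_n T_n$ (from the assumed universality) one gets $A=\bigcup_n T_n$, an analytic set, contradicting the fact that the $\piset{2}{1}$-complete set $A$ is not $\sigmaset{1}{1}$. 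With this one step inserted your argument is correct and is the natural way to make precise the paper's ``easily follows'': your projection argument for the analyticity of each $T_n$ is fine, and pulling back through $\Phi$ (rather than working directly with the set of $\omega$-sac curves in $\mathcal{C}(\R^4)$, where the image-of-$K_n$ condition does not characterize $\omega$-sac-ness) is exactly the right move.
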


\section*{Open Problems} Our results raise a number of questions about optimality. For a space $X$, define the \emph{sac number} of $X$, denoted $\mathop{sac}(X)$, to be  the maximal $n$ such that $X$ is $n$-sac, or $\infty$ if $X$ is $\omega$-sac. 
\begin{itemize}
\item What is the sac number of the $N$-trix? We know $\mathop{sac}(3\text{-trix})=\mathop{sac}(4\text{-trix})=3$ but $\mathop{sac}(5\text{-trix})=5$, and in general $\mathop{sac}(N\text{-trix}) \le N$.
\item What is the sac number of a closed disk with $N$-handles? What is the sac number of (compact) $2$-manifolds? What is the sac number of finite simplicial $2$-complexes?
\end{itemize} 

We have shown that the $3$-sac graphs can be simply characterized, but (for $n \ge 2$ or $n=\omega$) there is no simple characterization of $n$-sac rational continua.
\begin{itemize}
\item Characterize (simply) the regular $n$-sac curves, or prove that no such characterization is possible.
\end{itemize}

The $n$-sac property is a very natural strengthening of arc connectedness. Whenever a space is known to be arc connected (i.e. $2$-sac) we are lead to ask for which $n$ it is $n$-sac. For example the hyperspaces $2^X$ and $\mathcal{C}(X)$ of compact subsets and subcontinua of a continuum $X$ are well known to be arc connected.
\begin{itemize}
\item When is $2^X$ or $\mathcal{C}(X)$ $n$-sac or $\omega$-sac? We know that for hereditarily indecomposable $X$ the space of subcontinua is not $3$-sac. 
\item What about when we restrict to, say, locally connected continua?
\item Are there, for each $n$, continua which are $n$-sac but not $(n+1)$-sac? 
\end{itemize}

\end{document}